\documentclass[12pt]{amsart}
\usepackage{amsfonts, amsmath, amsthm}

\usepackage{graphicx}
\usepackage{epstopdf}
\usepackage{psfrag}
%\usepackage{pdfsync}

%\fontsize {11pt}{13pt}
%\oddsidemargin = .27in
%\evensidemargin = .27in
%\textwidth = 4.7in
%\textheight = 7.5in
%\hoffset = 0pt
%\renewcommand{\qedsymbol}{q.e.d.}

\newtheorem{pro}{Proposition}[section]
\newtheorem{thm}[pro]{Theorem}
\newtheorem{lem}[pro]{Lemma}
\newtheorem{clm}[pro]{Claim}

\newtheorem{cor}[pro]{Corollary}
\newtheorem{quest}[pro]{Question}

\theoremstyle{definition}
\newtheorem{dfn}[pro]{Definition}

\theoremstyle{remark}

\newcommand{\VV}{\mathcal V}
\newcommand{\WW}{\mathcal W}
\newcommand{\CC}{\mathcal C}

\newcommand{\gen}{\mbox{\rm genus}}

\newcommand{\bdy}{\partial}

\newcommand{\thick}[1]{{\rm Thick}(#1)}
\newcommand{\thin}[1]{{\rm Thin}(#1)}

\newcommand{\amlg}[1]{\mathcal A(#1)}

\newcommand{\CV}{\mathcal V}
\newcommand{\CW}{\mathcal W}

\title{Stabilizing and destabilizing Heegaard splittings of sufficiently complicated 3-manifolds}
\date{\today}
\address{Pitzer College}
\email{bachman@pitzer.edu}
\author{David Bachman}
\begin{document}

\begin{abstract}
Let $M_1$ and $M_2$ be compact, orientable 3-manifolds with incompressible boundary, and  $M$ the manifold obtained by gluing with a homeomorphism $\phi:\bdy M_1 \to \bdy M_2$. We analyze the relationship between the sets of low genus Heegaard splittings of $M_1$, $M_2$, and $M$, assuming the map $\phi$ is ``sufficiently complicated." This analysis yields counter-examples to the Stabilization Conjecture, a resolution of the higher genus analogue of a conjecture of Gordon, and a result about the uniqueness of expressions of Heegaard splittings as amalgamations.
\end{abstract}

\maketitle

\markright{HEEGAARD SPLITTINGS OF SUFFICIENTLY COMPLICATED 3-MANIFOLDS}

\section{Introduction}

Suppose $M_1$ and $M_2$ are 3-manifolds with incompressible, homeomorphic boundary components, $F_i \subset \bdy M_i$. Let $M$ be the 3-manifold obtained from these manifolds by gluing via the map $\phi: F_1 \to F_2$.  A fundamental question is to determine the extent to which the set of Heegaard splittings of $M$ is determined by the sets of Heegaard splittings of $M_1$ and $M_2$. In this paper we analyze the relationships between  these sets, assuming the gluing map $\phi$ is ``sufficiently complicated," with respect to some measure of complexity, and the splittings under consideration are ``low genus," in relation to this complexity. 

The main technical idea from which our results follow is that when $M_1$ is glued to $M_2$ by a complicated homeomorphism, the surface $F$ at their interface becomes a ``barrier" to all low genus {\it topologically minimal} surfaces. Such surfaces are the topological analogue of geometrically minimal surfaces, and the intuition behind why  $F$ should be a barrier to such surfaces comes from geometry. As the gluing map between $M_1$ and $M_2$ becomes more and more complicated, we see a longer and longer region of $M_1 \cup _\phi M_2$ which is homeomorphic to $F \times I$. Any minimal surface which passes through such a region must have large area, and thus by the Gauss-Bonnet theorem have large genus. Lackenby used precisely this idea to study the behavior of Heegaard genus under complicated gluings \cite{lackenby:04}. 

Examples of topologically minimal surfaces include {\it incompressible}, {\it strongly irreducible}, and {\it criticial} surfaces. Knowing that the surface $F$ is a barrier to these three types of surfaces  implies that all stabilizations, destabilizations, and isotopies of Heegaard splittings of $M_1 \cup _\phi M_2$ happen away from $F$, i.e.~in either $M_1$ or $M_2$. In other words, the relationships between the low genus Heegaard splittings of $M_1 \cup_\phi M_2$ are completely determined by the relationships between Heegaard splittings of $M_1$ and $M_2$. From this, we obtain counter-examples to the Stabilization Conjecture, a resolution of the higher genus analogue of a conjecture of Gordon, and a result about the uniqueness of expressions of Heegaard splittings as amalgamations. We describe each of these presently.

\subsection{The Stabilization Conjecture.} Given a Heegaard surface $H$ in a 3-manifold, $M$, one can {\it stabilize} to obtain a splitting of higher genus by taking the connected sum of $H$ with the genus one splitting of $S^3$. Suppose $H_1$ and $H_2$ are Heegaard surfaces in $M$, where $\rm{genus}(H_1) \ge \rm{genus}(H_2)$. It is a classical result of Reidemeister \cite{reidemeister} and Singer \cite{singer} from 1933 that as long as $H_1$ and $H_2$ induce the same partition of the components of $\partial M$, stabilizing $H_1$ some number of times produces a stabilization of $H_2$. Just one stabilization was proved to be always sufficient in large classes of 3-manifolds, including Seifert fibered spaces \cite{schultens:96}, most genus two 3-manifolds \cite{rs:99}, \cite{bs:11},  and most graph manifolds \cite{Derby-Talbot2006} (see also \cite{sedgwick:97}). The lack of examples to the contrary led to ``The Stabilization Conjecture": Any pair of Heegaard splittings requires at most one stabilization to become equivalent. (See Conjecture 7.4 in \cite{ScharlemannSurvey}.)

In this paper we produce several families of counter-examples to the Stabilization Conjecture (see Section \ref{s:CounterExamples}). This work was announced in December of 2007 at a Workshop on {\it Triangulations, Heegaard Splittings, and Hyperbolic Geometry}, at the American Institute of Mathematics. At the same conference another family of counter-examples to the Stabilization Conjecture was announced by Hass, Thompson, and Thurston \cite{htt:09}. Their construction uses mainly geometric techniques. Several months later Johnson announced still more counter-examples \cite{johnson2}. The key to the constructions of the counter-examples given in \cite{htt:09} and \cite{johnson2}  is to use Heegaard splittings formed by gluing together two handlebodies by a very complicated homeomorphism. The construction here is quite different, and our techniques lead to the resolution of several other questions about Heegaard splittings, as described below.

\subsection{The higher genus analogue of Gordon's Conjecture.} In Problem 3.91 of \cite{kirby:97}, Cameron Gordon conjectured that the connected sum of  unstabilized Heegaard splittings is unstabilized. This was proved by the author in \cite{gordon}, and by Scharlemann and Qiu in \cite{sq:09}. 

Given Heegaard surfaces $H_i \subset M_i$, Schultens gave a construction of a Heegaard surface in $M=M_1 \cup _{\phi} M_2$, called their {\it amalgamation} \cite{schultens:93}. Using this terminology, we can phrase the higher genus analogue of Gordon's conjecture:

\begin{quest}
\label{c:IsotopyConjecture}
Let $M_1$ and $M_2$ denote compact, orientable, irreducible 3-manifolds with homeomorphic, incompressible boundary. Let $M$ be the 3-manifold obtained from $M_1$ and $M_2$ by gluing a component of each of their boundaries by some homeomorphism. Let $H_i$ be an unstabilized Heegaard surface in $M_i$. Is the amalgamation of $H_1$ and $H_2$ in $M$ unstabilized? 
\end{quest}

As stated, Schultens and Weidmann have shown the answer to this question is no \cite{SchultensWeidmann}. In light of their examples we refine the question by adding the hypothesis that the gluing map between $M_1$ and $M_2$ is  ``sufficiently complicated," in some suitable sense. We will postpone a precise definition of this term to Section \ref{s:DistanceBoundSection}. However, throughout this paper it will be used in such a way so that if $\psi:F^1 \to F^2$ is a fixed homeomorphism, then for each pseudo-Anosov map $\phi:F^2 \to F^2$, there exists an $N$ so that for each $n \ge N$, $\psi^{-1}\phi^n \psi$ is sufficiently complicated. Unfortunately, the assumption that the gluing map of Question \ref{c:IsotopyConjecture} is sufficiently complicated is still not a strong enough hypothesis to insure the answer is yes, as the following construction shows.

If $\bdy M \ne \emptyset$, then one can {\it boundary-stabilize} a Heegaard surface in $M$ by tubing a copy of a component of $\bdy M$ to it \cite{moriah:02}.  Let $M_1$ be a manifold that has a boundary component $F$, and an unstabilized Heegaard surface $H_1$ that has been obtained by boundary-stabilizing some other Heegaard surface along $F$. (See \cite{sedgwick:01} or \cite{ms:04} for such examples.) Let $M_2$ be a manifold with a boundary component homeomorphic to $F$, and a {\it $\gamma$-primitive} Heegaard surface (see \cite{moriah:02}). Such a Heegaard surface is unstabilized, but has the property that boundary-stabilizing it along $F$ produces a stabilized Heegaard surface. Then no matter how we glue $M_1$ to $M_2$ along $F$, the amalgamation of $H_1$ and $H_2$ will be stabilized. 

Given this example, and those of Schultens and Weidmann, we deduce the following: In order for the answer to Question \ref{c:IsotopyConjecture} to be yes, we would at least have to know that $H_1$ and $H_2$ are not stabilized, not boundary-stabilized, and that the gluing map is sufficiently complicated. We prove here that these hypotheses are enough to obtain the desired result:

\medskip

\noindent {\bf Theorem \ref{t:HigherGenusGordon}. }{\it
Let $M_1$ and $M_2$ be compact, orientable, irreducible 3-manifolds with incompressible boundary, neither of which is an $I$-bundle. Let $M$ denote the manifold obtained by gluing some component $F$ of $\bdy M_1$ to some component of $\bdy M_2$ by some homeomorphism $\phi$. Let $H_i$ be an unstabilized, boundary-unstabilized Heegaard surface in $M_i$. If $\phi$ is sufficiently complicated then the amalgamation of $H_1$ and $H_2$ in $M$ is unstabilized.}

\medskip

This result allows us to construct the first example of a non-minimal genus Heegaard surface which has Hempel distance \cite{hempel:01} exactly one.  The first examples of minimal genus, distance one Heegaard surfaces were found by Lustig and Moriah in 1999  \cite{moriah:99}. Since then the existence of non-minimal genus examples was expected, but a construction remained elusive. This is why Moriah has called the search for such examples the ``nemesis of Heegaard splittings" \cite{moriah}.  In Corollary \ref{c:Moriah} we produce manifolds that have an arbitrarily large number of such splittings.

\subsection{The uniqueness of amalgamations.} The conclusion of Theorem \ref{t:HigherGenusGordon} asserts that each pair of low genus, unstabilized, boundary-unstabilized surfaces in $M_1$ and $M_2$ determines an unstabilized surface in $M_1 \cup _\phi M_2$. We now discuss the converse of this statement. Lackenby \cite{lackenby:04}, Souto \cite{souto}, and Li \cite{li:08} have independently shown that when $\phi$ is sufficiently complicated, then any low genus Heegaard surface $H$ in $M_1 \cup _\phi M_2$ is an amalgamation of Heegaard surfaces $H_i$ in $M_i$. In Theorem \ref{t:AmalgamationExists} we prove a slight refinement of this result:

\medskip

\noindent {\bf Theorem \ref{t:AmalgamationExists}. }{\it
Let $M_1$ and $M_2$ be compact, orientable, irreducible 3-manifolds with incompressible boundary, neither of which is an $I$-bundle. Let $M$ denote the manifold obtained by gluing some component $F$ of $\bdy M_1$ to some component of $\bdy M_2$ by some homeomorphism $\phi$. If $\phi$ is sufficiently complicated then any low genus, unstabilized Heegaard surface in $M$ is an amalgamation of unstabilized, boundary-unstabilized Heegaard surfaces in $M_1$ and $M_2$, and possibly a Type II splitting of $F \times I$.}

\medskip

Here a Type II splitting of $F \times I$ consists of two copies of $F$ connected by an unknotted tube (see \cite{st:93}). Suppose, as in the theorem above, that $F$ is a boundary component of $M_1$, and $H_1$ is a Heegaard surface in $M_1$. If we glue $F \times I$ to $\bdy M_1$, and amalgamate $H_1$ with a Type II splitting of $F \times I$, then the result is the same as if we had just boundary-stabilized $H_1$. 

Ideally, we would like to say that the Heegaard surfaces in $M_i$ given by Theorem \ref{t:AmalgamationExists} are uniquely determined by the Heegaard surface in $M$ from which they come. However, no matter how complicated $\phi$ is this may not be the case, as the following construction shows.

Let $M_1$ be a 3-manifold with boundary homeomorphic to a surface $F$, that has inequivalent unstabilized, boundary-unstabilized Heegaard surfaces $H_1$ and $G_1$ that become equivalent after a boundary-stabilization. (For example, $M_1$ may be a Seifert fibered space with a single boundary component. {\it Vertical} splittings $H_1$ and $G_1$ would then be equivalent after a boundary stabilization, by \cite{schultens:96}.) Let $M_2$ be any 3-manifold with boundary homeomorphic to $F$, and let $H_2$ be an unstabilized, boundary-unstabilized Heegaard surface in $M_2$. Glue $M_1$ to $M_2$ by any map $\phi$ to create the manifold $M$. Let $H$ be the amalgamation of $H_1$, $H_2$, and a Type II splitting of $F \times I$. Then $H$ is also the amalgamation of $G_1$, $H_2$ and a Type II splitting of $F \times I$. So the expression of $H$ as an amalagamation as described by the conclusion of Theorem \ref{t:AmalgamationExists} is not unique. 

This construction shows that Type II splittings of $F \times I$ are obstructions to the uniqueness of the decomposition given by Theorem \ref{t:AmalgamationExists}. In our final theorem, we show that this is the only obstruction:

\medskip

\noindent {\bf Theorem \ref{t:HighGenusGordonIsotopy}. }{\it
Let $M_1$ and $M_2$ be compact, orientable, irreducible 3-manifolds with incompressible boundary, neither of which is an $I$-bundle. Let $M$ denote the manifold obtained by gluing some component $F$ of $\bdy M_1$ to some component of $\bdy M_2$ by some homeomorphism $\phi$. Suppose $\phi$ is sufficiently complicated, and some low genus Heegaard surface  $H$ in $M$ can be expressed as an amalgamation of unstabilized, boundary-unstabilized Heegaard surfaces in $M_1$ and $M_2$. Then this expression is unique.}

\medskip

This paper is organized as follows. In Section \ref{s:FirstDefSection} we review the definitions of {\it incompressible}, {\it strongly irreducible}, and {\it critical} surfaces. In Section \ref{s:TopMinSfcs} we review a generalization of these three types of surfaces, the so-called {\it topologically minimal surfaces} of \cite{TopIndexI}. In Section \ref{s:IndexRelBdy} we further generalize this to consider surfaces that are topologically minimal {\it with respect to the boundary} of a manifold, and show how such surfaces can be obtained from topologically minimal ones. This allows us, in Section \ref{s:DistanceBoundSection}, to prove that when two 3-manifolds are glued together by a complicated map, it creates a ``barrier" to all low genus, topologically minimal surfaces. In particular, by gluing component manifolds together we create surfaces which act as obstructions to certain low genus incompressible, strongly irreducible, and critical surfaces. Such surfaces are thus dubbed {\it barrier surfaces}. In Section \ref{s:GHS} through \ref{s:LastDefSection} we mostly review the definitions and results given in \cite{gordon}. These include  {\it Generalized Heegaard splittings} (GHSs) and {\it Sequences of GHSs} (SOGs). We also define the {\it genus} of such objects, and establish several results about low genus GHSs and SOGs of manifolds that contain barrier surfaces. Section \ref{s:CounterExamples} contains our constructions of counter-examples to the Stabilization Conjecture. Sections \ref{s:Amalgamations} through \ref{s:Isotopy} contain the proofs of Theorems \ref{t:HigherGenusGordon}, \ref{t:AmalgamationExists}, and \ref{t:HighGenusGordonIsotopy}, mentioned above. 

The author thanks Tao Li for helpful conversations regarding his paper, \cite{li:08}, on which Section \ref{s:DistanceBoundSection} is based. Comments from Saul Schleimer, Ryan Derby-Talbot, and Sangyop Lee were also very helpful. Finally, the author thanks the referee for doing a careful reading and providing many helpful comments.

\section{Incompressible, Strongly Irreducible, and Critical surfaces}
\label{s:FirstDefSection}

In this section we recall the definitions of various classes of topologically interesting surfaces. The first are the {\it incompressible} surfaces of Haken \cite{haken:68}, which have played a central role in 3-manifold topology. The second class are the {\it strongly irreducible} surfaces of Casson and Gordon \cite{cg:87}. These surfaces have become important in answering a wide variety of questions relating to the Heegaard genus of 3-manifolds. The third class are the {\it critical surfaces} of \cite{crit} and \cite{gordon}. 

In \cite{TopIndexI} we show that all three of these classes are special cases of {\it topologically minimal} surfaces. Such surfaces are the topological analogue of geometrically minimal surfaces. We will say more about this in Section \ref{s:TopMinSfcs}. 

For the following definitions, $M$ will denote a compact, orientable 3-manifold.

\begin{dfn}
\label{d:essential}
Let $F$ be a properly embedded surface in $M$. Let $\gamma$ be a loop in $F$. $\gamma$ is {\it essential} on $F$ if it is a loop that does not bound a disk in $F$.  A {\it compression} for $F$ is a disk, $D$, such that $D \cap F=\bdy D$ is essential on $F$.
\end{dfn}

\begin{dfn}
Let $F$ be a properly embedded surface in $M$. The surface $F$ is {\it compressible} if there is a compression for it. Otherwise it is {\it incompressible}.
\end{dfn}

\begin{dfn}
Let $H$ be a separating, properly embedded surface in $M$. Let $V$ and $W$ be compressions on opposite sides of $H$. Then we say $(V,W)$ is a {\it weak reducing pair} for $H$ if $V \cap W=\emptyset$. 
\end{dfn}

\begin{dfn}
\label{d:TTStrongIrreducibility}
Let $H$ be a separating, properly embedded surface in $M$ which is not a torus. Then we say $H$ is {\it strongly irreducible} if there are compressions on opposite sides of $H$, but no weak reducing pairs. 
\end{dfn}

\begin{dfn}
\label{d:critical}
Let $H$ be a properly embedded, separating surface in $M$. The surface $H$ is {\it critical} if the compressions for $H$ can be partitioned into sets $C_0$ and $C_1$ such that:
\begin{enumerate}
	\item For each $i=0,1$ there is at least one pair of disks $V_i, W_i \in C_i$ such that $(V_i,W_i)$ is a weak reducing pair. 
	\item If $V \in C_0$ and $W \in C_1$ then $(V,W)$ is not a weak reducing pair.
\end{enumerate}
\end{dfn}

\section{Topologically minimal surfaces.}
\label{s:TopMinSfcs}

Topologically minimal surfaces generalize incompressible, strongly irreducible, and critical surfaces. In this section we review the definition of a topologically minimal surface, and its associated {\it topological index}, as given in \cite{TopIndexI}. 

Let $H$ be a properly embedded, separating surface with no torus components in a compact, orientable 3-manifold $M$. Then the {\it disk complex},  $\Gamma(H)$, is defined as follows:
	\begin{enumerate}
		\item  Vertices of $\Gamma(H)$ are isotopy classes of compressions for $H$. 
		\item A set of $m+1$ vertices forms an $m$-simplex if there are representatives for each that are pairwise disjoint. 
	\end{enumerate}

\begin{dfn}
\label{d:Indexn}
The {\it homotopy index} of a complex $\Gamma$ is defined to be 0 if $\Gamma=\emptyset$, and the smallest $n$ such that $\pi_{n-1}(\Gamma)$ is non-trivial, otherwise. If $\Gamma$ is contractible, its homotopy index is left undefined. We say a surface $H$ is {\it topologically minimal} if its disk complex $\Gamma(H)$ is either empty or non-contractible. When $H$ is topologically minimal, we say its {\it topological index} is the homotopy index of $\Gamma(H)$. 
\end{dfn}

In \cite{TopIndexI} we show that incompressible surfaces have topological index 0, strongly irreducible surfaces (see \cite{cg:87}) have topological index 1, and critical surfaces (see \cite{crit}) have topological index 2. In \cite{existence} we show that for each $n$ there is a manifold that contains a surface whose topological index is $n$.

\begin{thm}[\cite{TopIndexI}, Theorem 3.7.]
\label{t:OriginalIntersection}
Let $F$ be a properly embedded, incompressible surface in an irreducible 3-manifold $M$. Let $H$ be a properly embedded surface in $M$ with topological index $n$. Then $H$ may be isotoped so that
	\begin{enumerate}
		\item $H$ meets $F$ in $p$ saddles, for some $p \le n$. Away from these tangencies $H$ is transverse to $F$.
		\item The topological index of $H \setminus N(F)$ in $M \setminus N(F)$, plus $p$, is at most $n$. 
	\end{enumerate}
\end{thm}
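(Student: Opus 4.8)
The plan is to isotope $H$ into an ``efficient position'' with respect to a product neighborhood $N(F) \cong F \times [-1,1]$, and then to compare the disk complex $\Gamma(H)$ with the disk complex of the cut-open surface $H^- := H \setminus N(F)$, keeping track of the saddle tangencies.

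First I would put $H$ in general position so that the projection $F \times [-1,1] \to [-1,1]$ restricts on $H \cap N(F)$ to a Morse function whose only critical points are centers and saddles, with $H$ transverse to $F = F \times \{0\}$ away from the saddles, and I would choose such a position minimizing (lexicographically) the number of critical points and then $|H \cap F|$. In such a position there are no centers: the innermost cap of a center tangency is a disk $D \subset H$ with $\bdy D$ in a level surface $F \times \{t\}$; since $F$, hence $F \times \{t\}$, is incompressible, $\bdy D$ bounds a disk $D' \subset F \times \{t\}$, and since $M$ is irreducible the sphere $D \cup D'$ bounds a ball, across which $H$ may be isotoped to reduce the number of critical points --- contradicting minimality. (This is the standard technique for removing trivial intersections using incompressibility and irreducibility, and a similar innermost-disk argument removes any saddle whose presence is ``inessential.'') This leaves $p$ saddles, and hence establishes conclusion~(1) except for the bound $p \le n$, which will fall out of conclusion~(2).

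For the index inequality, let $m$ be the topological index of $H^-$ in $M \setminus N(F)$; I would aim to show that the homotopy index of $\Gamma(H)$ is at least $m + p$. Since that index is $n$ and $m \ge 0$, this gives both $m + p \le n$ and $p \le n$. The picture to keep in mind is that an essential saddle behaves like a \emph{suspension}: pushing $H$ off $F$ to the $+$ side near the saddle and pushing it to the $-$ side are ``opposite'' modifications, so coning a non-trivial sphere in the disk complex off on one side and then the other produces a non-trivial sphere one dimension up. Combinatorially, one expects $\Gamma(H)$ to contain a copy of the join $\Gamma(H^-) * S^{p-1}$, the sphere factor being the join of $p$ two-point subcomplexes, one pair of vertices per saddle: a compression of $H^-$ disjoint from $\bdy N(F)$ is automatically a compression of $H$ (after the reduction above, $H \cap N(F)$ consists of essential vertical annuli and saddle pieces, into which no essential curve of $H^-$ can be capped off), which gives a simplicial inclusion $\Gamma(H^-) \hookrightarrow \Gamma(H)$, and the saddle vertices extend it to the join. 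Given that any non-trivial map $S^{n-1} \to \Gamma(H)$ can be homotoped into this sub-join, additivity of the homotopy index under joins yields the claim. Along the way one must also check that $H^-$ is itself incompressible or topologically minimal, i.e.\ that $\Gamma(H^-)$ is empty or non-contractible: otherwise a contraction of $\Gamma(H^-)$ would propagate through the same join picture to a contraction of $\Gamma(H)$, contradicting the hypothesis that $H$ is topologically minimal.

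The crux --- and the expected main obstacle --- is the claim that an arbitrary non-trivial sphere in $\Gamma(H)$ may be pushed into the sub-join built from $\Gamma(H^-)$ and the saddles. A general compression of $H$ can meet $F$ and the saddle regions in a complicated way, so this requires a surgery argument on the map of the sphere: replacing each compression appearing in the map by compressions of $H^-$ together with saddle disks, carrying out these replacements coherently over all simplices of the sphere, and verifying both that the homotopy class is preserved and that no surgery reintroduces a center or an inessential saddle --- here the minimality of the chosen position and the explicit local models near the saddles are what make the surgeries available. This surgery argument is where essentially all the difficulty lies; removing the centers and running the join/suspension bookkeeping is comparatively routine.
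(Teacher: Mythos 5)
Your opening reduction (eliminating center tangencies by innermost-disk, incompressibility of $F$, and irreducibility of $M$) is correct and is indeed the first step in the paper's argument. But the core of your proposal — that $\Gamma(H)$ contains a sub-join $\Gamma(H^-) * S^{p-1}$ with one $S^0$ factor per saddle — rests on a claim that does not hold. A saddle tangency of $H$ with $F$ does not naturally produce a pair of compressing disks for $H$: the two resolutions of a saddle are isotopies of $H$ (modifications of the surface itself), not vertices of the disk complex of $H$. You would need to exhibit, for each saddle, two actual compression disks whose link behavior within $\Gamma(H)$ realizes the $S^0$ join factor, and no such disks are forthcoming from the local model near a saddle. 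Without that, the simplicial inclusion $\Gamma(H^-) * S^{p-1} \hookrightarrow \Gamma(H)$ that drives your suspension picture simply isn't there, and the subsequent appeal to additivity of homotopy index under joins has nothing to apply to. The step you single out as the crux — homotoping an arbitrary essential $(n-1)$-sphere in $\Gamma(H)$ into the sub-join — is therefore not merely hard-but-routine; it is ill-posed as stated.

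The actual proof of this theorem in \cite{TopIndexI} proceeds by a different mechanism, one that this very paper replays almost verbatim in the proof of Theorem~\ref{t:MainTheorem}: one assumes, for contradiction, that no isotopy achieves the stated conclusion, takes a non-trivial simplicial map $\iota : S^{n-1} \to \Gamma(H)$, and builds a cell-by-cell extension $\Psi$ to an $n$-ball (via the dual cell decomposition $\Sigma^*$, using the subspaces $V_\tau$ of compressions disjoint from a given simplex's ``bad'' disks and the inductive vanishing of the relevant homotopy groups provided by the contradiction hypothesis). The key quantitative input, $\dim(\tau^\bdy) \le \dim(\tau)$, is what makes each inductive extension possible. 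This produces a null-homotopy of $\iota$, contradicting $\pi_{n-1}(\Gamma(H)) \neq 1$. If you want to repair your write-up, the thing to emulate is the dual-cell argument of Theorem~\ref{t:MainTheorem} in Section~\ref{s:IndexRelBdy} rather than a join decomposition: the bound $p + \mathrm{index}(H^-) \le n$ comes out of the dimension bookkeeping in that extension, not out of a product structure on $\Gamma(H)$.
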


In addition to this result about topological index, we will need the following:

\begin{lem}
\label{l:EssentialBoundary}
Suppose $H$ is a topologically minimal surface which is properly embedded in a 3-manifold $M$ with incompressible boundary. Then each loop of $\bdy H$ either bounds a component of $H$ that is a boundary-parallel disk, or is essential on $\bdy M$.
\end{lem}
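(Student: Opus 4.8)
The plan is to argue by contradiction: suppose some loop $\gamma$ of $\bdy H$ is inessential on $\bdy M$ but does not bound a boundary-parallel disk component of $H$. Since $\bdy M$ is incompressible, an innermost such $\gamma$ on $\bdy M$ bounds a disk $E \subset \bdy M$ whose interior misses $\bdy H$. Pushing $E$ slightly into $M$, we get a compression $D$ for $H$ (its boundary $\gamma$ cannot bound a disk in $H$, else $H$ would have had a boundary-parallel disk component, contrary to assumption; here one should be a little careful that $\gamma$ being inessential on $\bdy M$ but bounding in $H$ forces, via irreducibility or at least a standard innermost-disk argument, a boundary-parallel disk component). The key point is then that this disk $D$ can be isotoped off of \emph{every} other compression of $H$: any compressing disk $D'$ for $H$ can be isotoped so that $\bdy D'$ misses the collar neighborhood of $\gamma$ in $\bdy M$ in which $\bdy D$ lives, so $D$ and $D'$ are disjoint after a small isotopy. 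Hence the vertex $[D]$ of $\Gamma(H)$ is connected by an edge to every other vertex; that is, $[D]$ is a cone point, so $\Gamma(H)$ is contractible. This contradicts topological minimality of $H$.

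The steps, in order, are: (1) set up the contradiction hypothesis and extract an innermost inessential loop $\gamma$ of $\bdy H$ on $\bdy M$, bounding a disk $E \subset \bdy M$ with interior disjoint from $\bdy H$; (2) use incompressibility of $\bdy M$ (and irreducibility/standard innermost arguments) to show that either $\gamma$ bounds a boundary-parallel disk component of $H$ — the excluded case — or $\gamma$ is essential on $H$, so that pushing $E$ into $M$ yields a genuine compression $D$ for $H$; (3) show $[D]$ is disjoint from an arbitrary vertex $[D']$ of $\Gamma(H)$, by isotoping $\bdy D'$ off the collar of $\gamma$; (4) conclude $\Gamma(H)$ is a cone with cone point $[D]$, hence contractible, contradicting Definition \ref{d:Indexn}. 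If $\Gamma(H)$ was empty to begin with, i.e.\ $H$ has no compressions at all, then step (2) already gives the contradiction unless every boundary loop bounds a boundary-parallel disk component, which is the desired conclusion.

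The main obstacle is step (3): making precise that a compression $D$ coming from a disk $E \subset \bdy M$ can be made disjoint from \emph{all} other compressions simultaneously, i.e.\ that $[D]$ really is a cone point and not merely adjacent to many vertices. The cleanest way is to fix a product collar $\bdy M \times [0,1] \subset M$ with $\bdy M \times \{0\} = \bdy M$, arrange $D = E \times \{1\} \cup \gamma \times [0,1]$, and observe that any other compressing disk $D'$ may be isotoped (rel nothing) so that $D' \cap (\bdy M \times [0,1])$ is a collection of vertical annuli $\bdy D' \times [0,1]$; since $\bdy D'$ can be taken disjoint from $\gamma$ on $\bdy M$ (both are embedded loops, and $\gamma$ is innermost, so push $\bdy D'$ out of $E$ or isotope it off $E$ entirely — here one uses that $\bdy D'$ essential in $\bdy H$ can't be forced to cross $\gamma$), we get $D \cap D' = \emptyset$. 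A secondary subtlety, worth a sentence, is handling the case where $H$ itself is a disk or where $\gamma$ is the unique boundary loop of a disk component: such a component is then boundary-parallel (as $\bdy M$ is incompressible and $M$ irreducible, a disk with inessential boundary on $\bdy M$ is boundary-parallel), landing in the allowed case of the conclusion.
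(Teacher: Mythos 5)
Your approach is essentially the paper's: find a compression near $\bdy M$ coming from a disk bounded by an inessential loop of $\bdy H$, observe that it is a cone point of $\Gamma(H)$, and conclude that $\Gamma(H)$ is contractible. However, there is a small but real gap in your innermost-curve selection. You take $\gamma$ innermost among loops of $\bdy H$ that are \emph{both} inessential on $\bdy M$ \emph{and} do not bound a boundary-parallel disk component of $H$, and then assert that the disk $E \subset \bdy M$ bounded by $\gamma$ has interior disjoint from $\bdy H$. That assertion can fail: the interior of $E$ may still contain boundary curves of boundary-parallel disk components of $H$, since those were excluded from the class over which you took ``innermost.'' The paper sidesteps this by first deleting all boundary-parallel disk components of $H$ (this leaves $\Gamma(H)$ unchanged, as disks contribute nothing to the disk complex), and \emph{then} taking $\alpha$ innermost among the remaining loops that are inessential on $\bdy M$. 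After that pruning, the innermost disk genuinely misses $\bdy H$, and moreover $\alpha$ cannot bound a disk in $H$ (any such disk would be a whole component, which has been removed), so the pushed-in disk is a bona fide compression. Your step (3) --- that this compression is disjoint from every other compression and hence a cone point --- and your handling of the degenerate disk-component case are both sound and match the paper's reasoning. So: same route, with a fixable but genuine misstep in the order of operations for the innermost argument.
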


\begin{proof}
Begin by removing from $H$ all components that are boundary-parallel disks. If nothing remains, then the result follows. Otherwise, the resulting surface (which we continue to call $H$) is still topologically minimal, as it has the same disk complex. Now, let $\alpha$ denote a loop of $\bdy H$ that is innermost among all such loops that are inessential on $\bdy M$ . Then $\alpha$ bounds a compression $D$ for $H$ that is disjoint from all other compressions. Hence, every maximal dimensional simplex of $\Gamma(H)$ includes the vertex corresponding to $D$. We conclude $\Gamma(H)$ is contractible to $D$, and thus $H$ was not topologically minimal. 
\end{proof}

\section{Topological index relative to boundaries.}
\label{s:IndexRelBdy}

In this section we define the topological index of a surface $H$ in a 3-manifold $M$ {\it with respect to $\bdy M$.} We then show that we may always obtain such a surface from a topologically minimal surface by a sequence of $\bdy$-compressions. 

Let $H$ be a properly embedded, separating surface with no torus components in a compact, orientable 3-manifold $M$. Then the complex $\Gamma(H;\bdy M)$, is defined as follows:
	\begin{enumerate}
		\item  Vertices of $\Gamma(H;\bdy M)$ are isotopy classes of compressions and $\bdy$-compressions for $H$. 
		\item A set of $m+1$ vertices forms an $m$-simplex if there are representatives for each that are pairwise disjoint. 
	\end{enumerate}

\begin{dfn}
\label{d:Indexn}
We say the {\it topological index of a surface $H$ with respect to $\bdy M$} is the homotopy index of the complex $\Gamma(H;\bdy M)$. If $H$ has a topological index with respect to $\bdy M$ then we say it is {\it topologically minimal with respect to $\bdy M$}. 
\end{dfn}

In Corollary 3.8 of \cite{TopIndexI} we showed that a topologically minimal surface can always be isotoped to meet an incompressible surface in a collection of essential loops. The exact same argument, with the words ``compression or $\bdy$-compression" substituted for ``compression" and ``innermost loop/outermost arc" substituted for ``innermost loop," shows:

\begin{lem}
\label{l:EssentialIntersection}
Let $M$ be a compact, orientable, irreducible 3-manifold with incompressible boundary. Let $H$ and $Q$ be properly embedded surfaces in $M$, where $H$ is topologically minimal with respect to $\bdy M$ and $Q$ is both incompressible and $\bdy$-incompressible. Then $H$ may be isotoped so that it meets $Q$ in a (possibly empty) collection of loops and arcs that are essential on both.  
\end{lem}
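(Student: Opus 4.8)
The plan is to closely follow the proof of Corollary~3.8 of \cite{TopIndexI}, upgrading ``compression'' to ``compression or $\bdy$-compression'' and ``innermost loop'' to ``innermost loop or outermost arc'' throughout, as the sentence preceding the statement suggests. First I would isotope $H$ to be transverse to $Q$, so that $H \cap Q$ is a disjoint union of loops and arcs, and then replace $H$ by the representative of its isotopy class that minimizes $|H \cap Q|$ among all transverse positions. Since such an isotopy does not change $\Gamma(H;\bdy M)$, the surface $H$ remains topologically minimal with respect to $\bdy M$. The goal is to show that in this minimal position every component of $H\cap Q$ is essential on both $H$ and $Q$.

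Next come the standard surgery reductions. If some loop of $H \cap Q$ is inessential on $H$, an innermost such loop on $H$ bounds a subdisk $E \subseteq H$ with $\operatorname{int}E$ disjoint from $Q$; if $\bdy E$ were essential on $Q$ then $E$ would compress $Q$, contradicting incompressibility, and otherwise $E$ together with the disk it cobounds in $Q$ is a $2$-sphere bounding a ball (irreducibility of $M$), across which $H$ can be pushed to lower $|H\cap Q|$ --- a contradiction either way. The analogous outermost-arc argument eliminates arcs of $H\cap Q$ that are inessential on $H$: an outermost such arc cuts off a subdisk $E\subseteq H$ that is either a $\bdy$-compression for $Q$ (contradicting $\bdy$-incompressibility of $Q$) or, together with a disk in $Q$ and --- using incompressibility of $\bdy M$ --- a disk in $\bdy M$, bounds a ball allowing one to reduce $|H\cap Q|$. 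Hence we may assume every component of $H\cap Q$ is essential on $H$ while, for contradiction, some component is inessential on $Q$. An innermost inessential loop, respectively an outermost inessential arc, of $H\cap Q$ on $Q$ then cuts off a disk $D\subseteq Q$ with interior disjoint from $H$, whose boundary loop (respectively whose subarc lying on $H$) is essential on $H$; thus $D$ is a compression, respectively a $\bdy$-compression, for $H$, and after a small push it satisfies $D\cap Q=\emptyset$.

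The final step is to derive a contradiction with the topological minimality of $H$ relative to $\bdy M$ from the existence of this disk $D$, and this is where I expect the real work to be, exactly as in \cite{TopIndexI}. The point to establish is that $D$ can be taken disjoint from \emph{every} compression and $\bdy$-compression of $H$: given any such disk $E$, one isotopes $E$ to minimize $|E\cap Q|$ (using incompressibility and $\bdy$-incompressibility of $Q$ together with irreducibility of $M$ to remove closed components of $E\cap Q$) and then argues, as in the proof of Corollary~3.8, that $E$ can be pushed off $D$ because $D$ is localized in a collar of $Q$ lying over the subsurface bounded by the innermost/outermost curve. Granting this, the vertex $[D]$ of $\Gamma(H;\bdy M)$ lies in every maximal simplex, so $\Gamma(H;\bdy M)$ deformation retracts onto $[D]$ and in particular is contractible; since $\Gamma(H;\bdy M)$ is also nonempty (it contains $[D]$), this contradicts the assumption that $H$ is topologically minimal with respect to $\bdy M$, and completes the argument. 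The delicate part --- and the only place where one cannot simply invoke irreducibility --- is precisely this disjointness claim, which is why the proof must mirror the argument of \cite{TopIndexI} rather than re-derive it from scratch.
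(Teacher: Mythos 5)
Your proposal is correct and mirrors exactly what the paper does, which is to translate the proof of Corollary~3.8 of \cite{TopIndexI} via the stated substitutions (``compression'' $\to$ ``compression or $\bdy$-compression,'' ``innermost loop'' $\to$ ``innermost loop / outermost arc''). The skeleton you give --- minimize $|H \cap Q|$, run innermost/outermost reductions using incompressibility and $\bdy$-incompressibility of $Q$, irreducibility of $M$, and incompressibility of $\bdy M$, and then observe that a residual component inessential on $Q$ hands you a vertex of $\Gamma(H;\bdy M)$ that cones off the complex --- matches the explicit model the paper gives in Lemma~\ref{l:EssentialBoundary}, and you correctly flag the disjointness claim (which is \emph{not} automatic here, unlike in Lemma~\ref{l:EssentialBoundary} where $D$ lives in $\bdy M$) as the one step that must genuinely be imported from \cite{TopIndexI}.
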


\begin{dfn}
\label{d:H/D}
If $D$ is a compression or $\bdy$-compression for a surface $H$ then we construct the surface $H/D$ as follows. Let $M(H)$ denote the manifold obtained from $M$ by cutting open along $H$. Let $B$ denote a neighborhood of $D$ in $M(H)$. The surface $H/D$ is obtained from $H$ by removing $B \cap H$ and replacing it with the frontier of $B$ in $M(H)$. The surface $H/D$ is said to have been obtained from $H$ by {\it compressing} or {\it $\bdy$-compressing} along $D$. Similarly, suppose $\tau$ is some simplex of $\Gamma(H;\bdy M)$ and $\{D_i\}$ are pairwise disjoint representatives of the vertices of $\tau$. Then $H/\tau$ is defined to be the surface obtained from $H$ by simultaneously compressing/$\bdy$-compressing along each disk $D_i$. 
\end{dfn}

We leave the proof of the following lemma as an exercise for the reader.

\begin{lem}
\label{l:CompressionEffect}
Suppose $M$ is an irreducible 3-manifold with incompressible boundary. Let $D$ be a $\bdy$-compression for a properly embedded surface $H$. Then $\Gamma(H/D)$ is the subset of $\Gamma(H)$ spanned by those compressions that are disjoint from $D$. \qed
\end{lem}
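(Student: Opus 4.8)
The plan is to build a simplicial isomorphism from $\Gamma(H/D)$ onto the subcomplex of $\Gamma(H)$ spanned by those vertices that are represented by compressions disjoint from $D$, and the whole argument localizes in a model ball. Let $B$ be a regular neighborhood of $D$ in $M$. Then $B$ is a ball, $B\cap H$ is a single disk $R$ (a neighborhood in $H$ of the arc $\alpha := D\cap H$), $B\cap(H/D)$ is a pair of disjoint disks $D',D''$ (parallel copies of $D$), and $H$ and $H/D$ agree outside $B$ (this is just the local picture of Definition~\ref{d:H/D}); write $H_0 := H\setminus\mathrm{int}\,B = (H/D)\setminus\mathrm{int}\,B$. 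I first record how $H_0$ sits inside the two surfaces. Each of $D',D''$ meets $H_0$ in a single arc of $\bdy H_0$ and has the remainder of its boundary on $\bdy(H/D)$, so $H_0\hookrightarrow H/D$ is a deformation retract. And $H$ is recovered from $H_0$ by attaching one $1$–handle along $\bdy H_0$, glued along the two arcs of $\bdy R\cap\bdy H_0$, so $H_0\hookrightarrow H$ is $\pi_1$–injective. Hence a simple closed curve lying in $H_0$ is essential in $H/D$ if and only if it is essential in $H_0$ if and only if it is essential in $H$.

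Next I define a vertex map $\Phi$. Let $E$ be a compression for $H/D$. Since each of $D',D''$ is cut off from $H/D$ by the single arc along which it meets $H_0$, and since $\bdy E$ is disjoint from $\bdy(H/D)$, an outermost–arc argument on $D'\cup D''$ isotopes $\bdy E$ off these two disks and into $\mathrm{int}\,H_0$, where it is still essential, hence essential in $H$. Now a sequence of innermost–disk surgeries, each using the irreducibility of $M$, isotopes $E$ so that its interior is disjoint first from $H$ and then from $D$; the result is a compression for $H$ disjoint from $D$. Set $\Phi([E]) = [E]$; since the moves above are canonical up to isotopy, $\Phi$ descends to isotopy classes.

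Conversely, if $E'$ is a compression for $H$ that is disjoint from $D$, then $\bdy E'\cap\alpha\subseteq E'\cap D=\emptyset$ automatically, so after shrinking $R$ we have $\bdy E'\subset H_0$; being essential in $H$ it is essential in $H_0$ (a disk it bounded in $H_0$ would be a disk in $H$), hence essential in $H/D$, and its interior is already disjoint from $H/D$, so $E'$ is itself a compression for $H/D$ with $\Phi([E'])=[E']$. Thus $\Phi$ admits a two–sided inverse on vertices and is a bijection onto the set of vertices of $\Gamma(H)$ represented by compressions disjoint from $D$; injectivity is seen the same way, since after normalization the disk in question is disjoint from $B$, where $H$ and $H/D$ coincide, so an ambient isotopy realizing an equivalence on one side may be pushed off $B$ (again by irreducibility) to give one on the other.

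Finally, to see that $\Phi$ and $\Phi^{-1}$ are simplicial, one begins with pairwise disjoint representatives of the vertices of a simplex of $\Gamma(H/D)$ and performs the outermost–arc and innermost–disk normalizations on their union, keeping the disks pairwise disjoint throughout, obtaining pairwise disjoint compressions for $H$ disjoint from $D$; the reverse direction only requires shrinking $R$. This identifies $\Gamma(H/D)$ with the subcomplex of $\Gamma(H)$ spanned by the compressions disjoint from $D$, as claimed. The step I expect to demand the most care is this simultaneous normalization, that is, carrying out the innermost–disk surgeries on a whole family of compressions at once without introducing new intersections among them, together with checking that $\Phi$ is well defined on isotopy classes; the two homotopy–theoretic facts about $H_0$ from the first paragraph, plus routine general position, take care of everything else.
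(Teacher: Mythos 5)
The paper gives no proof of this lemma---it is explicitly left as an exercise for the reader---so there is no argument of the paper's to compare yours against; I am evaluating your proof on its own. Your proof is essentially correct and is what I would expect the author had in mind: set up the local model in which $H$ and $H/D$ agree outside a regular neighborhood $B$ of $D$, with $B\cap H$ a band $R$ and $B\cap(H/D)$ the two flaps $D',D''$; observe that a simple closed curve in $H_0$ is essential in $H_0$, $H$, and $H/D$ simultaneously; and then use outermost-arc and innermost-disk moves (the latter via irreducibility) to identify the two vertex sets and check simpliciality.

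Two remarks on the write-up. First, the appeal to $\pi_1$-injectivity of $H_0\hookrightarrow H$ is correct but a small detour: the fact you actually use---a curve $\gamma\subset H_0$ bounds a disk in $H$ iff it bounds one in $H_0$---has a direct one-line argument, since a disk $\Delta\subset H$ with $\bdy\Delta=\gamma$ disjoint from the properly embedded arc $\alpha$ must itself be disjoint from $\alpha$ (arcs of $\alpha\cap\Delta$ would need endpoints on $\gamma$). Second, the step you yourself flag as the delicate one really is the crux and is the only place where something is asserted rather than argued: that for compressions $E_1,E_2$ already disjoint from $B$, an isotopy preserving $H$ can be traded for one preserving $H/D$ and vice versa (``pushed off $B$ by irreducibility''). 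The cleanest way to close this is to note that in an irreducible manifold a compression is determined up to isotopy by its boundary curve together with the side it lies on (two such disks with the same essential boundary cobound, after removing inessential circles of intersection, a sphere that bounds a ball), and that for curves lying in $H_0$, isotopy in $H$, in $H_0$, and in $H/D$ all coincide because any isotopy of such a curve can be made transverse to, and then pushed off, the arcs $\alpha$ and $\alpha',\alpha''$. With that one justification supplied, the argument is complete.
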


\begin{thm}
\label{t:MainTheorem}
Suppose $M$ is an irreducible 3-manifold with incompressible boundary.  Let $H$ be a surface whose topological index is $n$. Then either $H$ has topological index at most $n$ with respect to $\bdy M$, or there is a simplex $\tau$ of $\Gamma(H;\bdy M) \setminus \Gamma(H)$, such that the topological index of $\Gamma(H/\tau)$ is at most $n-{\rm dim}(\tau)$. 
\end{thm}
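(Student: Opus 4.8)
The plan is to compare the complex $\Gamma(H;\bdy M)$ with its subcomplex $\Gamma(H)$, using the hypothesis that $\Gamma(H)$ has homotopy index exactly $n$ (it is either empty or has $\pi_{n-1}$ nontrivial). If $\Gamma(H;\bdy M)$ already has homotopy index at most $n$, we are in the first case and there is nothing more to do, so assume not. Since $\Gamma(H;\bdy M)$ has homotopy index greater than $n$, the inclusion $\Gamma(H) \hookrightarrow \Gamma(H;\bdy M)$ cannot be null-homotopic relative to the sphere detecting the index of $\Gamma(H)$: more precisely, a map $f:S^{n-1}\to \Gamma(H)$ representing a nontrivial element of $\pi_{n-1}(\Gamma(H))$ must become trivial in $\pi_{n-1}(\Gamma(H;\bdy M))$ (otherwise $\Gamma(H;\bdy M)$ would have homotopy index at most $n$), so it bounds a disk $g:D^n \to \Gamma(H;\bdy M)$. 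The image of this disk must use at least one simplex not contained in $\Gamma(H)$, i.e.\ a simplex $\tau$ containing at least one $\bdy$-compression vertex; pick such a $\tau$. The heuristic is that $\bdy$-compressing along $\tau$ should simplify the disk complex enough to drop the index by $\dim(\tau)$.

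The key step is to understand $\Gamma(H/\tau)$. By iterating Lemma \ref{l:CompressionEffect} (after first peeling off the purely-compression vertices of $\tau$, if any, which only restricts to a subcomplex and can be absorbed into the bookkeeping), $\Gamma(H/\tau)$ is precisely the subcomplex of $\Gamma(H)$ (equivalently of $\Gamma(H;\bdy M)$) spanned by those vertices whose representatives are disjoint from all the $D_i$ comprising $\tau$ — call this the \emph{link-type} subcomplex $L_\tau$. Equivalently, $L_\tau$ is the full subcomplex of $\Gamma(H;\bdy M)$ on the vertices adjacent to every vertex of $\tau$. So I need to show: if $\Gamma(H)$ has homotopy index $n$ but $\Gamma(H;\bdy M)$ has strictly larger homotopy index, then for a suitably chosen $\tau$ the subcomplex $L_\tau$ has homotopy index at most $n-\dim(\tau)$.

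The mechanism I would use is the standard join/suspension relation in simplicial complexes: if every maximal simplex of $\Gamma(H;\bdy M)$ that meets $\tau$ actually contains $\tau$, then a neighborhood of $\tau$ in $\Gamma(H;\bdy M)$ looks like $\tau * L_\tau$, the join, whose reduced homotopy is the $(\dim\tau{+}1)$-fold suspension of that of $L_\tau$; and more globally, collapsing the star of $\tau$ relates $\pi_*(\Gamma(H;\bdy M))$ to $\pi_{*-\dim\tau-1}(L_\tau)$. Choosing $\tau$ to be a simplex whose presence is \emph{forced} — e.g.\ a simplex of minimal dimension among those in $\Gamma(H;\bdy M)\setminus\Gamma(H)$ that is needed to fill the sphere $f$, or one realizing the "extra" homotopy of $\Gamma(H;\bdy M)$ — one gets that the nontriviality of $\pi_{n}(\Gamma(H;\bdy M))$ (or whatever exactly the index is) descends to nontriviality of $\pi_{n-\dim\tau-1+\epsilon}(L_\tau)$, giving the bound $n-\dim(\tau)$ on the homotopy index of $\Gamma(H/\tau)=L_\tau$. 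A cleaner route, which I suspect is the intended one: run the argument of Lemma \ref{l:EssentialBoundary} / the contractibility argument backwards. If $\Gamma(H/\tau)$ had homotopy index larger than $n-\dim(\tau)$ for \emph{every} $\tau\in\Gamma(H;\bdy M)\setminus\Gamma(H)$, then a Mayer--Vietoris / nerve-type assembly of $\Gamma(H;\bdy M)$ from $\Gamma(H)$ together with the stars of the $\bdy$-compression simplices would force $\Gamma(H;\bdy M)$ to have homotopy index at most $n$, contradicting our standing assumption.

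The main obstacle, as in the proofs in \cite{TopIndexI}, is purely combinatorial-topological: making rigorous the passage from "a sphere in $\Gamma(H)$ dies in $\Gamma(H;\bdy M)$" to "some explicit $\bdy$-compression simplex $\tau$ has the link-type subcomplex $L_\tau$ of small homotopy index." One must choose $\tau$ carefully (minimal dimension, innermost in the filling disk $g$, in the spirit of the innermost-loop/outermost-arc choices already used to upgrade Corollary 3.8 of \cite{TopIndexI} to Lemma \ref{l:EssentialIntersection}), and then verify that the star of $\tau$ contributes exactly a $(\dim\tau)$-shift to the homotopy index — handling degenerate cases where $\dim\tau=n$, where $\tau$ contains compression vertices as well as $\bdy$-compression vertices, and where $L_\tau$ turns out to be empty (homotopy index $0$, which still satisfies the inequality since $\dim\tau\le n$). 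The 3-manifold input — irreducibility and incompressibility of $\bdy M$ — enters only through Lemma \ref{l:CompressionEffect}, which identifies $\Gamma(H/D)$ as the "disjoint-from-$D$" subcomplex; everything after that is a statement about homotopy indices of a complex, its subcomplex, and links of simplices.
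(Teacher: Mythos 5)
You have the right high-level outline — proof by contradiction, the observation that a nontrivial $(n-1)$-sphere in $\Gamma(H)$ must bound an $n$-disk in $\Gamma(H;\bdy M)$, and the correct identification via Lemma \ref{l:CompressionEffect} of $\Gamma(H/\tau)$ with the subcomplex of $\Gamma(H)$ spanned by compressions disjoint from the $\bdy$-compressions of $\tau$. But the actual mechanism that closes the argument is missing, and the mechanisms you suggest in its place (join/suspension of the link of a single carefully chosen $\tau$, or a Mayer--Vietoris/nerve assembly) are genuinely different and do not obviously work. In particular, the paper never selects a single innermost or minimal-dimension $\tau$, and it does not derive a contradiction by showing $\Gamma(H;\bdy M)$ has small homotopy index; the contradiction is that the original sphere becomes null-homotopic \emph{inside} $\Gamma(H)$, violating the hypothesis that $H$ has topological index $n$.

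The missing idea is an obstruction-theoretic induction over a \emph{dual cell decomposition} of the filling disk. Concretely: triangulate the $n$-ball $B$ so that the filling map $\iota$ is simplicial, and for each simplex $\tau$ of this triangulation let $\tau^{\bdy}$ be the sub-simplex of vertices mapping to $\bdy$-compressions, and $V_\tau=\Gamma(H/\tau^{\bdy})$. Assuming the theorem fails for every such $\tau^{\bdy}$ gives $\pi_i(V_\tau)=1$ for all $i\le n-1-\dim(\tau^{\bdy})$. One then passes to the dual cell decomposition $\Sigma^*$, where the dual $\tau^*$ of $\tau$ has $\dim(\tau^*)=n-\dim(\tau)\le n-\dim(\tau^{\bdy})$, so that when building a map $\Psi$ on the $d$-skeleton of $\Sigma^*$ with $\Psi(\tau^*)\subset V_\tau$, the extension over a $d$-cell $\tau^*$ is obstructed only by $\pi_{d-1}(V_\tau)$, which vanishes by the dimension count. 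The nesting $V_\sigma\subset V_\tau$ for $\tau\subset\bdy\sigma$ makes the induction consistent, and the resulting $\Psi:B'\to\Gamma(H)$ null-homotopes $\iota(S)$ in $\Gamma(H)$ — the contradiction. This dual-cell device is the crux of the proof, and without it (or a genuine replacement for it) your argument stays at the level of a heuristic: the suspension picture would apply only if the star of a single $\tau$ carried all the extra homotopy of $\Gamma(H;\bdy M)$, which there is no reason to expect, and the Mayer--Vietoris sketch aims at the wrong contradiction.
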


%Oops! Check that $H$ really does have top index n wrt K \cup F. Maybe I just know $\pi_{n-1}(\Gamma(H;K \cup F) \ne 1$, but don't know anything about lower homotopy groups. 

\begin{proof}
If $\Gamma(H) = \emptyset$ then the result is immediate, as any surface obtained by $\bdy$-compressing an incompressible surface must also be incompressible. 

If $\Gamma(H) \ne \emptyset$ then, by assumption, there is a non-trivial map $\iota$ from an $(n-1)$-sphere $S$ into the $(n-1)$-skeleton of  $\Gamma(H)$. Assuming the theorem is false will allow us to construct inductively a map  $\Psi$ of a $n$-ball $B$ into $\Gamma(H)$ such that $\Psi(\bdy B)=\iota(S)$. The existence of such a map contradicts the non-triviality of $\iota$. 

%For each vertex $v_i$ of $\Gamma(H)$ in the image of $\iota$, choose a representative $D_i$ such that if $\{v_i,v_j\}$ is a 1-simplex of $\Gamma(H)$, then $D_i \cap D_j=\emptyset$.

Note that $\Gamma(H) \subset \Gamma(H;\bdy M)$. If  $\pi_{n-1}(\Gamma(H;\bdy M)) \ne 1$ then the result is immediate. Otherwise,  $\iota$ can be extended to a map from  an $n$-ball $B$ into $\Gamma(H;\bdy M)$.

Let $\Sigma$ denote a triangulation of $B$ so that the map $\iota$ is simplicial. For each simplex $\tau$ of $\Sigma$ we let $\tau^{\bdy}$ denote the vertices of $\tau$ whose image under $\iota$ represent $\bdy$-compressions. If $\tau^{\bdy}=\emptyset$ then let $V_\tau=\Gamma(H)$.  Otherwise, let $V_\tau$ be the subspace of $\Gamma(H)$ spanned by the compressions that can be made disjoint from the disks represented by every vertex of $\iota(\tau^{\bdy})$. In other words, $V_\tau$ is the intersection of the link of $\tau^\bdy$ in $\Gamma(H;\bdy M)$ with $\Gamma(H)$. 

By Lemma \ref{l:CompressionEffect}, when $\tau^{\bdy} \ne \emptyset$ then $\Gamma(H/\tau^{\bdy})$ is precisely $V_\tau$. (More precisely, there is an embedding of $\Gamma(H/\tau^{\bdy})$ into $\Gamma(H)$ whose image is $V_\tau$.) By way of contradiction, we suppose the homotopy index of $\Gamma(H/\tau^{\bdy})$ is not less than or equal to  $n-{\rm dim}(\tau^{\bdy})$. Thus, $V_\tau \ne \emptyset$ and 
\begin{equation}
\label{e:contradictionRevisited}
\pi_i(V_\tau) =1 \mbox{ for all } i \le  n-1-{\rm dim}(\tau^{\bdy}). 
\end{equation}

\begin{clm}
\label{c:subsetRevisited}
Suppose $\tau$ is a cell of $\Sigma$ which lies on the boundary of a cell $\sigma$. Then $V_{\sigma} \subset V_{\tau}$. 
\end{clm}

\begin{proof}
Suppose $D \in V_{\sigma}$. Then $D$ can be made disjoint from disks represented by every vertex of $\iota(\sigma^{\bdy})$. Since $\tau$ lies on the boundary of $\sigma$, it follows that $\tau^{\bdy} \subset \sigma^{\bdy}$. Hence, $D$ can be made disjoint from the disks represented by every vertex of $\iota(\tau^{\bdy})$. It follows that $D \in V_\tau$. 
\end{proof}

Push the triangulation $\Sigma$ into the interior of $B$, so that $\rm{Nbhd}(\bdy B)$ is no longer triangulated (Figure \ref{f:SigmaDual}(b)). Then extend $\Sigma$ to a cell decomposition over all of $B$ by forming the product of each cell of $\Sigma \cap S$ with the interval $I$ (Figure \ref{f:SigmaDual}(c)). Denote this cell decomposition as $\Sigma'$. Note that $\iota$ extends naturally over $\Sigma'$, and the conclusion of Claim \ref{c:subsetRevisited} holds for cells of $\Sigma'$. Now let $\Sigma^*$ denote the dual cell decomposition of $\Sigma'$ (Figure \ref{f:SigmaDual}(d)). This is done in the usual way, so that there is a correspondence between the $d$-cells of $\Sigma ^*$ and the $(n-d)$-cells of $\Sigma'$. Note that, as in the figure, $\Sigma ^*$ is not a cell decomposition of all of $B$, but rather a slightly smaller $n$-ball, which we call $B'$. 

\begin{figure}
\psfrag{a}{(a)}
\psfrag{b}{(b)}
\psfrag{c}{(c)}
\psfrag{d}{(d)}
\begin{center}
\includegraphics[width=5 in]{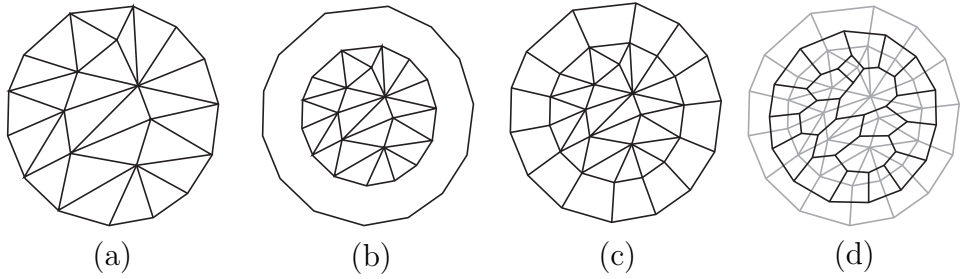}
\caption{(a) The triangulation $\Sigma$ of $B$. (b) Push $\Sigma$ into the interior of $B$. (c) Fill in $\rm{Nbhd}(\bdy B)$ with product cells to complete $\Sigma'$. (d) $\Sigma^*$ is the dual of $\Sigma'$.}
\label{f:SigmaDual}
\end{center}
\end{figure}

For each cell $\tau$ of $\Sigma'$, let $\tau^*$ denote its dual in $\Sigma^*$. Thus, it follows from Claim \ref{c:subsetRevisited} that if $\sigma^*$ is a cell of $\Sigma^*$ that is on the boundary of $\tau^*$, then $V_{\sigma} \subset V_\tau$. 

We now produce a contradiction by defining a continuous map $\Psi:B' \to \Gamma(H)$ such that $\Psi(\bdy B')=\iota(S)$. The map will be defined inductively on the $d$-skeleton of $\Sigma^*$ so that the image of every cell $\tau^*$ is contained in $V_\tau$. 

For each $0$-cell $\tau^* \in \Sigma^*$, choose a point in $V_\tau$ to be $\Psi(\tau^*)$. If $\tau^*$ is in the interior of $B'$ then this point may be chosen arbitrarily in $V_\tau$. If $\tau^* \in \bdy B'$ then $\tau$ is an $n$-cell of $\Sigma'$. This $n$-cell is $\sigma \times I$, for some $(n-1)$-cell $\sigma$ of $\Sigma \cap S$. But since $\iota(S) \subset \Gamma(H)$, it follows that $\tau^{\bdy}=\sigma ^{\bdy}=\emptyset$, and thus $V_\tau=\Gamma(H)$. We conclude $\iota(\tau) \subset V_\tau$, and thus we can choose $\tau^*$, the barycenter of $\iota(\tau)$, to be $\Psi(\tau^*)$. 

We now proceed to define the rest of the map $\Psi$ by induction. Let $\tau^*$ be a $d$-dimensional cell of $\Sigma^*$ and assume $\Psi$ has been defined on the $(d-1)$-skeleton of $\Sigma^*$. In particular, $\Psi$ has been defined on $\bdy \tau^*$. Suppose $\sigma^*$ is a cell on $\bdy \tau^*$.  By Claim \ref{c:subsetRevisited} $V_\sigma \subset V_\tau$. By assumption $\Psi|\sigma^*$ is defined and $\Psi(\sigma^*) \subset V_\sigma$. We conclude $\Psi(\sigma^*) \subset V_\tau$ for all $\sigma^* \subset \bdy \tau^*$, and thus
\begin{equation}
\label{e:boundary}
\Psi(\bdy \tau^*) \subset V_\tau.
\end{equation}

Note that  \[{\rm dim}(\tau) = n-{\rm dim}(\tau^*)=n-d.\] Since ${\rm dim}(\tau^{\bdy}) \le {\rm dim}(\tau)$, we have \[{\rm dim}(\tau^{\bdy}) \le n-d.\] Thus \[d \le n-{\rm dim}(\tau^{\bdy}),\] and finally \[d-1 \le n-1-{\rm dim}(\tau^{\bdy}).\]
It now follows from Equation \ref{e:contradictionRevisited} that $\pi_{(d-1)}(V_\tau)=1$. Since $d-1$ is the dimension of $\bdy \tau^*$, we can thus extend $\Psi$ to a map from $\tau^*$ into $V_\tau$. 

Finally, we claim that if $\tau^* \subset \bdy B'$ then this extension of $\Psi$ over $\tau^*$ can be done in such a way so that $\Psi(\tau^*) =\iota(\tau^*)$. This is because in this case each vertex of $\iota(\tau)$ is a compression, and hence $V_\tau=\Gamma(H)$. As $\iota(S) \subset \Gamma(H)=V_\tau$, we have $\iota(\tau^*) \subset V_\tau$. Thus we may choose $\Psi(\tau^*)$ to be $\iota(\tau^*)$.
\end{proof}

\begin{cor}
\label{c:FirstDistanceBound}
Suppose $M$ is a compact irreducible 3-manifold with incompressible boundary. Let $F$ denote a component of $\bdy M$. Let $H$ be a surface whose topological index is $n$. Then either $H$ is isotopic into a neighborhood of $\bdy M$ or there is a surface $H'$, obtained from $H$ by some sequence of (possibly simultaneous) $\bdy$-compressions, such that 
	\begin{enumerate}
		\item $H'$ has topological index at most $n$ with respect to $\bdy M$, and 
		\item The distance between $H \cap F$ and $H' \cap F$ is at most $3\chi(H')-3\chi(H)$.
	\end{enumerate}
\end{cor}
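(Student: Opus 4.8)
The plan is to iterate Theorem \ref{t:MainTheorem}, all the while tracking the Euler characteristic of the surface and the position of its trace on $F$ in the curve complex $\mathcal{C}(F)$.

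First I would set up the iteration. If $H$ is already isotopic into a neighborhood of $\bdy M$ there is nothing to prove, so assume not; by Lemma \ref{l:EssentialBoundary} (and its proof) we may discard any boundary-parallel disk components of $H$ without changing its disk complex, and any sphere components likewise, so --- $M$ being irreducible with incompressible boundary --- we may assume $H$ has no sphere or disk components. Now apply Theorem \ref{t:MainTheorem}. If $H$ has topological index at most $n$ with respect to $\bdy M$, set $H' = H$; the required distance is then $0 = 3\chi(H) - 3\chi(H)$ and we are done. Otherwise the theorem produces a simplex $\tau$ of $\Gamma(H;\bdy M)\setminus\Gamma(H)$ such that the topological index of $\Gamma(H/\tau)$ is at most $n - {\rm dim}(\tau)$; inspecting its proof, which works throughout with the face $\tau^{\bdy}$ of $\bdy$-compression vertices, one sees that $\tau$ may be taken to consist entirely of $\bdy$-compression disks. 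Replace $H$ by $H/\tau$, again discarding any boundary-parallel disk or sphere components that appear (which, by Lemma \ref{l:EssentialBoundary} and irreducibility, changes neither the disk complex nor the essential part of the surface up to isotopy), and repeat. Each replacement is a simultaneous $\bdy$-compression along ${\rm dim}(\tau)+1$ disjoint disks and leaves the surface topologically minimal of index at most $n$, with every boundary curve essential on $\bdy M$. Although each $\bdy$-compression raises $\chi$, a discarded disk can cancel this only when a boundary-parallel annulus component is cut off, which strictly lowers the number of components; so the pair consisting of $-\chi$ and the number of components of the surface decreases lexicographically at every step, and the process terminates. It ends either with a surface $H'$ of topological index at most $n$ with respect to $\bdy M$ --- the output we want --- or with the empty surface, in which case $H$ is built from a union of boundary-parallel disks by re-attaching bands lying in collars of $\bdy M$, and so is isotopic into a neighborhood of $\bdy M$, contrary to assumption.

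Next I would bound the distance in $\mathcal{C}(F)$ between $H\cap F$ and $H'\cap F$. Since the disks comprising each simplex $\tau$ are pairwise disjoint, the corresponding $\bdy$-compressions can be performed one at a time, and only those with boundary arc lying in $F$ affect the trace on $F$. Such a single $\bdy$-compression alters $H\cap F$ by a band move along an arc $\beta\subset F$ meeting $H\cap F$ only at its two endpoints; the curve systems before and after both lie in a regular neighborhood $N$ of their union in $F$, and running through the possibilities for $N$ (a thrice-punctured sphere or a once-punctured torus, so $N$ has an essential boundary curve disjoint from both systems --- unless $F$ is itself a torus, in which case a band move never changes the slope of the trace) shows that the two systems lie within distance $2$ in $\mathcal{C}(F)$. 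In the degenerate case where the band move merely splits off an inessential curve bounding one of the discarded boundary-parallel disks, the essential part of $H\cap F$ is in fact unchanged, and such a $\bdy$-compression has its $+1$ contribution to $\chi$ exactly cancelled by the $-1$ from discarding the disk it creates. Writing $k$ for the number of the remaining $\bdy$-compressions, we thus have $\chi(H')-\chi(H)=k$, while the triangle inequality bounds the distance between $H\cap F$ and $H'\cap F$ by $2k = 2\chi(H')-2\chi(H) \le 3\chi(H')-3\chi(H)$.

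The step I expect to be the main obstacle is this last one --- making the bookkeeping airtight. One has to confirm that iterating Theorem \ref{t:MainTheorem} genuinely yields honest $\bdy$-compressions rather than simplices that also contain ordinary compressions, to pin down exactly how the discarded trivial components interact with both the Euler-characteristic count and the curve-complex estimate, and to check the per-band-move bound in $\mathcal{C}(F)$ in every configuration --- for instance when $H\cap F$ has several components, only some of which meet $\beta$. Each of these topological ingredients is local and elementary; it is assembling them into the single clean bound $3\chi(H')-3\chi(H)$ that takes care.
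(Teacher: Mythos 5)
Your overall strategy---iterate Theorem~\ref{t:MainTheorem} to produce a chain of surfaces terminating in one of index $\le n$ with respect to $\bdy M$, and bound the drift of the trace on $F$ step by step via Euler characteristic---is the same as the paper's, and your observation that the relevant simplex may be taken to consist entirely of $\bdy$-compression disks is a correct reading of the proof of Theorem~\ref{t:MainTheorem}. The divergence, and the gap, lies in how the per-step distance estimate is obtained.

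You propose to decompose the simultaneous $\bdy$-compression along $\tau_i$ into single band moves on $F$ and argue that each band move changes the trace by at most $2$ in $\mathcal{C}(F)$. The per-move estimate via $\bdy N(\alpha\cup\beta)$ is sound \emph{when both curve systems before and after have an essential component}, but there is no reason an intermediate system $\gamma_j$ (the trace after performing only some of the disks in $\tau_i$) should be essential. Lemma~\ref{l:EssentialBoundary} applies only to topologically minimal surfaces, i.e.\ to the endpoints $H_i$ and $H_{i+1}=H_i/\tau_i$ of the step, not to the partial compressions $H_i/\sigma$ for a proper face $\sigma\subset\tau_i$. If some intermediate $\gamma_j$ consists entirely of inessential curves, the triangle-inequality chain from $\gamma_0$ to $\gamma_k$ breaks, and your bound $2k$ is not established. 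Choosing a clever order of the band moves might conceivably rescue this, but that is an additional nontrivial claim you would have to prove. The paper avoids the issue entirely: it never looks at intermediate surfaces, but instead pushes $H_i\cap F$ into the side with the fewer $\bdy$-compression disks to show $|H_i\cap H_{i+1}\cap F|\le 4p\le 2k$ directly, and then invokes Hempel's logarithmic bound $d\le 2+2\log_2|\alpha\cap\beta|$ (respectively $1+\log_2$ in the Farey graph). This yields $d\le 2+2\log_2(2k)\le 3k$ for $k\ge 2$ (and $\le 1$ for $k=1$), with no appeal to intermediate traces at all. You should replace the one-move-at-a-time argument with this direct intersection-number count, or else supply a genuine proof that the band moves can be ordered so that every intermediate trace retains an essential component.

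A smaller remark on bookkeeping: a $\bdy$-compression along an \emph{essential} arc cannot split off a disk component, so the discarding of boundary-parallel disks that you build into your lexicographic termination measure never actually occurs after the initial cleanup of $H$. Termination is already forced because each $\bdy$-compression along $\tau_i$ raises $\chi$ by $\dim(\tau_i)+1\ge 1$, and $\chi$ is bounded above; the extra component-count coordinate is unnecessary and somewhat obscures the argument.
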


Here we are measuring the {\it distance} between loops on $F$ by the path distance in either the Farey graph or the curve complex, depending on whether or not $F$ is a torus.

\begin{proof}
We first employ Theorem \ref{t:MainTheorem} to obtain a sequence of surfaces, $\{H_i\}_{0=1}^m$ in $M$, such that 
	\begin{enumerate}
		\item $H_0=H$
		\item $H_{i+1}=H_i/\tau_i$, for some simplex $\tau_i \subset \Gamma(H_i; \bdy M) \setminus \Gamma(H_i)$.
		\item For each $i$ the topological index of $H_{i+1}$ in $M$ is $n_{i+1} \le n_i-\rm{dim}(\tau_i)$, where $n_0=n$.
		\item $H_m$ has topological index at most $n_m$ with respect to $\bdy M$. 
	\end{enumerate}

It follows from Lemma \ref{l:EssentialBoundary} that $\bdy H_i$ contains at least one component that is essential on $\bdy M$ for each $i<m$. There is a component of the boundary of $H_m$ that is essential if and only if $H_m$ is not a collection of $\bdy$-parallel disks. However, if $H_m$ is a collection of $\bdy$-parallel disks, then the surface $H$ was isotopic into a neighborhood of $\bdy M$. Hence, it suffices to show that for all $i$, the distance between the loops of $H_i \cap F$ and $H_{i+1} \cap F$ is bounded by $3\chi(H_{i+1})-3\chi(H_i)$.

Let $\VV$ and $\WW$ denote the sides of $H_i$ in $M$. If the dimension of $\tau_i$ is $k-1$, then $H_{i+1}$ is obtained from $H_i$ by $k$ simultaneous $\bdy$-compressions. Hence, the difference between the Euler characteristics of $H_i$ and $H_{i+1}$ is precisely $k$. 

If $k=1$ then the loops of $\bdy H_i$ can be made disjoint from the loops of $\bdy H_{i+1}$. It follows that 
\[d(H_i \cap F, H_{i+1} \cap F) \le 1=k < 3k,\]
and thus the result follows. Henceforth, we will assume $k \ge 2$. 

Let $\{V_1,...,V_p\}$ denote the $\bdy$-compressions represented by vertices of $\tau_i$ that lie in $\VV$, and $\{W_1,...,W_q\}$ the $\bdy$-compressions represented by vertices of $\tau_i$ that lie in $\WW$. We will assume that $\VV$ and $\WW$ were labelled so that $p \le k/2$. The loops of $H_{i+1}\cap F$ are obtained from the loops of $H_i \cap F$ by band sums along the arcs of $V_i \cap F$ and $W_i \cap F$. By pushing the loops of $H_i \cap F$ slightly into $\VV$, we see that they meet the loops of $H_{i+1} \cap F$ at most $4p$ times. That is, \[|H_i \cap H_{i+1}\cap F| \le 4p \le 2k.\]

When $F$ is not a torus we measure the distance between curves $\alpha$ and $\beta$ in $F$ in its curve complex. By \cite{hempel:01}, this distance is bounded above by $2+2\log _2 |\alpha \cap \beta|$. Hence, the distance between $H_i \cap F$ and $H_{i+1} \cap F$ is at most $2+2\log_2 2k$. But for any positive integer $k$, $\log _2 2k \le k$. Hence, we have shown $d(H_i \cap F , H_{i+1} \cap F) \le 2+2k$. Since $k \ge 2$ it follows that  $2+2k \le 3k$, and thus the result follows.

When $F \cong T^2$, the distance between curves $\alpha$ and $\beta$ is measured in the Farey graph. In this case their distance is bounded above by $1+\log _2 |\alpha \cap \beta|$. As this bound is twice as good as before, the distance between $H_i \cap F$ and $H_{i+1}\cap F$ must satisfy the same inequality. 
\end{proof}

\section{Complicated amalgamations}
\label{s:DistanceBoundSection}

The results of this section are due to T. Li when the index of $H$ is zero or one \cite{li:08}. The arguments presented here for the more general statements borrow greatly from these ideas.

\begin{lem}
\label{l:EulerBound}
\cite{li:08} Let $M$ be a compact, orientable, irreducible 3-manifold with incompressible boundary. Suppose $H$ and $Q$ are properly embedded surfaces in $M$ that are both incident to some component $F$ of $\bdy M$, where $Q$ is incompressible and $\bdy$-incompressible, and every loop and arc of $H \cap Q$ is essential on both surfaces. Then one of the following holds:
	\begin{enumerate}
		\item There is an incompressible, $\bdy$-incompressible surface $Q'$ that meets $H$ in fewer arcs than $Q$ did. The surface $Q'$ is either isotopic to $Q$, or is an annulus incident to $F$. 
		\item The number of arcs in $H \cap Q$ that are incident to $F$ is at most $(1-3\chi(H))(1-3\chi(Q))$. 
	\end{enumerate}
\end{lem}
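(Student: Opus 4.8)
The plan is to reduce the statement to a parallelism count on the two surfaces. First I would record the standard fact that a collection of pairwise disjoint essential arcs in a compact surface $S$ with non-empty boundary falls into at most $1-3\chi(S)$ \emph{parallelism classes}, where two arcs are parallel if they cobound an embedded rectangle in $S$ whose remaining two sides lie in $\partial S$ (the degenerate cases, $S$ a disk or an annulus, being checked by hand). I apply this twice to the arcs of $H\cap Q$: once to the way they sit in $Q$, and once to the way they sit in $H$. The second application is legitimate precisely because every arc of $H\cap Q$ is essential on \emph{both} surfaces, which is part of the hypothesis. Thus each arc of $H\cap Q$ is assigned a pair (its class in $H$, its class in $Q$), and the set of possible pairs has size at most $(1-3\chi(H))(1-3\chi(Q))$. (Essential loops of $H\cap Q$ will play no role in the count, and I will always take the auxiliary disks below disjoint from them.)

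Now suppose conclusion (2) fails, so there are strictly more than $(1-3\chi(H))(1-3\chi(Q))$ arcs of $H\cap Q$ incident to $F$; call this set $\mathcal{A}$. By the pigeonhole principle, two arcs $\gamma',\gamma''\in\mathcal{A}$ are parallel in $H$ \emph{and} parallel in $Q$. Among all pairs of arcs of $\mathcal{A}$ that are parallel in $H$, I would then choose a pair whose rectangle $R_H\subset H$ is innermost, i.e.\ $\mathrm{int}(R_H)\cap Q=\emptyset$; write $\partial R_H=\gamma'\cup e_1\cup\gamma''\cup e_2$ with $e_1,e_2\subset\partial M$. A short argument shows that $\mathrm{int}(e_1)$ and $\mathrm{int}(e_2)$ are disjoint from $\partial Q$: the only way for an arc of $H\cap Q$ to meet $\mathrm{int}(e_i)$ would be to run from there into the interior of $H$, and since $e_i$ lies on the genuine boundary $\partial H$ that interior is $\mathrm{int}(R_H)$, which meets no arc of $H\cap Q$. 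Consequently $R_H$ can be used to modify $Q$ in a neighborhood of $\gamma'\cup\gamma''$ without creating any new intersection with $H$: the modification deletes the two intersection arcs $\gamma',\gamma''$ and band-sums $\partial Q$ along $e_1$ and $e_2$ inside $\partial M$.

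The effect of this modification is dichotomous, and here is the heart of the matter. If the parallelism of $\gamma'$ and $\gamma''$ in $Q$ is realized by a rectangle $P\subset Q$ lying on the side of $H$ compatible with $R_H$, then the modification is an ambient isotopy of $Q$, and I obtain $Q'$ isotopic to $Q$ meeting $H$ in two fewer arcs. Otherwise the modification tubes $Q$ to itself along $R_H$, and $R_H\cup P$ closes up to an embedded annulus $A$ with $\partial A\subset\partial M$ having a boundary component near $F$; I would take $Q'$ to be $A$ pushed slightly off $H$, which meets $H$ in strictly fewer arcs (near $R_H$ it is parallel to $H$, and along $\gamma',\gamma''$ it is tangent rather than transverse to $H$). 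In either branch, should the resulting $Q'$ fail to be incompressible and $\partial$-incompressible, I compress or $\partial$-compress it along disks chosen innermost with respect to $H$, so as not to increase the number of $H$-arcs; since $\partial M$ is incompressible and $M$ is irreducible, every such disk is boundary-parallel, so in the isotopy branch $Q'$ remains isotopic to $Q$, and in the tubing branch $A$ turns out to be $\partial$-parallel, in which case one may instead isotope $Q$ across that $\partial$-parallelism, landing back in the isotopy branch. Either way we reach conclusion (1).

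The step I expect to be the genuine obstacle is making the dichotomy of the third paragraph precise: deciding exactly when ``modify $Q$ using $R_H$'' is an honest isotopy and when it changes the topology of $Q$, and in the latter case verifying that the annulus extracted is the essential annulus asserted by the statement (or that it reverts to the isotopy case). Closely related, and equally fiddly, is the bookkeeping forced by the fact that the $H$-rectangle and the $Q$-rectangle of a doubly parallel pair need not be simultaneously innermost, so one must argue that a suitable pair \emph{can} be chosen with enough innermostness on both sides — all while certifying that no surgery ever \emph{increases} $|H\cap Q|$. The Euler-characteristic count and the small-surface conventions in the first paragraph are routine by comparison.
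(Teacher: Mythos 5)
Your proof has the same backbone as the paper's: the parallelism-class count, the pigeonhole giving a pair of arcs doubly parallel on $H$ and $Q$, and the dichotomy centered on the annulus $A=R_H\cup R_Q$. Your ``isotopy branch'' corresponds to the paper's case where $A$ is compressible, and your ``tubing branch'' to the case where $A$ is incompressible. However, there are two genuine gaps.

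\textbf{You never establish that $A$ is $\partial$-incompressible, and this is the crux of the lemma.} The paper's key step is this: $\alpha$ is essential on $Q$, and $Q$ is $\partial$-incompressible, so $\alpha$ is \emph{essential in $M$} (not $\partial$-parallel). Since $\alpha\subset A$, the annulus $A$ is automatically $\partial$-incompressible. This fact is what ensures the annulus produced in the tubing branch actually satisfies conclusion~(1), and it is also what controls the structure of $A$ in the compressible case. Your proposal instead says that if the resulting $Q'$ fails to be incompressible or $\partial$-incompressible you would ``compress or $\partial$-compress it along disks chosen innermost with respect to $H$.'' But compressing the annulus $A$ produces two disks, not an annulus, so this surgery carries you outside the class of surfaces that conclusion~(1) permits; the lemma requires that $Q'$ be either isotopic to $Q$ or an essential \emph{annulus}. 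There is no room to compress and stay in that class, so the automatic $\partial$-incompressibility of $A$ is not optional.

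\textbf{Your handling of the compressible case is wrong.} You assert that ``in the tubing branch $A$ turns out to be $\partial$-parallel, in which case one may instead isotope $Q$ across that $\partial$-parallelism.'' A compressible, $\partial$-incompressible annulus in an irreducible $M$ with incompressible boundary need not be $\partial$-parallel; it bounds a 1-handle, but that 1-handle can be knotted (think of the frontier of a regular neighborhood of a knotted properly embedded arc). Indeed $A$ contains $\alpha$, which is essential in $M$ precisely because it is \emph{not} $\partial$-parallel, so $A$ will typically be a non-$\partial$-parallel annulus bounding a knotted 1-handle. The paper's argument does not need $\partial$-parallelism: since $A$ is compressible, it bounds a 1-handle $V$, and one uses $V$ to guide an ambient isotopy carrying $R_Q$ across $V$ to $R_H$ (dragging along any other components of $Q\cap V$). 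This produces a surface isotopic to $Q$ meeting $H$ in at least two fewer arcs, with no $\partial$-parallelism assumption on $A$. Your version, as stated, silently assumes $A$ is unknotted and therefore does not close the loop back to the isotopy branch.

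A smaller point: you choose a pair with $R_H$ innermost among parallel-on-$H$ pairs, but that pair need not be parallel on $Q$, while the doubly-parallel pair produced by pigeonhole need not have $R_H$ innermost. You flag this tension yourself. The paper sidesteps it: it only needs $A$ to be embedded, i.e.\ $\mathrm{int}(R_H)\cap\mathrm{int}(R_Q)=\emptyset$, which is weaker than full innermostness and can be arranged by taking a maximal family of mutually doubly-parallel arcs and choosing two that are adjacent on $H$ within that family. Your surgery-first formulation needs the stronger $\mathrm{int}(R_H)\cap Q=\emptyset$ to keep $Q'$ embedded, which is harder to guarantee; working directly with $A$ avoids this.
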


\begin{proof}
If $H$ is not an annulus, then there can be at most $-3\chi(H)$ arcs in any collection of non-parallel essential arcs on $H$. When $H$ is an annulus, then there can be at most one arc in such a collection. Hence, in either case there can be at most $1-3\chi(H)$ non-parallel essential arcs. Similarly, there can be at most $1-3\chi(Q)$ arcs in any collection of non-parallel essential arcs on $Q$. Hence, if the number of arcs in $H \cap Q$ incident to $F$ is larger than $(1-3\chi(H))(1-3\chi(Q))$, then there must be at least two arcs $\alpha$ and $\beta$ of $H \cap Q$, incident to $F$, that are parallel on both $H$ and $Q$.  Suppose this is the case, and let $R_H$ and $R_Q$ denote the rectangles cobounded by $\alpha$ and $\beta$ on $H$ and $Q$, respectively. Note that $\alpha$ and $\beta$ can be chosen so that $A=R_H \cup R_Q$ is an embedded annulus. 

Since $\alpha$ is essential on $Q$ and $Q$ is $\bdy$-incompressible, it follows that $\alpha$ is essential in $M$. As $\alpha$ is also contained in $A$, we conclude $A$ is $\bdy$-incompressible. If $A$ is also incompressible then the result follows, as $A$ meets $H$ in fewer arcs than $Q$, and $A$ is incident to $F$. 

If $A$ is compressible then it must bound a 1-handle $V$ in $M$, since it contains an arc that is essential in $M$. The 1-handle $V$ can be used to guide an isotopy of $Q$ that takes $R_Q$ to $R_H$. This isotopy may remove other components of $Q \cap V$ as well. The resulting surface $Q'$ meets $H$ in at least two fewer arcs that are incident to $F$. 
\end{proof}

\begin{lem}
\label{l:SecondDistanceBound}
Let $M$ be a compact, orientable, irreducible 3-manifold with incompressible boundary, which is not an $I$-bundle. Suppose $H$ and $Q$ are properly embedded surfaces in $M$ that are both incident to some component $F$ of $\bdy M$, where $H$ is topologically minimal with respect to $\bdy M$, and $Q$ is an incompressible, $\bdy$-incompressible surface with maximal Euler characteristic. Then the distance between $H \cap F$ and $Q \cap F$ is at most $4+2(1-3\chi(H))(1-3\chi(Q))$.
\end{lem}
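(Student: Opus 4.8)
The plan is to bound the distance $d(H\cap F, Q\cap F)$ by interposing an intermediate surface $H'$ obtained from $H$ by $\bdy$-compressions, apply the triangle inequality, and control each piece. First I would apply Corollary \ref{c:FirstDistanceBound}: either $H$ is isotopic into a neighborhood of $\bdy M$ — a case which I would argue cannot occur, or can be dealt with separately, since $M$ is not an $I$-bundle and $Q$ is an essential surface that $H$ must be compared against — or there is a surface $H'$, obtained from $H$ by a sequence of (possibly simultaneous) $\bdy$-compressions, with topological index at most $n$ with respect to $\bdy M$, and with $d(H\cap F, H'\cap F) \le 3\chi(H')-3\chi(H)$. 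The point of passing to $H'$ is that it is now topologically minimal \emph{with respect to $\bdy M$}, so by Lemma \ref{l:EssentialIntersection} it can be isotoped to meet $Q$ in loops and arcs that are essential on both surfaces; this is exactly the hypothesis needed to feed into Lemma \ref{l:EulerBound}.

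Next I would run Lemma \ref{l:EulerBound} on the pair $(H', Q)$. If alternative (2) holds, the number of arcs of $H'\cap Q$ incident to $F$ is at most $(1-3\chi(H'))(1-3\chi(Q))$, and then since an outermost such arc on $Q$ is essential on $F$ (as $Q$ is $\bdy$-incompressible, its boundary is essential and the arc is essential), a band-sum/innermost argument gives $d(H'\cap F, Q\cap F) \le 2 + 2\log_2(\text{intersection number})$ via Hempel's bound, which one bounds by a linear function of $(1-3\chi(H'))(1-3\chi(Q))$. If instead alternative (1) holds, we get a new incompressible, $\bdy$-incompressible surface $Q'$ meeting $H'$ in strictly fewer arcs, with $Q'$ either isotopic to $Q$ or an annulus incident to $F$; here I would set up an induction on the number of arcs of $H'\cap Q$, noting that the annulus case does not decrease Euler characteristic below that of $Q$ (since $Q$ has maximal Euler characteristic among incompressible, $\bdy$-incompressible surfaces, an annulus certainly has $\chi = 0 \le$ that of any other such surface only if... — this needs care) and that the curves $Q'\cap F$ and $Q\cap F$ are either equal or at bounded distance. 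Iterating drives the arc count down until alternative (2) applies.

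Finally I would assemble the estimate via the triangle inequality:
\[
d(H\cap F, Q\cap F) \le d(H\cap F, H'\cap F) + d(H'\cap F, Q\cap F),
\]
where the first term is controlled by $-3\chi$ quantities from Corollary \ref{c:FirstDistanceBound} and the second by the output of Lemma \ref{l:EulerBound}, and then absorb everything into the claimed bound $4 + 2(1-3\chi(H))(1-3\chi(Q))$ using $\chi(H') \le \chi(H)$ (so $-3\chi(H') \ge -3\chi(H)$, which cuts the other way and needs to be handled by observing the $\bdy$-compressions only make $H'$ ``simpler'' in a way compatible with the final bound — more precisely, $H'$ is a union of pieces built from $H$, so $1-3\chi(H') $ is still controlled, or one re-expresses the bound in terms of $H'$ and then notes $H'\cap F$ can be taken isotopic to $H\cap F$ in the relevant subsurface).

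\textbf{The main obstacle} I anticipate is the bookkeeping in the case analysis of Lemma \ref{l:EulerBound}: making the induction on arc-count genuinely terminate and tracking how the distance on $F$ accumulates when $Q$ is replaced by a sequence of surfaces $Q = Q_0, Q_1, \dots$, some of which are annuli, while keeping the total additive error bounded by the single constant $4$ rather than something growing with the number of steps. The resolution should be that passing from $Q_j$ to $Q_{j+1}$ either leaves $Q_j\cap F$ unchanged (isotopy) or replaces it by a curve that is \emph{disjoint} from it (the annulus case, since the isotopy of $Q$ guided by the $1$-handle is supported away from, or can be made to meet $F$ controllably), so these steps contribute $0$ (or a bounded amount just once) to the distance; the only genuinely ``expensive'' comparison is the single application of the $\log_2$ intersection bound once the arc count is minimal, and that is where both the factor $2$ and the product $(1-3\chi(H))(1-3\chi(Q))$ enter. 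A secondary subtlety is justifying that $H$ is not isotopic into a neighborhood of $\bdy M$ in a way that makes the statement vacuous or trivial — presumably because such an $H$ would be compressible or boundary-parallel, contradicting topological minimality together with the standing hypotheses, or because in that degenerate case the conclusion holds trivially.
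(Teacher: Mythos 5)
You have misread the hypothesis of the lemma. Lemma~\ref{l:SecondDistanceBound} already assumes that $H$ is topologically minimal \emph{with respect to $\bdy M$}, so there is no need to invoke Corollary~\ref{c:FirstDistanceBound} to replace $H$ by an $\bdy$-compressed $H'$ with that property --- the $H$ you are handed is already the desired surface, and Lemma~\ref{l:EssentialIntersection} applies to it directly. In fact Corollary~\ref{c:FirstDistanceBound} is not even applicable here, since its hypothesis is that $H$ has topological index (in the absolute sense), not the relative one. What you have outlined is really the proof of Lemma~\ref{l:ThirdDistanceBound}, whose proof consists precisely of ``apply Corollary~\ref{c:FirstDistanceBound} to obtain $H'$, then apply Lemma~\ref{l:SecondDistanceBound} to $H'$, and combine via the triangle inequality.'' Proposing that argument as the proof of Lemma~\ref{l:SecondDistanceBound} is circular.

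The second, more substantive gap is in your treatment of alternative (1) of Lemma~\ref{l:EulerBound}. You propose an induction on the number of arcs, and worry (correctly) about how the distance on $F$ accumulates as $Q$ is replaced by a sequence of surfaces. But the paper sidesteps the induction entirely. If $Q$ is not an annulus, then (because $Q$ has maximal Euler characteristic among incompressible, $\bdy$-incompressible surfaces, and an annulus has Euler characteristic $0 > \chi(Q)$) alternative (1) cannot produce an annulus $Q'$; and once $Q$ is isotoped to minimize the number of essential arcs of $H\cap Q$, alternative (1) cannot produce a surface isotopic to $Q$ meeting $H$ in fewer arcs either. So alternative (2) holds outright, giving $d(H\cap F, Q\cap F) \le 2 + 2(1-3\chi(H))(1-3\chi(Q))$, with no iteration at all. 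If $Q$ \emph{is} an annulus, the paper does not iterate through Lemma~\ref{l:EulerBound}; it appeals to Lemma~3.2 of \cite{li:08} (using the hypothesis that $M$ is not an $I$-bundle), which says directly that $Q\cap F$ is within distance $2$ of $Q'\cap F$ for some incompressible, $\bdy$-incompressible annulus $Q'$ meeting $H$ minimally, and then applies alternative (2) to $Q'$. That is where the additive $4 = 2 + 2$ comes from --- not from controlling cumulative error over an induction. Your proposed resolution, that each replacement step keeps $Q_j\cap F$ either fixed or disjoint from its predecessor, is not justified: the annulus $A = R_H \cup R_Q$ produced by Lemma~\ref{l:EulerBound} has $A\cap F$ built from pieces of both $H\cap F$ and $Q\cap F$, and there is no reason its boundary curves should be disjoint from $Q\cap F$. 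Without the reference to Li's Lemma~3.2 (or a substitute for it) the annulus case remains genuinely unfinished.
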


\begin{proof}
By Lemma \ref{l:EssentialIntersection} $H$ and $Q$ can be isotoped so that they meet in a collection of loops and arcs that are essential on both surfaces. Assume first $Q$ is not an annulus, and it meets $H$ in the least possible number of essential arcs. Then by Lemma \ref{l:EulerBound}, the number of arcs in $H \cap Q$ incident to $F$ is at most $(1-3\chi(H))(1-3\chi(Q))$. As each such arc has at most two endpoints on $F$, \[|H \cap Q \cap F| \le 2(1-3\chi(H))(1-3\chi(Q)).\] 

As in the proof of Corollary \ref{c:FirstDistanceBound}, when $F$ is not a torus we measure the distance between curves $\alpha$ and $\beta$ in $F$ in its curve complex. By \cite{hempel:01}, this distance is bounded above by $2+2\log _2 |\alpha \cap \beta|$. So we have, \[d(H \cap F, Q \cap F) \le 2+2\log _2 2(1-3\chi(H))(1-3\chi(Q)).\] But for any positive integer $n$, $\log _2 2n \le n$. Hence,
\begin{eqnarray*}
d(H \cap F, Q \cap F) &\le& 2+2(1-3\chi(H))(1-3\chi(Q))\\
&\le & 4+2(1-3\chi(H))(1-3\chi(Q)).
\end{eqnarray*}

When $F \cong T^2$, the distance between curves $\alpha$ and $\beta$ is measured in the Farey graph. In this case their distance is bounded above by $1+\log _2 |\alpha \cap \beta|$. As this bound is twice as good as before, the distance between $H \cap F$ and $Q \cap F$ must satisfy the same inequality. 

If $Q$ is an annulus, then since $M$ is not an $I$-bundle we may apply  Lemma 3.2 of \cite{li:08}, which says that $Q \cap F$ is at most distance 2 away from $Q' \cap F$, for some incompressible, $\bdy$-incompressible annulus $Q'$ that meets $H$ in the least possible number of essential arcs. By Lemma \ref{l:EulerBound}, the number of arcs in $H \cap Q'$ incident to $F$ is at most $(1-3\chi(H))(1-3\chi(Q'))$. As above, this implies the distance between $H \cap F$ and $Q' \cap F$ is at most \[2+2(1-3\chi(H))(1-3\chi(Q')).\] It follows that the distance between $H \cap F$ and $Q \cap F$ is at most $4+2(1-3\chi(H))(1-3\chi(Q))$. 
\end{proof}

\begin{lem}
\label{l:ThirdDistanceBound}
Let $M$ be a compact, orientable, irreducible 3-manifold with incompressible boundary, which is not an $I$-bundle. Suppose $H$ and $Q$ are properly embedded surfaces in $M$ that are both incident to some component $F$ of $\bdy M$, where $H$ is topologically minimal and $Q$ is an incompressible, $\bdy$-incompressible surface with maximal Euler characteristic. Then either $H$ is isotopic into a neighborhood of $\bdy M$, or the distance between $H \cap F$ and $Q \cap F$ is at most $4+2(1-3\chi(H))(1-3\chi(Q))$.
\end{lem}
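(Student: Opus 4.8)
The plan is to reduce Lemma \ref{l:ThirdDistanceBound} to Lemma \ref{l:SecondDistanceBound} using Corollary \ref{c:FirstDistanceBound}, which converts a topologically minimal surface into one that is topologically minimal with respect to $\bdy M$ via a controlled sequence of $\bdy$-compressions. The point is simply that Lemma \ref{l:SecondDistanceBound} already handles surfaces that are topologically minimal \emph{with respect to} $\bdy M$, while here $H$ is only assumed topologically minimal, so we must bridge the gap.

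\begin{proof}
Let $n$ be the topological index of $H$. Apply Corollary \ref{c:FirstDistanceBound} to $H$ and the component $F$ of $\bdy M$. Either $H$ is isotopic into a neighborhood of $\bdy M$ (and we are done), or there is a surface $H'$, obtained from $H$ by a sequence of (possibly simultaneous) $\bdy$-compressions, such that $H'$ has topological index at most $n$ with respect to $\bdy M$, and
\[d(H \cap F, H' \cap F) \le 3\chi(H')-3\chi(H).\]
Since each $\bdy$-compression strictly decreases Euler characteristic (it removes a band), we have $\chi(H') > \chi(H)$ unless $H'=H$, but in fact what we need is the crude bound $3\chi(H')-3\chi(H) \le (1-3\chi(H)) - 1 \le (1-3\chi(H))$, using $\chi(H')\le 0$; more carefully, since $H'$ is obtained by $\bdy$-compressions it carries at least one essential boundary curve (by the discussion in the proof of Corollary \ref{c:FirstDistanceBound}), so $\chi(H') \le 1$, giving $3\chi(H')-3\chi(H) \le 3-3\chi(H) \le (1-3\chi(H))+2$. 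I would simply record whatever explicit bound the triangle inequality below tolerates.

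Next, $H'$ is incident to $F$: any $\bdy$-compression of a surface incident to $F$ leaves a surface still incident to $F$ (again this is used implicitly in Corollary \ref{c:FirstDistanceBound}, since otherwise $H'$ would be a collection of $\bdy$-parallel disks and $H$ would have been isotopic into a neighborhood of $\bdy M$, the case already dispatched). Now apply Lemma \ref{l:SecondDistanceBound} to $H'$ and $Q$: since $H'$ is topologically minimal with respect to $\bdy M$ and $Q$ is an incompressible, $\bdy$-incompressible surface of maximal Euler characteristic,
\[d(H' \cap F, Q \cap F) \le 4+2(1-3\chi(H'))(1-3\chi(Q)).\]
Finally, combine the two estimates with the triangle inequality in the curve complex (or Farey graph) of $F$:
\[d(H \cap F, Q \cap F) \le d(H \cap F, H' \cap F) + d(H' \cap F, Q \cap F).\]
Substituting and using that $\chi(H') \ge \chi(H)$ wait --- one must be careful of the direction --- since $\chi(H') \le 1$ we get $1-3\chi(H') \ge -2$, and $1-3\chi(H') \le 1-3\chi(H)$ because $\chi(H')\ge\chi(H)$ is false; rather $\chi(H')\le\chi(H)$ no. The correct bookkeeping: $\bdy$-compression decreases $\chi$, so $\chi(H')\le\chi(H)$, hence $1-3\chi(H')\ge 1-3\chi(H)$, and this goes the wrong way for a clean bound. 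So the genuine work is to recompute the constant, bounding $d(H\cap F, H'\cap F)$ by the $\chi$-difference and $d(H'\cap F, Q\cap F)$ in terms of $\chi(H')$, then re-expressing everything in terms of $\chi(H)$, absorbing the (bounded, since finitely many $\bdy$-compressions on a fixed-genus surface) drop from $\chi(H)$ to $\chi(H')$ into the additive constant ``$4$''. I expect the stated bound $4+2(1-3\chi(H))(1-3\chi(Q))$ to follow after this arithmetic, possibly after replacing $4$ by a slightly larger absolute constant that the authors have already folded in; in any case the structure of the argument --- Corollary \ref{c:FirstDistanceBound} plus Lemma \ref{l:SecondDistanceBound} plus the triangle inequality --- is what carries it.
\end{proof}

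The main obstacle I anticipate is exactly this constant-chasing: one must verify that the Euler characteristic lost under the $\bdy$-compressions of Corollary \ref{c:FirstDistanceBound} is controlled tightly enough that the sum of the two distance bounds still fits inside $4+2(1-3\chi(H))(1-3\chi(Q))$. This is not deep but requires care, since $\chi$ decreases along the $\bdy$-compressions while the target bound is phrased in terms of $\chi(H)$; the resolution is that $\chi(H')$ appears in a product with $(1-3\chi(Q))$ in Lemma \ref{l:SecondDistanceBound}, and the $\bdy$-compression distance $3\chi(H')-3\chi(H)$ is of lower order, so after the triangle inequality the dominant quadratic term is governed by $\chi(H')$ which is bounded below (in absolute value terms, above) by a quantity comparable to $\chi(H)$ up to the additive slack. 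Everything else --- incidence to $F$, topological minimality with respect to $\bdy M$, the dichotomy ``isotopic into a neighborhood of $\bdy M$ or not'' --- is handed to us directly by the two quoted results.
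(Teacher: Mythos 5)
Your outline---Corollary \ref{c:FirstDistanceBound} plus Lemma \ref{l:SecondDistanceBound} plus the triangle inequality---is exactly the paper's argument, so the structure is right. But the arithmetic you defer is the entire remaining content of the lemma, and you punt on it while expressing doubt that the constant $4$ survives; in fact it does, exactly, and checking this is essential. You also get the sign of the Euler characteristic change wrong at the point where you finally commit: a $\bdy$-compression cuts a surface along an essential arc and therefore \emph{increases} $\chi$ by one (you flip-flop several times but end with ``$\bdy$-compression decreases $\chi$, so $\chi(H')\le\chi(H)$,'' which is backwards). The correct inequality $\chi(H')\ge\chi(H)$ is what makes the bookkeeping close.

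Here is the computation you should have carried out. With $d(H\cap F,H'\cap F)\le 3\chi(H')-3\chi(H)$ and $d(H'\cap F,Q\cap F)\le 4+2(1-3\chi(H'))(1-3\chi(Q))$, the triangle inequality gives
\[
d(H\cap F,Q\cap F)\le 3\chi(H')-3\chi(H)+4+2(1-3\chi(H'))(1-3\chi(Q))
=6-3\chi(H)-3\chi(H')-6\chi(Q)+18\chi(H')\chi(Q).
\]
The $+3\chi(H')$ from the additive term partially cancels the $-6\chi(H')$ coming from expanding the product, leaving $-3\chi(H')$. Now use $\chi(H')\ge\chi(H)$ and $\chi(Q)\le 0$: these give $-3\chi(H')\le-3\chi(H)$ and $18\chi(H')\chi(Q)\le 18\chi(H)\chi(Q)$, so the right-hand side is at most $6-6\chi(H)-6\chi(Q)+18\chi(H)\chi(Q)=4+2(1-3\chi(H))(1-3\chi(Q))$. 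There is no constant-chasing and no need to enlarge ``$4$''; the bound is tight as stated. Your digression about $\chi(H')\le 1$ is irrelevant to the argument.
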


\begin{proof}
If $H$ is not isotopic into a neighborhood of $\bdy M$ then by Corollary \ref{c:FirstDistanceBound} we may obtain a surface $H'$ from $H$ by a sequence of $\bdy$-compressions, which is topologically minimal with respect to $\bdy M$, where \[d(H \cap F,H' \cap F) \le 3\chi(H')-3\chi(H).\]

By Lemma \ref{l:SecondDistanceBound}, 
\[d(H' \cap F, Q \cap F) \le 4+2(1-3\chi(H'))(1-3\chi(Q)).\]

Putting these together gives:
\begin{eqnarray*}
d(H \cap F,Q \cap F) & \le & d(H \cap F, H' \cap F) + d(H' \cap F, Q \cap F)\\
& \le & 3\chi(H')-3\chi(H) + 4+2(1-3\chi(H'))(1-3\chi(Q))\\
&=&6-3\chi(H) -3\chi(H')-6\chi(Q)+18\chi(H')\chi(Q)\\
&\le&6-3\chi(H) -3\chi(H)-6\chi(Q)+18\chi(H)\chi(Q)\\
&=&6-6\chi(H)-6\chi(Q)+18\chi(H)\chi(Q)\\
&=&4+2(1-3\chi(H))(1-3\chi(Q))
\end{eqnarray*}
\end{proof}

\begin{thm}
\label{t:DistanceBoundTheorem}
Let $X$  be a compact, orientable (not necessarily connected), irreducible 3-manifold with incompressible boundary, such that no component of $X$ is an $I$-bundle. Suppose some components $F_-$ and $F_+$ of $\bdy X$ are homeomorphic. Let $Q$ denote an incompressible, $\bdy$-incompressible (not necessarily connected) surface in $X$  of maximal Euler characteristic that is incident to both $F_-$ and $F_+$. Let $K=24(1-3\chi(Q))$. Suppose $\phi:F_- \to F_+$ is a gluing map such that \[d(\phi(Q \cap F_-), Q \cap F_+) \ge Kg.\] Let $M$ denote the manifold obtained from $X$ by gluing $F_-$ to $F_+$ via the map $\phi$. Let $F$ denote the image of $F_-$ in $M$. Then any closed, topologically minimal surface in $M$ whose genus is at most $g$ can be isotoped to be disjoint from $F$.
\end{thm}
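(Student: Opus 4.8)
The plan is to suppose, for contradiction, that $H$ is a closed topologically minimal surface in $M$ with $\gen(H) \le g$ that cannot be isotoped off $F$. First I would consider $H$ together with the surface $F$, which is incompressible in $M$ since $\bdy X$ is incompressible in $X$ and the gluing preserves this. Apply Theorem \ref{t:OriginalIntersection} to isotope $H$ so that it meets $F$ in at most $n \le \mathrm{index}(H)$ saddles, and so that $H \setminus N(F)$ has topological index plus (number of saddles) at most $n$. Since $\gen(H) \le g$, the index $n$ is controlled; more importantly, each component of $H \setminus N(F)$ is a topologically minimal (with respect to $\bdy$, after the reductions of Corollary \ref{c:FirstDistanceBound}) surface inside one of the pieces of $X$, with total genus bounded in terms of $g$. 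The hypothesis that $H$ cannot be isotoped off $F$ means $H \cap F \ne \emptyset$ after all such isotopies, so at least one component $H_X$ of $H \setminus N(F)$ sitting in $X$ is not isotopic into a neighborhood of $\bdy X$ and is incident to $F_-$ or $F_+$.

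Next I would feed $H_X$ into Lemma \ref{l:ThirdDistanceBound}: since $H_X$ is topologically minimal and not isotopic into a collar of $\bdy X$, and $Q$ is the incompressible, $\bdy$-incompressible surface of maximal Euler characteristic incident to $F_\pm$, we get
\[
d(H_X \cap F_-, Q \cap F_-) \le 4 + 2(1-3\chi(H_X))(1-3\chi(Q))
\]
and similarly at $F_+$ (using whichever of $F_-, F_+$ the relevant component meets; if $H_X$ is incident to both, we get both bounds). Now the key geometric point: the curves $H_X \cap F_-$ and $H_X \cap F_+$ are identified in $M$ along the glued surface $F$ — they are literally the same curves $H \cap F$ — so in $X$ the map $\phi$ must carry $H_X \cap F_-$ close to $H_X \cap F_+$. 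Combining the two distance estimates with the triangle inequality along $\phi$:
\[
d(\phi(Q \cap F_-), Q \cap F_+) \le d(\phi(Q\cap F_-), \phi(H_X \cap F_-)) + d(\phi(H_X\cap F_-), Q \cap F_+),
\]
and the first term equals $d(Q \cap F_-, H_X \cap F_-)$ since $\phi$ is an isometry of curve complexes, while the second is bounded by the $F_+$ estimate. So $d(\phi(Q\cap F_-), Q\cap F_+)$ is bounded above by something on the order of $8 + 4(1-3\chi(H_X))(1-3\chi(Q))$.

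The final step is bookkeeping: bound $1-3\chi(H_X)$ in terms of $g$. Since $H$ has genus at most $g$ and the reductions of Theorem \ref{t:OriginalIntersection} and Corollary \ref{c:FirstDistanceBound} only decrease complexity in a controlled way (compressions and $\bdy$-compressions increase $\chi$, saddles are few), one checks $1 - 3\chi(H_X) \le c\cdot g$ for a universal constant, and then the upper bound on $d(\phi(Q\cap F_-), Q\cap F_+)$ is at most $Kg = 24(1-3\chi(Q))g$ — the constant $24$ being chosen precisely to absorb these estimates. This contradicts the hypothesis $d(\phi(Q\cap F_-), Q\cap F_+) \ge Kg$, so no such $H$ exists. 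I expect the main obstacle to be the careful passage from the closed surface $H$ to a well-behaved piece $H_X$ in a single component of $X$: one must ensure that after cutting along $F$ and discarding $\bdy$-parallel disks, some surviving component is both genuinely topologically minimal (with respect to $\bdy$) and genuinely incident to the gluing surface and not collar-parallel — this is where Lemma \ref{l:EssentialBoundary}, Lemma \ref{l:EssentialIntersection}, and the index arithmetic of Theorem \ref{t:OriginalIntersection} all have to be marshalled together, and where the constant in the Euler-characteristic bound gets pinned down.
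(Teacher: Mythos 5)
Your proposal follows the paper's own argument step for step: isotope $H$ via Theorem~\ref{t:OriginalIntersection} so it meets $F$ in saddles with the complementary pieces topologically minimal, feed the piece in $X' \cong X$ into Lemma~\ref{l:ThirdDistanceBound} against $Q$ on each side, use the triangle inequality across $F$, and absorb the Euler-characteristic bookkeeping into the constant $24$.

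One small inaccuracy, which the paper handles with slightly more care: after Theorem~\ref{t:OriginalIntersection} the intersection $H \cap F$ is a $4$-valent graph (one vertex per saddle), not a disjoint union of simple closed curves, so the boundary curves of $H' = H \cap X'$ on $F_-$ and on $F_+$ are \emph{not} literally the same curves — they are the two sides of a regular neighborhood of this graph. What one actually gets is that they can be made disjoint, hence $d(H' \cap F_-, H' \cap F_+) \le 1$, and this extra $+1$ is precisely the ``$+1$'' in the paper's chain of inequalities before it is absorbed into $9(1-3\chi(Q)) \le K g$. Your bound of ``order $8 + 4(1-3\chi)(1-3\chi(Q))$'' silently drops that term; it does not affect the conclusion since the slack in $K = 24(1-3\chi(Q))$ is more than enough, but you should state the distance-$\le 1$ estimate rather than equality of the curve systems. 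Also, the paper works with the entire (possibly disconnected) surface $H' = H \cap X'$ rather than isolating a single component $H_X$: Theorem~\ref{t:OriginalIntersection} and Lemma~\ref{l:ThirdDistanceBound} both apply to disconnected surfaces, and singling out one component risks losing topological minimality, which is a property of the whole disk complex of $H'$ and need not pass to components. Using $H'$ wholesale avoids the ``marshalling'' you worry about at the end.
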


\begin{proof}
Suppose $H$ is a topologically minimal surface in $M$ whose genus is at most $g$. By Theorem \ref{t:OriginalIntersection} $H$ may be isotoped so that it meets $F$ transversally away from a collection of saddles, and so that the components of $H \setminus N(F)$ are topologically minimal in $M \setminus N(F)$. Note that $M \setminus N(F) = X'$, where $X' \cong X$. We denote the images of $F_-$ and $F_+$ in $X'$ by the same names. Let $H'=H \cap X'$. By Lemma \ref{l:EssentialBoundary}, $\bdy H'$ consists of essential loops on $\bdy X'$. When projected to $F$, these loops are all on the boundary of a neighborhood of the 4-valent graph $H \cap F$. It follows that the distance between $H' \cap F_-$ and $H' \cap F_+$ is at most one.

If $H$ could not have been isotoped to be disjoint from $F$, then the surface $H'$ can not be isotopic into a neighborhood of $\bdy X'$. We may thus apply Lemma \ref{l:ThirdDistanceBound} to obtain:

\begin{eqnarray*}
d(Q \cap F_-, Q \cap F_+) & \le & d(Q \cap F_-, H' \cap F_-)+ d(H' \cap F_-, H' \cap F_+)\\
&&+d(Q \cap F_+, H' \cap F_+)\\
&\le& 2(4+2(1-3\chi(H'))(1-3\chi(Q)))+1\\
& = & 4(1-3\chi(H'))(1-3\chi(Q))+9\\
& \le & 4(1-3\chi(H))(1-3\chi(Q))+9\\
&\le &4(1-3\chi(H))(1-3\chi(Q))+9(1-3\chi(Q))\\
&=&(13-12\chi(H))(1-3\chi(Q))\\
&\le&(24g-11)(1-3\chi(Q))\\
&\le&24g(1-3\chi(Q))
\end{eqnarray*}
\end{proof}

Theorem \ref{t:DistanceBoundTheorem} motivates us to make the following definition:

\begin{dfn}
An incompressible surface $F$ in a 3-manifold $M$ is a {\it $g$-barrier surface} if any incompressible, strongly irreducible, or critical surface in $M$ whose genus is at most $g$ can be isotoped to be disjoint from $F$. 
\end{dfn}

By employing Theorem \ref{t:DistanceBoundTheorem} we may construct 3-manifolds with any number of $g$-barrier surfaces. Simply begin with a collection of 3-manifolds and successively glue boundary components together by ``sufficiently complicated" maps.

\section{Generalized Heegaard Splittings}
\label{s:GHS}

In this section we define {\it Heegaard splittings} and {\it Generalized Heegaard Splittings}. The latter structures were first introduced by Scharlemann and Thompson \cite{st:94} as a way of keeping track of handle structures. The definition we give here is more consistent with the usage in \cite{gordon}.

\begin{dfn}
A {\it compression body} $\mathcal C$ is a manifold formed in one of the following two ways:
	\begin{enumerate}
		\item Starting with a 0-handle, attach some number of 1-handles. In this case we say $\bdy _- \mathcal C=\emptyset$ and $\bdy _+ \mathcal C=\bdy \mathcal C$. 
		\item Start with some (possibly disconnected) surface $F$ such that each component has positive genus. Form the product $F \times I$. Then attach some number of 1-handles to $F \times \{1\}$. We say $\bdy _- \mathcal C= F \times \{0\}$ and $\bdy _+ \mathcal C$ is the rest of $\bdy \mathcal C$. 
	\end{enumerate}
\end{dfn}

\begin{dfn}
\label{d:Heegaard}
Let $H$ be a properly embedded, transversally oriented surface in a 3-manifold $M$, and suppose $H$ separates $M$ into $\VV$ and $\WW$. If $\VV$ and $\WW$ are compression bodies and $\VV \cap \WW=\partial _+ \VV=\partial _+ \WW=H$, then we say $H$ is a {\it Heegaard surface} in $M$.
\end{dfn}

\begin{dfn}
The transverse orientation on the Heegaard surface $H$ in the previous definition is given by a choice of normal vector. If this vector points into $\VV$, then we say any subset of $\VV$ is {\it above} $H$ and any subset of $\WW$ is {\it below} $H$. 
\end{dfn}

%Not sure where to put this:
\begin{dfn}
Suppose $H$ is a Heegaard surface in a manifold $M$ with non-empty boundary. Let $F$ denote a component of $\bdy M$. Then the surface $H'$ obtained from $H$ by attaching a copy of $F$ to it by an unknotted tube is also a Heegaard surface in $M$. We say $H'$ was obtained from $H$ by a {\it boundary-stabilization along $F$}. The reverse operation is called a {\it boundary-destabilization} along $F$. 
\end{dfn}

\begin{dfn}
\label{d:GHS}
A {\it generalized Heegaard splitting (GHS)} $H$ of a 3-manifold $M$ is a pair of sets of transversally oriented, connected, properly embedded surfaces,  $\thick{H}$ and $\thin{H}$ (called the {\it thick levels} and {\it thin levels}, respectively), which satisfy the following conditions. 
	\begin{enumerate}
		\item Each component $M'$ of $M \setminus \thin{H}$ meets a unique element $H_+$ of $\thick{H}$. The surface $H_+$ is a Heegaard surface in $\overline{M'}$ dividing $\overline{M'}$ into compression bodies $\VV$ and $\WW$. Each component of $\bdy _- \VV$ and $\bdy _- \WW$ is an element of $\thin{H}$. Henceforth we will denote the closure of the component of $M \setminus \thin{H}$ that contains an element $H_+ \in \thick{H}$ as $M(H_+)$. 
		\item Suppose $H_-\in \thin{H}$. Let $M(H_+)$ and $M(H_+')$ be the submanifolds on each side of $H_-$. Then $H_-$ is below $H_+$ in $M(H_+)$ if and only if it is above $H_+'$ in $M(H_+')$.
		\item The term ``above" extends to a partial ordering on the elements of $\thin{H}$ defined as follows. If $H_-$ and $H'_-$ are subsets of $\bdy M(H_+)$, where $H_-$ is above $H_+$ in $M(H_+)$ and $H_-'$ is below $H_+$ in $M(H_+)$, then $H_-$ is above $H_-'$ in $M$.
	\end{enumerate}
\end{dfn}

\begin{dfn}
Suppose $H$ is a GHS of an irreducible 3-manifold $M$. Then $H$ is {\it strongly irreducible} if each element $H_+ \in \thick{H}$ is strongly irreducible in $M(H_+)$. The GHS $H$ is {\it critical} if each element $H_+ \in \thick{H}$ but exactly one is strongly irreducible in $M(H_+)$, and the remaining element is critical in $M(H_+)$. 
\end{dfn}

The strongly irreducible case of the following result is due to Scharlemann and Thompson \cite{st:94}. The proof in the critical case is similar. 

\begin{thm}
\label{t:IncompressibleThinLevels}{\rm (\cite{gordon}, Lemma 4.6)} 
Suppose $H$ is a strongly irreducible or critical GHS of an irreducible 3-manifold $M$. Then each thin level of $H$ is incompressible.
\end{thm}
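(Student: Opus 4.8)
The plan is to argue by contradiction: suppose some thin level $H_-$ of the GHS $H$ is compressible, and derive a contradiction with the strong irreducibility (resp. criticality) of one of the thick levels. The guiding principle is the classical Scharlemann--Thompson argument, which says that a compression for a thin level can always be pushed through the adjacent thick level, producing a weak reducing pair there; this contradicts strong irreducibility, and with a little more care contradicts criticality as well. So first I would fix a compressing disk $D$ for $H_-$ and choose it to be innermost/of least complexity in an appropriate sense. The disk $D$ lies on one side of $H_-$, say inside $M(H_+)$ where $H_-$ is (without loss of generality) above $H_+$; here $H_+$ is the unique thick level in that component, splitting $M(H_+)$ into compression bodies $\VV$ and $\WW$ with $H_- \subset \bdy_-\VV$.

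The key step is a standard fact about compression bodies: if $H_-$ is a component of $\bdy_-\VV$ and $D$ is a compression for $H_-$ disjoint from $\VV$ (i.e.\ lying on the $\bdy_-$ side), then $D$ can be used together with the compression-body structure of $\VV$ to find a compressing disk for $H_+$ on the $\VV$-side that is disjoint from $D$, and in fact disjoint from a ``vertical'' annulus $H_- \times I$ connecting $H_-$ to $H_+$. Running this on the $\WW$-side as well — using that $H_-$ is, by condition (2) of Definition \ref{d:GHS}, below the thick level $H_+'$ in the manifold $M(H_+')$ on the other side of $H_-$ — produces compressions $V$ for $H_+$ and $W$ for $H_+'$ that are ``parallel'' to $D$. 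If $H_+ = H_+'$ this already exhibits a weak reducing pair for $H_+$. The care needed is when $H_-$ is adjacent to two distinct thick levels; then one shows the relevant compression stays inside a single $M(H_+)$ by observing that $D$ together with the product region is disjoint from one of the two compression bodies abutting $H_-$, so the compressing disk it yields for the thick level in that region is genuinely a compression on one specified side. Either way, we get a weak reducing pair $(V,W)$ for some thick level, contradicting that it is strongly irreducible. In the critical case, one thick level $H_+$ is critical rather than strongly irreducible; here I would invoke the structure of Definition \ref{d:critical}: the weak reducing pairs produced from $D$ all lie on one side of the partition $C_0, C_1$ (they are all ``close to'' the thin level $D$ compresses), so one shows a compression for a \emph{thin} level gives a disk disjoint from every compression of $H_+$ in one of $C_0$ or $C_1$, collapsing that piece of the disk complex and contradicting criticality — this is exactly the kind of argument used for Lemma \ref{l:EssentialBoundary} above.

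The main obstacle I expect is the bookkeeping around the partial order and the two-sided nature of a thin level: one must carefully use conditions (2) and (3) of Definition \ref{d:GHS} to ensure that the disk $D$, living on a definite side of $H_-$, meets the adjacent thick level's compression body from the $\bdy_-$ side (so the compression-body lemma applies) rather than from the $\bdy_+ = H$ side. A secondary technical point is choosing $D$ to intersect the vertical product regions minimally so that the disks $V, W$ it spawns are honestly disjoint from each other and from $D$; this is where an innermost-disk/outermost-arc exchange argument enters. Once the disjointness is secured, the contradiction with Definitions \ref{d:TTStrongIrreducibility} and \ref{d:critical} is immediate, so these two technical points are the whole content of the proof. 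Since the excerpt tells us this is Lemma 4.6 of \cite{gordon} and attributes the strongly irreducible case to \cite{st:94}, I would present the argument at the level of indicating how the Scharlemann--Thompson proof extends to the critical case, rather than reproving everything from scratch.
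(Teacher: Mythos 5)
The paper does not actually prove this theorem: it cites Lemma 4.6 of \cite{gordon}, and records only that the strongly irreducible case is due to Scharlemann and Thompson \cite{st:94} and that the critical case is ``similar.'' So there is no proof in the paper to compare against; the assessment below is of your sketch on its own merits.

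Your high-level strategy --- argue by contradiction and turn a compression for a thin level into a weak reducing pair for an adjacent thick level --- is the right one and matches the Scharlemann--Thompson/Casson--Gordon philosophy. However, the mechanism you propose does not deliver that weak reducing pair. The ``push $\bdy D$ vertically through a product region'' construction produces a compression for the thick level \emph{on the opposite side of $H_-$ from $D$}, and running it on both sides gives compressions for the two different thick levels $H_+$ and $H_+'$. A weak reducing pair must consist of two disjoint compressions for the \emph{same} thick level on \emph{opposite} sides, and nothing in your construction supplies a disk on the side of $H_+$ away from $H_-$. Moreover, the tubes/product-regions you build lie in $M$, not inside a single $M(H_+)$, so the resulting disk is not a compression inside the correct compression body. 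The actual argument is internal to the piece $M(H_+)$ containing $D$: since $\bdy_-$ of a compression body is incompressible in that compression body, $D$ must cross $H_+$; one isotopes to minimize $|D \cap H_+|$ and extracts disjoint essential subdisks on both sides via innermost/nesting arguments. This is where the real content lies, and your sketch skips it.

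The critical case is also not addressed convincingly. Producing a disk disjoint from every disk in one of the classes $C_0$ or $C_1$ is not by itself a contradiction with Definition \ref{d:critical}: that definition only forbids weak reducing pairs \emph{between} $C_0$ and $C_1$, and you have not established which class your produced disk belongs to, nor that it is on the opposite side of $H_+$ from a disk in the other class. The analogy with Lemma \ref{l:EssentialBoundary} also does not transfer, because that lemma derives contractibility of $\Gamma(H)$ from a single disk disjoint from \emph{all} others, whereas the disk complex of a critical surface is non-contractible by design, so removing or coning off one piece is not automatically absurd. The critical case in \cite{gordon} needs a genuine generalization of the Casson--Gordon argument (e.g.\ a two-parameter sweepout/graphic analysis), and your sketch does not indicate that.
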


\section{Reducing GHSs}
\label{s:ReducingGHS}

\begin{dfn}
Let $H$ be an embedded surface in $M$.  Let $D$ be a compression for $H$. Let $\VV$ denote the closure of the component of $M \setminus H$ that contains $D$. (If $H$ is non-separating then $\VV$ is the manifold obtained from $M$ by cutting open along $H$.) Let $N$ denote a regular neighborhood of $D$ in $\VV$. To {\it surger} or {\it compress} $H$ along $D$ is to remove $N \cap H$ from $H$ and replace it with the frontier of $N$ in $\VV$. We denote the resulting surface by $H/D$. 
\end{dfn}

It is not difficult to find a complexity for surfaces which decreases under compression. Incompressible surfaces then represent ``local minima" with respect to this complexity. We now present an operation that one can perform on GHSs that also reduces some complexity (see Lemma 5.14 of \cite{gordon}).  Strongly irreducible GHSs will then represent ``local minima" with respect to such a complexity. This operation is called {\it weak reduction}.

\begin{dfn}
Let $H$ be a properly embedded surface in $M$. If $(D,E)$ is a weak reducing pair for $H$, then we let $H/DE$ denote the result of simultaneous surgery along $D$ and $E$.
\end{dfn}

\begin{dfn}
\label{d:PreWeakReduction}
Let $M$ be a compact, connected, orientable 3-manifold. Let $G$ be a GHS. Let $(D,E)$ be a weak reducing pair for some  $G_+ \in \thick{G}$. Define 
	\[T(H)=\thick{G} -\{G_+\} \cup \{G_+ / D, G_+ / E\},\ \mbox{and}\]
	\[t(H) =\thin{G} \cup \{G_+/DE\}.\]

A new GHS $H=\{\thick{H},\thin{H}\}$ is then obtained from $\{T(H), t(H)\}$ by successively removing the following:
	\begin{enumerate}
		\item Any sphere element $S$ of $T(H)$ or $t(H)$ that is inessential, along with any elements of $t(H)$ and $T(H)$ that lie in the ball that it (co)bounds. 
		\item Any element $S$ of $T(H)$ or $t(H)$ that is $\bdy$-parallel, along with any elements of $t(H)$ and $T(H)$ that lie between $S$ and $\bdy M$. 
		\item Any elements $H_+ \in T(H)$ and $H_- \in t(H)$, where $H_+$ and $H_-$ cobound a submanifold $P$ of $M$, such that $P$ is a product, $P \cap T(H)=H_+$, and $P \cap t(H)=H_-$. 
	\end{enumerate}

We say the GHS $H$ is obtained from $G$ by {\it weak reduction} along $(D,E)$.
\end{dfn}

The first step in weak reduction is illustrated in Figure \ref{f:WeakReduction}. 

        \begin{figure}[htbp]
        \psfrag{1}{$G_+/D$}
        \psfrag{2}{$G_+/E$}
        \psfrag{3}{$G_+/DE$}
        \psfrag{G}{$G_+$}
        \psfrag{E}{$E$}
        \psfrag{D}{$D$}
        \vspace{0 in}
        \begin{center}
       \includegraphics[width=3.5 in]{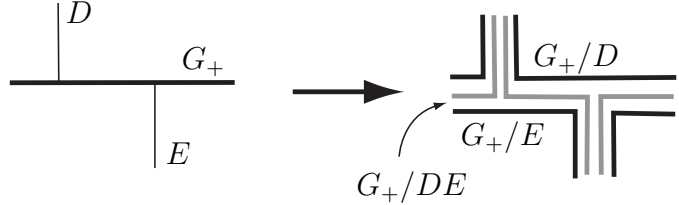}
       \caption{The first step in weak reduction.}
        \label{f:WeakReduction}
        \end{center}
        \end{figure}

\begin{dfn}
\label{d:destabilization}
The weak reduction of a GHS given by the weak reducing pair $(D,E)$ for the thick level $G_+$ is called a {\it destabilization} if $G_+/DE$ contains a sphere.
\end{dfn} 

In the next section we give a coarse measure of complexity for GHSs called {\it genus}. Destabilizations are precisely those weak reductions that reduce genus.

\section{Amalgamations}
\label{s:Amalgamations}

Let $H$ be a GHS of a connected 3-manifold $M$. In this section we use $H$ to produce a complex that is the spine of a Heegaard surface in $M$. We call this Heegaard surface the {\it amalgamation} of $H$. Most of this material is reproduced from \cite{gordon}. First, we must introduce some new notation. 

\begin{dfn}
Let $H$ be a Heegaard surface in $M$. Let $\Sigma$ denote a properly embedded graph in $M$. Let $(\bdy M) '$ denote the union of the boundary components of $M$ that meet $\Sigma$. Then we say $(\bdy M)' \cup \Sigma$ is a {\it spine} of $H$ if the frontier of a neighborhood of $(\bdy M)' \cup \Sigma$ is isotopic to $H$.
\end{dfn}

Suppose $H$ is a GHS of $M$ and $H_{+}\in \thick{H}$. Recall that $H_{+}$ is transversely oriented, so that we may consistently talk about those points of $M(H_+)$ that are ``above" $H_+$ and those points that are ``below." The surface $H_+$ divides $M(H_+)$ into two compression bodies. Henceforth we will denote these compression bodies as $\CV(H_+)$ and $\CW(H_+)$, where $\CV(H_+)$ is below $H_+$ and $\CW(H_+)$ is above. When we wish to make reference to an arbitrary compression body which lies above or below some thick level we will use the notation $\CV$ and $\CW$. Define $\partial _- M(H_+)$ to be $\partial _- \CV(H_+)$ and $\partial _+ M(H_+)$ to be $\partial _- \CW (H_+)$. That is, $\partial _- M(H_+)$ and $\partial _+ M(H_+)$ are the boundary components of $M(H_+)$ that are below and above $H_+$, respectively. If $N$ is a union of manifolds of the form $M(H_i)$ for some set of thick levels $\{H_i\} \subset \thick{H}$ then we let $\bdy _{\pm} N$ denote the union of those boundary components of $N$ that are components of $\bdy _{\pm} M(H_i)$, for some $i$. 

We now define a sequence of manifolds $\{M_i\}$ where
\[M_0 \subset M_1 \subset ... \subset M_n=M.\]
The submanifold $M_0$ is defined to be the disjoint union of all manifolds of the form $M(H_+)$, such that  $\partial _- M(H_+) \subset \bdy M$. (In particular, if $\bdy _-M(H_+)=\emptyset$ for some $H_+ \in \thick{H}$, then $M(H_+)\subset M_0$.) The fact that the thin levels of $H$ are partially ordered guarantees $M_0 \ne \emptyset$. Now, for each $i$ we define $M_i$ to be the union of $M_{i-1}$ and all manifolds $M(H_+)$ such that $\partial _- M(H_+) \subset \partial M_{i-1} \cup \bdy M$. Again, it follows from the partial ordering of thin levels that for some $i$ the manifold $M_i=M$. 

We now define a sequence of complexes $\Sigma_i$ in $M$. The final element of this sequence will be a complex $\Sigma$. This complex will be a spine of the desired Heegaard surface. The intersection of $\Sigma$ with some $M(H_+)$ is depicted in Figure \ref{f:Amalgam}.

Each $\CV \subset M_0$ is a compression body. Choose a spine of each, and let $\Sigma'_0$ denote the union of these spines. The complement of $\Sigma'_0$ in $M_0$ is a (disconnected) compression body, homeomorphic to the union of the compression bodies $\CW \subset M_0$. Now let $\Sigma_0$ be the union of $\Sigma'_0$ and one vertical arc for each component $H_-$ of $\partial _+ M_0$,  connecting $H_-$ to $\Sigma' _0$. 

We now assume $\Sigma _{i-1}$ has been constructed and we construct $\Sigma_i$. Let $M_i'=\overline{M_i - M_{i-1}}$. For each compression body $\CV \subset M_i'$ choose a set of arcs $\Gamma \subset \CV$ such that $\partial \Gamma \subset \Sigma _{i-1} \cap \partial M_{i-1}$, and such that the complement of $\Gamma$ in $\CV$ is a product. Let $\Sigma' _i$ be the union of $\Sigma _{i-1}$ with all such arcs $\Gamma$, and all components of $\bdy _- \CV$ that are contained in $\bdy M$. Now let $\Sigma_i$ be the union of $\Sigma'_i$ and one vertical arc for each component $H_-$ of $\partial _+ M_i$, connecting $H_-$ to $\Sigma' _i$. 

        \begin{figure}[htbp]
        \psfrag{h}{$H_+$}
        \psfrag{W}{$\CV(H_+)$}
        \psfrag{w}{$\CW(H_+)$}
        \psfrag{S}{$\Sigma$}
        \vspace{0 in}
        \begin{center}
       \includegraphics[width=3 in]{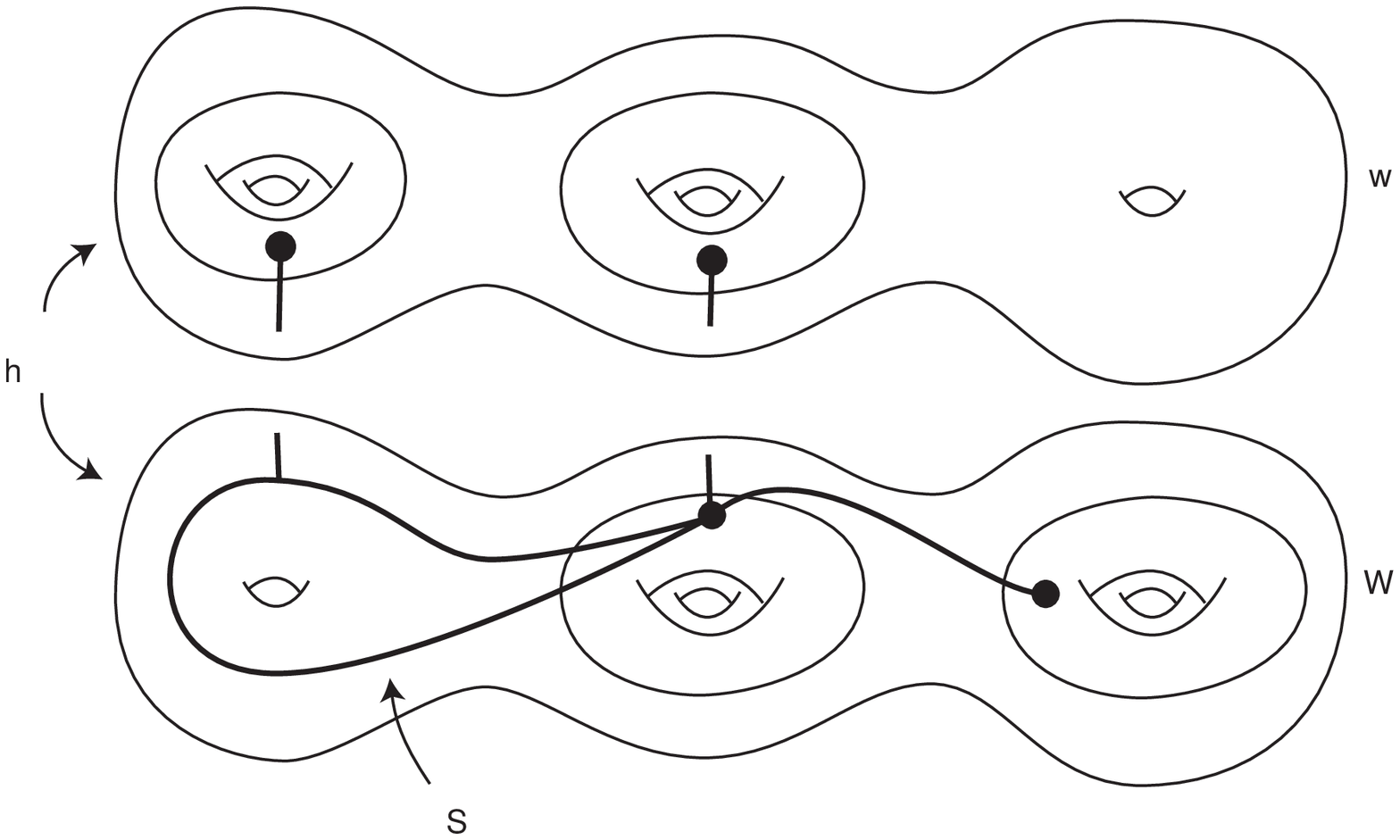}
       \caption{The intersection of $\Sigma$ with $\CV(H_+)$ and $\CW(H_+)$.}
        \label{f:Amalgam}
        \end{center}
        \end{figure}

\begin{lem}
{\rm (\cite{gordon}, Lemma 7.2)}  If $H$ is a GHS of $M$ then the complex $\Sigma$ defined above is the spine of a Heegaard surface in $M$.
\end{lem}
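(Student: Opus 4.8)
The plan is to exhibit $H^{\ast} := \mathrm{Fr}\big(N((\bdy M)' \cup \Sigma)\big)$ directly as a Heegaard surface, by checking that the two pieces into which it cuts $M$ are both compression bodies with $H^{\ast}$ as their common positive boundary. One piece is $N((\bdy M)' \cup \Sigma)$ itself. Since every component of $\bdy M$ that $\Sigma$ meets is in fact entirely contained in $\Sigma$ (it is one of the surfaces $\partial_-\CV$ spined into $\Sigma$ at the base stage), we have $(\bdy M)' \subset \Sigma$, so this neighborhood equals $N(\Sigma)$, and it is visibly a compression body: it deformation retracts onto $(\bdy M)' \cup \Sigma$, hence is obtained from $(\bdy M)' \times I$ by attaching one $1$-handle for each edge of the graph part of $\Sigma$ (a component of $\Sigma$ that misses $\bdy M$ contributes a handlebody). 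Thus $\partial_- N(\Sigma) = (\bdy M)'$ and $\partial_+ N(\Sigma) = H^{\ast}$, and the whole problem reduces to showing that $W := \overline{M \setminus N(\Sigma)}$ is a compression body with $\partial_+ W = H^{\ast}$ and $\partial_- W = \bdy M \setminus (\bdy M)'$.

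I would prove this by induction along the filtration $M_0 \subset M_1 \subset \dots \subset M_n = M$ together with the nested complexes $\Sigma_0 \subset \dots \subset \Sigma_n = \Sigma$, with the inductive hypothesis that $W_i := \overline{M_i \setminus N(\Sigma_i)}$ is a compression body whose positive boundary is $\mathrm{Fr}(N(\Sigma_i))$ and whose negative boundary is the union of the components of $\bdy M$ that lie on $\partial M_i$ and are not in $(\bdy M)'$; in particular every component of the frontier $\partial_+ M_i$ of $M_i$ in $M$ lies on $\partial_+ W_i$. For the base case, fix a component $M(H_+)$ of $M_0$: there $\CV(H_+)$ sits below $H_+$ and $\CW(H_+)$ above, $\partial_-\CV(H_+) = \partial_- M(H_+) \subset \bdy M$ is contained in $\Sigma_0$, and $\Sigma_0 \cap M(H_+)$ is a spine $S$ of $\CV(H_+)$ together with one vertical arc for each component of $\partial_-\CW(H_+) = \partial_+ M(H_+)$, each such arc running from its component up through $\CW(H_+)$, across $H_+$, and into $\CV(H_+)$ to meet $S$. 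Removing $N(S)$ replaces $\CV(H_+)$ by a collar of $H_+$, so $\overline{M(H_+) \setminus N(S)} \cong \CW(H_+)$ with its positive boundary pushed off of $H_+$; the vertical arcs are then spanning arcs of this compression body, and drilling a spanning arc out of a compression body again yields a compression body --- it simply deletes from $\partial_-$ the component met by the arc and connect-sums it onto $\partial_+$. Drilling all of them leaves a compression body with empty negative boundary and positive boundary $\mathrm{Fr}(N(\Sigma_0)) \cap M(H_+)$, so $W_0$ satisfies the hypothesis.

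For the inductive step, write $\overline{M_i \setminus M_{i-1}} = \bigsqcup_\alpha M(H_+^\alpha)$; by construction each component of $\partial_- M(H_+^\alpha)$ either lies on $\bdy M$ --- in which case it is appended to $\Sigma$ and so belongs to $(\bdy M)'$ --- or lies on $\partial_+ M_{i-1}$, which by the inductive hypothesis sits on $\partial_+ W_{i-1}$. The complex $\Sigma_i$ adds to $\Sigma_{i-1}$, for each $\alpha$: a family of arcs $\Gamma^\alpha \subset \CV(H_+^\alpha)$ with $\partial \Gamma^\alpha \subset \Sigma_{i-1}$ chosen so that $\overline{\CV(H_+^\alpha) \setminus N(\Gamma^\alpha)}$ is a product $\partial_-\CV(H_+^\alpha) \times I$ (the $\Gamma^\alpha$ being a dual spine of the compression body $\CV(H_+^\alpha)$ extended down to $\partial_-\CV(H_+^\alpha)$; such arcs exist by the handle structure of a compression body), the $\bdy M$-components of $\partial_-\CV(H_+^\alpha)$, and one vertical arc through $\CW(H_+^\alpha)$ for each component of $\partial_+ M_i$. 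Building $W_i$ from $W_{i-1}$: gluing each product region $\overline{\CV(H_+^\alpha) \setminus N(\Gamma^\alpha)} \cong \partial_-\CV(H_+^\alpha) \times I$ onto $W_{i-1}$ along the components of $\partial_-\CV(H_+^\alpha)$ that lie on $\partial_+ M_{i-1} \subset \partial_+ W_{i-1}$ merely enlarges a collar of $\partial_+ W_{i-1}$ and so preserves the homeomorphism type; then attaching $\CW(H_+^\alpha)$ along $H_+^\alpha$ and drilling the new vertical arcs --- again spanning arcs of the resulting compression body --- once more yields a compression body, by the fact used in the base case. Tracking which boundary surfaces are absorbed into $N(\Sigma_i)$, deleted from the negative boundary, or connect-summed into $\mathrm{Fr}(N(\Sigma_i))$ shows $W_i$ satisfies the inductive hypothesis; taking $i = n$ gives the result, so $H^{\ast}$ is a Heegaard surface with spine $(\bdy M)' \cup \Sigma$.

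The step I expect to be the real work is not a single hard idea but the bookkeeping: one must check that the partial ordering on $\thin{H}$ makes the filtration $\{M_i\}$ well-defined with $M_n = M$ and with the frontier of each $M_i$ equal to $\partial_+ M_i$, so that every piece $M(H_+^\alpha)$ is attached along the expected side, and one must follow carefully which boundary components of the various $\CV$'s and $\CW$'s land in $(\bdy M)'$, which remain in $\partial_- W$, and which get tubed into $H^{\ast}$, so that the inductive hypothesis is genuinely maintained at each stage. The only auxiliary facts needed are the standard structure theory of compression bodies --- existence of a dual spine with prescribed endpoints on $\partial_-$, and the fact that drilling a spanning arc preserves the class of compression bodies --- both of which are routine.
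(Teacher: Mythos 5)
The paper does not prove this lemma; it is quoted from \cite{gordon}, so there is no in-paper proof to compare against. That said, your strategy --- show directly that $N((\bdy M)'\cup\Sigma)$ and its exterior are both compression bodies, by induction along the filtration $M_0\subset\cdots\subset M_n=M$, using the two standard facts that drilling a spanning arc out of a compression body yields a compression body and that attaching a product along a boundary component does not change the homeomorphism type --- is exactly the argument one would expect, and the base case as you describe it is correct.

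There is, however, a concrete misstatement in the inductive step that propagates into a genuine gap. You assert that $\overline{\CV(H_+^\alpha)\setminus N(\Gamma^\alpha)}$ is a product $\partial_-\CV(H_+^\alpha)\times I$. This is wrong given the constraint on $\Gamma^\alpha$: since $\partial\Gamma^\alpha\subset\Sigma_{i-1}\cap\partial M_{i-1}\subset\partial_-\CV(H_+^\alpha)$, the union $\Gamma^\alpha\cup\partial_-\CV(H_+^\alpha)$ is a \emph{spine} of the compression body $\CV(H_+^\alpha)$, so removing a neighborhood of $\Gamma^\alpha$ leaves $\partial_+\CV(H_+^\alpha)\times I=H_+^\alpha\times I$, not $\partial_-\CV(H_+^\alpha)\times I$. (In the simplest case $\CV=F\times I$ with one $1$-handle, the arc $\Gamma$ runs from $F\times\{0\}$ up through the handle and back, and the complement has both boundary components of genus $g(F)+1=g(\partial_+\CV)$.) If one takes your stated homeomorphism at face value, the next sentence --- ``then attaching $\CW(H_+^\alpha)$ along $H_+^\alpha$'' --- cannot be carried out, because $H_+^\alpha$ is not a boundary component of $W_{i-1}\cup(\partial_-\CV\times I)$. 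The corrected identification $\cong H_+^\alpha\times I$ is what actually makes the step go through: the far boundary of the product is $H_+^\alpha$, and it is there that one proceeds to hang $\CW(H_+^\alpha)$ and drill the new vertical arcs.

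Related to this, the claim that the product is glued onto $W_{i-1}$ ``along the components of $\partial_-\CV(H_+^\alpha)$ that lie on $\partial_+ M_{i-1}$'' glosses over the fact that the gluing is not along full closed surfaces: the vertical arcs of $\Sigma_{i-1}$ and the arcs $\Gamma^\alpha$ of $\Sigma_i$ join end to end across $\partial_+ M_{i-1}$, so $W_{i-1}$ and $\overline{\CV\setminus N(\Gamma^\alpha)}$ meet only along $\partial_+ M_{i-1}\setminus N(\Sigma_i)$, a surface with boundary, and the remaining annuli $\mathrm{Fr}(N(\Gamma^\alpha))$, $\mathrm{Fr}(N(\text{vertical arcs}))$ join up to form new tubes in $\partial_+ W_i$. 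With the corrected product this still amounts to extending a collar of $\partial_+ W_{i-1}$, but the justification is not ``glued along a boundary component,'' and this is precisely the bookkeeping you flagged at the end; it needs to be written out rather than asserted. With those two points repaired the argument is sound.
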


\begin{dfn}
\label{d:amalgam}
Let $H$ be a GHS and $\Sigma$ be the complex in $M$ defined above. The Heegaard surface that $\Sigma$ is a spine of is called the {\it amalgamation} of $H$ and will be denoted $\amlg{H}$.
\end{dfn}

Note that although the construction of the complex $\Sigma$ involved some choices, its neighborhood is uniquely defined up to isotopy at each stage. Hence, the amalgamation of a GHS is well defined, up to isotopy. 

For the next lemma, recall the definition of {\it destabilization}, given in Definition \ref{d:destabilization}.

\begin{lem}
\label{l:AmalgGenus}
{\rm (\cite{gordon}, Corollary 7.5)}  Suppose $M$ is irreducible, $H$ is a GHS of $M$ and $G$ is obtained from $H$ by a weak reduction which is not a destabilization. Then $\amlg{H}$ is isotopic to $\amlg{G}$.
\end{lem}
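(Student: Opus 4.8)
The plan is to exploit the fact that the amalgamation operation of Section \ref{s:Amalgamations} is, essentially, the exact inverse of a non-destabilizing weak reduction. Concretely, I would argue that a weak reduction along a weak reducing pair $(D,E)$ for a thick level $H_+\in\thick{H}$ alters the GHS only inside the one submanifold $M(H_+)$ in which $D$ and $E$ live, and that inside $M(H_+)$ the amalgamation procedure reconstructs $H_+$ up to isotopy whenever no sphere is created.

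First I would record the local nature of the modification. Say $D\subset\CV(H_+)$ and $E\subset\CW(H_+)$. By Definition \ref{d:PreWeakReduction}, before the cleanup moves the new GHS $G$ has thick levels $(\thick{H}\setminus\{H_+\})\cup\{H_+/D,H_+/E\}$ and thin levels $\thin{H}\cup\{H_+/DE\}$, and every thick level of $H$ other than $H_+$, together with its compression bodies, survives untouched. The new thin surface $H_+/DE$ separates $M(H_+)$ into two submanifolds $M(H_+/D)$ and $M(H_+/E)$ in which $H_+/D$ and $H_+/E$ are Heegaard surfaces. Since the weak reduction is not a destabilization, $H_+/DE$ has no sphere component (Definition \ref{d:destabilization}); I would note that this also forces $H_+/D$ and $H_+/E$ to have no sphere component, since such a component could arise only by surgering a torus component of $H_+$ along a non‑separating curve, and in that case $H_+/DE$ would contain a sphere too. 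Consequently cleanup move (1) removes nothing new, while moves (2) and (3) only delete $\bdy$‑parallel pieces and trivial product regions; as in \cite{gordon}, the amalgamation "sees through" such a region — the spine construction of Section \ref{s:Amalgamations} contributes no compressing arcs there, only the vertical connecting arcs — so performing these moves does not change $\amlg{G}$.

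The crux is the local claim: inside $M(H_+)$, the amalgamation of the Heegaard surface $H_+/D$ of $M(H_+/D)$ with the Heegaard surface $H_+/E$ of $M(H_+/E)$, taken across $H_+/DE$, is isotopic to $H_+$. I would prove this by running the spine construction directly inside $M(H_+)$: collapse the compression body of $M(H_+/D)$ meeting $H_+/DE$ to a spine lying just off $H_+/D$, collapse the compression body of $M(H_+/E)$ meeting $H_+/DE$ to a spine lying just off $H_+/E$, and join both to $H_+/DE$ by vertical arcs. Removing a regular neighborhood of the resulting complex re‑tubes $H_+/D$ and $H_+/E$ across $H_+/DE$ along arcs dual to $D$ and $E$; because $D$ and $E$ are disjoint and lie on opposite sides of $H_+$, these are precisely the tubes that were cut in forming $H_+/D$, $H_+/E$, and $H_+/DE$ from $H_+$, so the re‑tubed surface is recovered from $H_+$ by the isotopy reversing the two surgeries. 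The only way this identification can fail is if a sphere component of $H_+/DE$ carries a meridian tube that becomes an unknotted handle and is absorbed, dropping the genus — and that is exactly the destabilization case, which is excluded. This step is essentially the content of the basic amalgamation lemmas of \cite{schultens:93} and \cite{gordon}.

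Finally I would globalize. Running the spine construction of Section \ref{s:Amalgamations} for $G$ along the partial order on $\thin{G}$, the resulting complex agrees with the one produced for $H$ outside $M(H_+)$, since $G$ and $H$ coincide there, while inside $M(H_+)$ the frontier of its neighborhood is isotopic to $H_+$ by the local claim; hence $\amlg{G}$ is isotopic to $\amlg{H}$. I expect the main obstacle to be the local claim — verifying carefully that the amalgamation spine inside $M(H_+)$ recreates exactly the surgery tubes and no extraneous ones, and pinning down the transparency of cleanup moves (2) and (3); both are routine but somewhat tedious bookkeeping with compression‑body spines of the kind carried out in \cite{gordon}.
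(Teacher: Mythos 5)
The paper does not prove this lemma: it is quoted verbatim from \cite{gordon} (Corollary~7.5), so there is no in‑paper argument to compare against. That said, your sketch is the standard one and is essentially what one finds in \cite{gordon}: a non‑destabilizing weak reduction is local to $M(H_+)$, and the amalgamation spine, when run across the newly introduced thin level $H_+/DE$, re‑creates exactly the two tubes dual to $D$ and $E$ and hence recovers $H_+$ up to isotopy; outside $M(H_+)$ the two GHSs and their spines coincide. Your observation that the hypothesis ``not a destabilization'' is what prevents a spherical component of $H_+/DE$ from allowing the tube to slip off and lower genus is precisely the right place where the hypothesis enters.

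The one place you are skating a bit fast is the treatment of the cleanup moves in Definition~\ref{d:PreWeakReduction}. Move~(1) is handled by your sphere remark, and move~(3) is genuinely transparent to the spine construction. But move~(2) removes not just a $\bdy$‑parallel surface $S$ but also \emph{everything between $S$ and $\bdy M$}, and the amalgamation over that deleted region is a Heegaard surface of a product; it is not automatic that this surface is trivial, and if it were stabilized its removal would change genus. What actually saves you is that the only new surfaces created by the weak reduction are $H_+/D$, $H_+/E$, and $H_+/DE$, and an analysis (using the classification of Heegaard splittings of $F\times I$ from \cite{st:93}, and again the no‑sphere hypothesis) shows the deleted region contributes nothing to the amalgamation. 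You flag this as ``routine bookkeeping,'' and it is, but it is the step most likely to hide an error, so in a full write‑up it deserves the same level of detail as your ``local claim.'' Overall the proposal is a faithful reconstruction of the cited proof.
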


It follows that if a GHS $G$ is obtained from a GHS $H$ by a weak reduction or a destabilization then the genus of $\amlg{G}$ is at most the genus of $\amlg{H}$.

\begin{dfn}
The {\it genus} of a GHS is the genus of its amalgamation. 
\end{dfn}

\begin{dfn}
Suppose $H$ is a GHS of $M$. Let $N$ denote a submanifold of $M$ bounded by elements of $\thin{H}$. Then we may define a GHS $H(N)$ of $N$. The thick and thin levels of $H(N)$ are the thick and thin levels of $H$ that lie in $N$. 
\end{dfn}

%\begin{lem}
%Suppose $H$ is a GHS of $M=X \cup_F Y$, where $F \in \thin{H}$. Then the genus of $H$ is equal to the sum of the genera of $H(X)$ and $H(Y)$, minus the genus of $F$. 
%\end{lem}

\begin{lem}
\label{l:GenusSum}
Suppose $H$ is a GHS of $M$, $F$ is an arbitrary subset of $\thin{H}$ in the interior of $M$, and $\{M_i\}_{i=1}^n$ are the closures of the components of $M \setminus F$. Then
\[\gen(H)=\sum \limits _{i=1} ^n \gen (H(M_i)) -\gen(F) +|F|-n+1.\]
\end{lem}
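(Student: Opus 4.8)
\textbf{Proof proposal for Lemma \ref{l:GenusSum}.}

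The plan is to compute the genus of $\amlg{H}$ directly from its spine, exploiting the fact that cutting along a subset $F$ of thin levels in the interior of $M$ essentially splits the spine construction into pieces. Recall that $\gen(H)$ is by definition the genus of the Heegaard surface $\amlg{H}$, which is the frontier of a regular neighborhood of the spine $\Sigma$ (together with the relevant boundary components of $M$). The key quantitative relationship is the standard one for a compression body $\CC$ with spine consisting of a graph $\Gamma$ attached to $\bdy_-\CC$: the genus of $\bdy_+\CC$ equals $\gen(\bdy_-\CC) + (\text{first Betti number of } \Gamma \text{ rel } \bdy_-\CC)$, adjusted for the number of components. Equivalently, for a connected Heegaard surface obtained by amalgamation, there is a formula expressing $1-\gen(\amlg H)$ (i.e.\ half the Euler characteristic, up to sign and a factor) additively in terms of the Euler characteristics of the thick levels, the thin levels, and correction terms counting components. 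So the first step is to record the ``local'' genus formula: for a single block $M(H_+)$ with thick level $H_+$ and thin boundary levels, one has a clean relation between $\gen(H_+)$, the genera of the components of $\bdy_\pm M(H_+)$, and the number of those components.

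Next I would set up the bookkeeping for the global amalgamation. Writing everything in terms of $1-\gen$ turns the compression-body relations into linear equations, and summing them over all blocks $M(H_+)$, $H_+ \in \thick{H}$, the thin-level contributions appear with multiplicity two (once from the block above, once from the block below, by condition (2) of Definition \ref{d:GHS}) — except for thin levels that lie on $\bdy M$, which appear once. Carrying out this sum and solving for $\gen(\amlg H)$ gives a master formula
\[
\gen(H) \;=\; \sum_{H_+ \in \thick{H}} \bigl(\gen(H_+) - 1\bigr) \;-\; \sum_{H_- \in \thin{H} \cap \mathrm{int}(M)} \bigl(\gen(H_-) - 1\bigr) \;+\; 1,
\]
or some mild variant thereof; the stated lemma is then just this master formula applied once to $M$ and once to each $M_i$, with the difference of the two sides being exactly the terms supported on $F$. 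Concretely, each thick level lies in exactly one $M_i$, and each thin level in $\mathrm{int}(M)$ either lies in exactly one $M_i$ or is a component of $F$ (hence now appears as a boundary thin level of one or two of the $M_i$). Subtracting $\sum_i \gen(H(M_i))$ from $\gen(H)$ cancels all thick-level terms and all interior thin-level terms except those of $F$, leaving precisely $-\gen(F) + |F| - n + 1$ after accounting for the $+1$ constants: there are $n$ of them on the right ($+n$ total, one per $M_i$) versus $1$ on the left, the discrepancy being absorbed into the $-n+1$, while the $|F| - \gen(F)$ is $\sum_{c \subset F}(1 - \gen(c))$, i.e.\ the interior-thin-level terms removed.

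Rather than re-deriving the master formula from scratch, it may be cleaner to argue the lemma directly by induction on $|F|$: for $|F| = 0$ the statement reads $\gen(H) = \gen(H)$, and the inductive step amounts to checking the case $|F| = 1$, i.e.\ that cutting along a single thin surface $F_0$ separating $M$ into $M_1, M_2$ satisfies $\gen(H) = \gen(H(M_1)) + \gen(H(M_2)) - \gen(F_0) + 1$. This single-surface case is exactly the behavior of genus under Schultens' amalgamation operation: amalgamating Heegaard surfaces of $M_1$ and $M_2$ along $F_0$ produces a surface whose genus is $\gen_1 + \gen_2 - \gen(F_0)$, and one checks that taking $\amlg{H}$ and taking the amalgamation of $\amlg{H(M_1)}$ with $\amlg{H(M_2)}$ along $F_0$ give isotopic surfaces in $M$ (both are neighborhoods of the same spine $\Sigma$, by construction of $\Sigma$ in Section \ref{s:Amalgamations}). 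So the heart of the matter is: (i) the genus-additivity of a single amalgamation along a connected surface, which is a direct Euler-characteristic computation on the spine, and (ii) verifying that the spine $\Sigma$ of $\amlg H$ restricts, on each side of $F_0$, to the spine of $\amlg{H(M_i)}$ — this follows because $F_0 \in \thin H$ and the construction of $\Sigma$ only attaches vertical arcs across thin levels, so no handles of $\Sigma$ cross $F_0$ except the vertical connecting arc, which is exactly the tube of the amalgamation.

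\textbf{Main obstacle.} The bookkeeping with the $|F| - n + 1$ correction term is where errors are easy to make: one must be careful that $\gen(F)$ and $\gen(H(M_i))$ are interpreted correctly when $F$ or the $M_i$ are disconnected (the genus of a disconnected surface, and the genus of a GHS of a disconnected manifold, must be additive-with-correction in the right way), and that every thin level of $H$ in $\mathrm{int}(M)$ is accounted for exactly once — either as an interior thin level of some single $M_i$, or as a component of $F$. Getting the constant exactly right (why $+1$ on the left but the net effect is $-n+1$) is the only genuinely delicate point; everything else is the standard Euler-characteristic arithmetic of compression bodies, which I would not spell out in full.
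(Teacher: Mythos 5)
Your inductive plan — reduce to $|F|=1$, then build up one component of $F$ at a time — is exactly the route the paper takes, and your spine-level justification (the spine $\Sigma$ of $\amlg{H}$ is $\Sigma(M_1)\cup\Sigma(M_2)$ joined by a single vertical arc, with one copy of $F_0$ removed) is the paper's core computation. However, there are two real problems with the way you execute the $|F|=1$ step.

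First, your stated separating formula is wrong: if $F_0$ separates $M$ into $M_1,M_2$, then $n=2$ and $|F|-n+1 = 0$, so the lemma asserts $\gen(H) = \gen(H(M_1)) + \gen(H(M_2)) - \gen(F_0)$, with no $+1$. You wrote $+1$. This is not just a typo, because it signals the second and more serious gap: you never treat the case where $F_0$ is \emph{non-separating}. That case genuinely occurs in the inductive step (when you attach the $M_i$ back together one thin level at a time, some of those gluings identify two boundary components of the \emph{same} intermediate piece). For non-separating $F_0$ you have $n=1$ and the lemma reads $\gen(H) = \gen(H(N)) - \gen(F_0) + 1$; the extra $+1$ arises because the vertical arc now joins the spine $\Sigma(N)$ to itself, creating a new handle, whereas in the separating case the arc merely connects two disjoint spines. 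The $+1$ you misplaced in the separating case belongs exactly here. The paper spells out both subcases separately for this reason. Your alternative ``master formula'' route would indeed avoid this case split, and the paper acknowledges it as a legitimate alternative, but it is only non-circular if you derive the master formula directly from the spine construction (you cannot deduce it from Corollary~\ref{c:GHSgenus}, which the paper proves \emph{from} this lemma), and you leave that derivation entirely unexplained.
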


\begin{proof}
The proof is by induction on $|F|$. Suppose first $F$ is connected, so that $|F|=1$. There are then two cases, depending on whether or not $F$ separates $M$. 

We first deal with the case where $F$ separates  $M$ into $M_1$ and $M_2$. In this case $|F|-n+1=1-2+1=0$, so we need to establish \[\gen(H)=\gen(H(M_1))+\gen(H(M_2))-\gen(F).\]

Let $\Sigma (M_1)$ and $\Sigma (M_2)$ denote spines of $\amlg{H(M_1)}$ and $\amlg{H(M_2)}$. Then $\Sigma (M_1)$ is the union of a properly embedded graph $\Sigma (M_1)' \subset M_1$ and $\bdy_- M_1$. If $M_1$ is above $M_2$ in $M$ then $F$ is a component of $\bdy _- M_1$. Let $(\bdy _-M_1)'=\bdy _-M_1 \setminus F$. 

To form the spine of $\amlg{H}$ we attach a vertical arc from $\Sigma (M_2)$ to  $\Sigma (M_1)' \cup (\bdy _- M_1)'$, through the compression body in $M_2$ that is incident to $F$. Hence, the graph part of the spine of $H$ comes from the graph parts of $\Sigma (M_1)$ and $\Sigma(M_2)$, together with an arc. The surface part only comes from the surface part of $\Sigma(M_2)$ and the surface parts of $\Sigma(M_1)$ other than $F$. See Figure \ref{f:AmlgSpine}. Hence, the spine of $\amlg{H}$ is obtained from $\Sigma(M_1) \cup \Sigma(M_2)$ by connecting with a vertical arc and removing a copy of $F$. The result thus follows. 

        \begin{figure}[htbp]
        \psfrag{X}{$M_1$}
        \psfrag{Y}{$M_2$}
        \psfrag{F}{$F$}
        \vspace{0 in}
        \begin{center}
       \includegraphics[width=3 in]{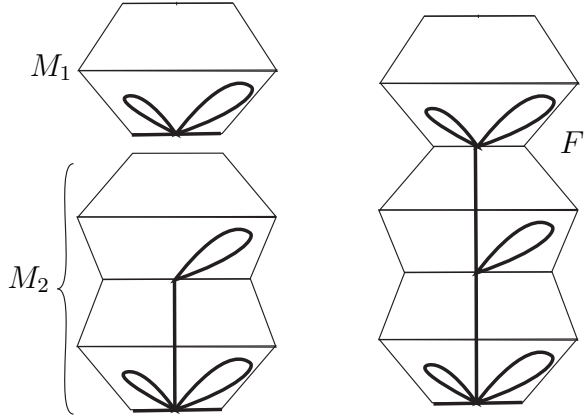}
       \caption{The spine of $\amlg{H}$ is obtained from $\Sigma(M_1) \cup \Sigma(M_2)$ by connecting with a vertical arc and removing a copy of $F$.}
        \label{f:AmlgSpine}
        \end{center}
        \end{figure}

We now move on to the case where $F$ is a connected, non-separating surface. Now $|F|-n+1=1-1+1=1$, so we need to establish \[\gen(H)=\gen(H(M_1))+\gen(H(M_2))-\gen(F)+1.\]
Let $N$ denote the manifold obtained from $M$ by cutting open along $F$. Let $\Sigma(N)$ denote the spine of $\amlg{H(N)}$. As in the separating case, the spine of $\amlg{H}$ is obtained from $\Sigma(N)$ by first removing a copy of $F$. This drops the genus by the genus of $F$. To complete the formation of the spine of $\amlg{H}$, we attach a vertical arc through the compression body incident to the other copy of $F$. As this arc connects what remains of $\Sigma(N)$ to itself, this increases the genus by one. 

To proceed from the case where $|F|=1$ to arbitrary values of $|F|$, simply note that $M$ can be successively built up from $\{M_i\}$ by attaching along one component of $F$ at a time. The result thus follows by an elementary induction argument.
\end{proof}

\begin{cor}
\label{c:GHSgenus}
Let $H$ be a GHS of $M$. Let $\thin{H}^\circ$ denote the subset of $\thin{H}$ consisting of those elements that lie in the interior of $M$. Then
\begin{eqnarray*}
\gen(H)&=&\sum \limits _{H_+ \in \thick{H}} \gen(H_+) -\sum \limits _{H_- \in \thin{H}^\circ} \gen(H_-) \\
&&+ |\thin{H}^\circ| - |\thick{H}|+1
\end{eqnarray*}
\end{cor}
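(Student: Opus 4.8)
\textbf{Proof proposal for Corollary \ref{c:GHSgenus}.}

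The plan is to derive this from Lemma \ref{l:GenusSum} by taking $F$ to be the entire set $\thin{H}^\circ$, so that the manifolds $\{M_i\}$ are exactly the pieces $M(H_+)$ for $H_+ \in \thick{H}$, and then evaluating each term of the formula in Lemma \ref{l:GenusSum} for this choice. First I would set $F = \thin{H}^\circ$ and observe that, by condition (1) of Definition \ref{d:GHS}, each component $M'$ of $M \setminus \thin{H}$ contains a unique thick level $H_+$, and $M' = M(H_+)$; moreover the closures of the components of $M \setminus \thin{H}^\circ$ are again exactly these pieces $M(H_+)$, since deleting the boundary-parallel thin levels does not separate off new pieces. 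Hence $n = |\thick{H}|$, and the GHS $H(M(H_+))$ induced on each piece has a single thick level $H_+$ and no interior thin levels, so $\gen(H(M(H_+))) = \gen(H_+)$.

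Next I would substitute these identifications into the formula of Lemma \ref{l:GenusSum}:
\[
\gen(H) = \sum_{H_+ \in \thick{H}} \gen(H_+) - \gen\!\left(\thin{H}^\circ\right) + |\thin{H}^\circ| - |\thick{H}| + 1.
\]
The only remaining point is to interpret $\gen(\thin{H}^\circ)$. Since $\thin{H}^\circ$ is a disjoint union of the connected surfaces $H_- \in \thin{H}^\circ$, and ``genus'' of a disconnected surface here is additive (this is the convention already in force, as is visible from the statement of Lemma \ref{l:GenusSum}, where $\gen(F)$ appears for a possibly disconnected $F$ alongside the correction term $|F| - n + 1$ that accounts for connectivity), we get $\gen(\thin{H}^\circ) = \sum_{H_- \in \thin{H}^\circ} \gen(H_-)$. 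Plugging this in yields exactly the claimed identity.

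I do not expect a genuine obstacle here; the content is entirely in Lemma \ref{l:GenusSum}, and this corollary is just the specialization to the maximal choice of $F$. The one place to be slightly careful is bookkeeping: making sure that deleting the boundary-parallel elements of $\thin{H}$ (those not in $\thin{H}^\circ$) really does leave the decomposition into pieces $\{M(H_+)\}$ unchanged, and that the induced GHS on each $M(H_+)$ has no interior thin levels — this follows because any thin level of $H$ lying in the interior of $M(H_+)$ would contradict condition (1) of Definition \ref{d:GHS}, which forces $\thin{H}$ to be exactly the boundary surfaces between the pieces. Once that is noted, the computation above is routine and the result follows.
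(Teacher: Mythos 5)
Your proposal is correct and follows exactly the same route as the paper: apply Lemma \ref{l:GenusSum} with $F = \thin{H}^\circ$, note that the resulting pieces are the $M(H_+)$ (so $n = |\thick{H}|$ and each $\gen(H(M(H_+))) = \gen(H_+)$), and use additivity of genus over components of $F$. One minor terminological slip: the elements of $\thin{H} \setminus \thin{H}^\circ$ are those lying \emph{on} $\bdy M$, not ``boundary-parallel'' ones, though this does not affect the argument.
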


\begin{proof}
Let $F$ be the union of all of the surfaces in $\thin{H}^\circ$, and apply Lemma \ref{l:GenusSum}. Note that there is one element of $\thick{H}$ in each component of the complement of $\thin{H}^\circ$. So the number of such components is precisely $|\thick{H}|$. 
\end{proof}

It should be noted that an alternative approach to the material in this section would be to first define the genus of a GHS to be that given by the formula in Corollary \ref{c:GHSgenus}. Lemma \ref{l:GenusSum} then follows from this definition fairly quickly. However, to prove equivalence to the definition given here, one would need an additional lemma that asserts that genus does not change under weak reductions that are not destabilizations.

\begin{lem}
\label{l:FparallelToThinLevelGHS}
Let $M$ be a 3-manifold which has a $g$-barrier surface $F$. Let $H$ be a genus $g$ strongly irreducible or critical GHS of $M$. Then $F$ is isotopic to a thin level of $H$.
\end{lem}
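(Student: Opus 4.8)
\textbf{Proof plan for Lemma \ref{l:FparallelToThinLevelGHS}.} The plan is to induct on the complexity of the GHS $H$, using weak reduction as the inductive step, with the base case being a GHS in which each thick level is incompressible in its piece. The key observation driving everything is that a strongly irreducible or critical GHS is assembled from surfaces which are either incompressible thin levels or strongly irreducible/critical thick levels, and the barrier hypothesis controls all three types of surface — but only when we look at them in the right ambient manifold. So the first step is to justify that we may work with the whole GHS at once: since $F$ is a $g$-barrier surface and $\gen(H)=g$, each thick level $H_+$ has genus at most $g$ (by Corollary \ref{c:GHSgenus}, since genus is additive in the relevant sense and each summand $\gen(H_+)$ is bounded by $\gen(H)$), and each thin level likewise has genus at most $g$; hence by Theorem \ref{t:IncompressibleThinLevels} every thin level is incompressible, and each is therefore isotopic off $F$.

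Next I would arrange, simultaneously, that every thin level of $H$ is disjoint from $F$ (using the barrier property on each, and a standard innermost-disk/outermost-arc argument to make the isotopies compatible so that no two thin levels are forced to cross), and then consider $F$ inside one component $M'$ of $M\setminus\thin{H}$. Within $\overline{M'}$ there is a single thick level $H_+$, which is strongly irreducible or critical in $\overline{M'}$. Here is the crux: I want to say $F$ is isotopic to be disjoint from $H_+$ inside $\overline{M'}$. If $F$ were disjoint from every thick level and every thin level, then $F$ lies in one compression body $\CV(H_+)$ or $\CW(H_+)$; an incompressible surface in a compression body is $\bdy$-parallel, so $F$ would be parallel to a component of $\bdy_-\CV(H_+)$ or $\bdy_-\CW(H_+)$, i.e.\ parallel to a thin level of $H$ (or to a component of $\bdy M$, which is excluded since $F$ is in the interior, or we handle separately), and we are done. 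So the whole lemma reduces to: \emph{$F$ can be isotoped off every thick level while remaining off every thin level.}

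To get $F$ off a thick level $H_+$, I would apply the barrier property to $H_+$ viewed in $M$ — but $H_+$ is only strongly irreducible/critical in $M(H_+)$, not necessarily in $M$, so the direct application of the definition of $g$-barrier surface is not immediate. The fix I expect is the same mechanism used throughout the paper: $H_+$ is topologically minimal in $M(H_+)$ with index $1$ or $2$, so by Theorem \ref{t:OriginalIntersection} (applied with the incompressible surface being (the relevant thin levels bounding $M(H_+)$, or) $F$ restricted appropriately) and the distance bounds of Section \ref{s:DistanceBoundSection}, $H_+$ can be isotoped off $F$ inside $M(H_+)$; one must check the genus bound $\gen(H_+)\le g$ feeds the barrier condition correctly. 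The main obstacle — and the step I would spend the most care on — is precisely this compatibility issue: making all the isotopies (of $F$ off each thin level, then off each thick level) coherent, so that pushing $F$ off one surface does not reintroduce intersections with another. The clean way is probably to run an induction on $\sum_{H_+}(\text{complexity of }H_+)$ via weak reductions: if some thick level is not yet incompressible in its piece, weak-reduce (this does not increase genus, by the remark after Lemma \ref{l:AmalgGenus}, and destabilizations only help), apply the inductive hypothesis to conclude $F$ is a thin level of the reduced GHS, and then trace $F$ back through the weak reduction — a weak reduction only adds thin levels and modifies thick levels, so a thin level of the reduced GHS that is not a product region boundary must already have been a thin level of $H$, while if $F$ ends up as one of the newly created thin levels $G_+/DE$ one derives a contradiction with strong irreducibility/criticality of the original $H_+$ by the usual no-weak-reducing-pair argument. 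In the base case all thick levels are incompressible, $F$ is disjoint from everything, and the compression-body argument above finishes it.
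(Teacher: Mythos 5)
Your core argument — isotope $F$ off all the thin levels, then off the thick level $H_+$ of the piece $M(H_+)$ containing it, land $F$ in a compression body $\CV(H_+)$ or $\CW(H_+)$, and conclude by $\bdy$-parallelism that $F$ is parallel to a thin level — is exactly the proof the paper gives. The two concerns that push you toward an induction are less serious than you suggest, and the proposed induction itself is structurally flawed.

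On the ``local versus global'' strong irreducibility worry: it has a standard resolution that does not require rerunning Theorem \ref{t:OriginalIntersection} or the distance estimates. A compression for $H_+$ in $M$ lies entirely on one side of $H_+$; the thin levels bounding $M(H_+)$ on that side are incompressible (Theorem \ref{t:IncompressibleThinLevels}), so an innermost-disk argument isotopes that compression into the corresponding compression body of $M(H_+)$. Because the two disks of a weak reducing pair lie in different complementary components of $H_+$, these isotopies do not interact and disjointness is preserved. It follows that the disk complex of $H_+$, and its weak-reducing-pair structure, are the same in $M$ as in $M(H_+)$; hence $H_+$ strongly irreducible (resp.\ critical) in $M(H_+)$ is strongly irreducible (resp.\ critical) in $M$, and the barrier hypothesis applies directly — which is precisely what the paper does.

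Your induction on weak reductions cannot get started. A strongly irreducible GHS has, by definition, no weak reducing pair at any thick level, so it admits no weak reduction at all; and your base case (all thick levels incompressible) never occurs, since an incompressible thick level would cobound a product with each adjacent thin level and would have been cancelled in step (3) of Definition \ref{d:PreWeakReduction}. A critical GHS does admit a weak reduction, but nothing guarantees the resulting GHS is strongly irreducible or critical, so the lemma's hypothesis — and with it your inductive hypothesis — need not apply to the reduced GHS. The coherence issue you raise (making the isotopies of $F$ off the various levels compatible) is a genuine subtlety that the paper leaves implicit, and it would deserve a sentence of justification in a fully written-out argument; but it is a local refinement of the direct proof, not a reason to replace it with the induction.
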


\begin{proof}
Since the genus of $H$ is $g$, it follows from Corollary \ref{c:GHSgenus} that the genus of every thick and thin level of $H$ is at most $g$. By Theorem \ref{t:IncompressibleThinLevels} we know that each thin level of $H$ is incompressible. Since $F$ is a $g$-barrier surface, it can be isotoped to be disjoint from every thin level. But then $F$ is contained in $M(H_+)$, for some thick level, $H_+$. The surface $H_+$ is either strongly irreducible or critical, so again since $F$ is a $g$-barrier surface it may be isotoped to be disjoint from $H_+$. The surface $F$ can thus be isotoped into a compression body, $\CC$.  But every incompressible surface in $\CC$ is parallel to some component of $\bdy _-\CC$. Each such component is a thin level of $H$. 
\end{proof}

\section{Sequences of GHSs}
\label{s:LastDefSection}

\begin{dfn}
A {\it Sequence Of GHSs} (SOG), $\{H^i\}$ of $M$ is a finite sequence such that for each $i$ either $H^i$ or $H^{i+1}$ is obtained from the other by a weak reduction.
\end{dfn}

\begin{dfn}
If $\bf H$ is a SOG and $k$ is such that $H^{k-1}$ and $H^{k+1}$ are obtained from $H^k$ by a weak reduction then we say the GHS $H^k$ is {\it maximal} in $\bf H$. 
\end{dfn}

It follows that maximal GHSs are larger than their immediate predecessor and immediate successor. 

Just as there are ways to make a GHS ``smaller", there are also ways to make a SOG ``smaller". These are called {\it SOG reductions}, and are explicitly defined in Section 8 of \cite{gordon}. If the first and last GHS of a SOG are strongly irreducible and there are no SOG reductions then the SOG is said to be {\it irreducible}. For our purposes, all we need to know about SOG reduction is that the maximal GHSs of the new SOG are obtained from the maximal GHSs of the old one by weak reduction, and the following lemma holds:

\begin{lem}
\label{l:maximalGHS}
{\rm (\cite{gordon}, Lemma 8.9)} Every maximal GHS of an irreducible SOG is critical. 
\end{lem}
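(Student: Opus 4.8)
The statement to prove is Lemma~\ref{l:maximalGHS}: every maximal GHS of an irreducible SOG is critical. Let me think about how to approach this.

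\medskip

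\textbf{Plan of proof.}
The overall strategy is a contrapositive/contradiction argument: suppose $G = H^k$ is a maximal GHS of a SOG $\mathbf{H}$, and suppose $G$ is \emph{not} critical; we will produce a SOG reduction, contradicting irreducibility of $\mathbf{H}$. Being maximal means that both neighbors $H^{k-1}$ and $H^{k+1}$ are obtained from $G$ by weak reductions, say along weak reducing pairs $(D_{-},E_{-})$ and $(D_{+},E_{+})$ respectively, each supported on some thick level (call them $G_{-}$ and $G_{+}$) of $G$. The content of the lemma is that the only way a thick level can ``see'' two genuinely different weak reductions of a SOG that cannot be amalgamated into a single move is for that thick level to be critical, in the sense of Definition~\ref{d:critical}.

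\medskip

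The key steps, in order, would be:
\begin{enumerate}
\item First reduce to the case that the two weak reducing pairs live on the \emph{same} thick level, i.e.\ $G_{-} = G_{+}$. If $G_{-} \neq G_{+}$, the two weak reductions are performed in disjoint submanifolds $M(G_{-})$ and $M(G_{+})$, so they commute; one checks that in this situation a SOG reduction is available (roughly, the diamond $H^{k-1} \leftarrow G \rightarrow H^{k+1}$ can be completed by a common weak reduction $H'$, giving a shorter ``square'' and hence a SOG reduction). So henceforth $G_{+} = G_{-} =: H_{+}$.
\item Next, analyze the two weak reducing pairs $(D_{-},E_{-})$ and $(D_{+},E_{+})$ for $H_{+}$ up to isotopy. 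If $(D_{-},E_{-})$ and $(D_{+},E_{+})$ can be made disjoint, or more generally if they lie in a common simplex-neighborhood structure allowing the two weak reductions to be realized by a single intermediate GHS, then again one gets a SOG reduction. The remaining case is that the two pairs are ``far apart'' in the disk complex.
\item Now suppose $H_{+}$ is not critical. Use Definition~\ref{d:critical}: non-criticality means the compressions for $H_{+}$ cannot be partitioned into $C_0, C_1$ as required. The goal is to use this to connect the pair $(D_{-},E_{-})$ to the pair $(D_{+},E_{+})$ by a sequence of ``elementary moves'' on weak reducing pairs, each of which corresponds to a SOG reduction or an isotopy of SOGs. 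Concretely, one wants to build a path in the relevant complex of weak reducing pairs from one pair to the other, and show each step of the path lets one shorten or simplify the SOG.
\item Finally, assemble: either we found a SOG reduction directly (contradicting irreducibility), or every obstruction to finding one is exactly an obstruction to partitioning the disk set as in Definition~\ref{d:critical}, forcing $H_{+}$ to be critical; since all other thick levels of $G$ are (by the strongly irreducible nature of the endpoints and the structure of weak reductions) strongly irreducible in their pieces, $G$ is a critical GHS.
\end{enumerate}

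\medskip

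\textbf{Main obstacle.} The hard part is Step~3: turning the combinatorial failure of the $C_0/C_1$ partition into an actual SOG reduction. This requires a careful case analysis of how two weak reducing pairs on a surface interact — when their disks intersect, one must understand the surgery surfaces $H_{+}/D_{-}E_{-}$ and $H_{+}/D_{+}E_{+}$ and the intermediate GHSs well enough to interpolate between them by legitimate moves. This is precisely the kind of delicate surface-and-disk combinatorics that the paper attributes to \cite{gordon}, so in practice I would invoke the machinery of Section~8 of \cite{gordon} (the definition of SOG reduction and its basic properties) and reduce the claim to showing: \emph{if $H_{+}$ is not critical, then any two weak reducing pairs on $H_{+}$ are connected by a sequence of pairs, consecutive ones sharing a disk or being disjoint.} That connectivity statement is the genuine mathematical core, and its proof is essentially the observation that the nerve/disk complex of a non-critical surface fails to have the two-block structure, so one can always ``slide'' from one weak reducing pair toward another while staying inside regions where SOG reductions apply. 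I would expect to spend most of the write-up making that sliding argument precise and checking it respects the bookkeeping (thin levels, the partial order, the cleanup operations of Definition~\ref{d:PreWeakReduction}).
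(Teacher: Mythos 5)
The paper does not actually give a proof here: Lemma~\ref{l:maximalGHS} is cited verbatim from \cite{gordon}, Lemma~8.9, with no argument supplied. So strictly speaking there is no ``paper's own proof'' to compare against, and your proposal must be judged against the argument in that reference. Judged that way, your outline does identify the correct overall strategy: a maximal GHS $H^k$ presents two weak reducing pairs, one for each direction; the first reduction is to the case that both pairs live on the same thick level $H_+$; and the dichotomy ``SOG reduction exists / $H_+$ is critical'' is governed by whether the two weak reducing pairs lie in the same component of the graph on compressions of $H_+$ whose edges are weak reducing pairs. Your restatement of non-criticality as a connectivity statement for that graph is exactly right, and it is the correct mathematical content of Step~3.

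That said, there are two genuine gaps beyond the deliberate deferral. First, in Step~4 you assert in passing that ``all other thick levels of $G$ are (by the strongly irreducible nature of the endpoints and the structure of weak reductions) strongly irreducible in their pieces.'' This does not follow from strong irreducibility of the first and last GHS in the SOG --- a GHS in the interior of the sequence could a priori have an additional thick level $H_+''$ admitting a weak reducing pair that is never used by the two adjacent weak reductions. Ruling this out requires an argument: one must show that such an ``unused'' weak reducing pair, being disjoint from the surgeries performed on $H_+$, also yields an SOG reduction, and that argument needs the actual definition of SOG reduction from \cite{gordon}, Section~8 (which this paper pointedly does not reproduce). Second, and relatedly, the ``sliding'' of Step~3 --- converting each elementary move in the connectivity graph into a legitimate SOG reduction that respects the bookkeeping of Definition~\ref{d:PreWeakReduction} --- is the technical heart of the lemma and is left entirely to citation. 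So what you have written is a correct plan, not a proof: it is faithful to the architecture of the argument in \cite{gordon}, but the two points above, especially the strong irreducibility of the other thick levels, need to be argued rather than asserted.
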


\begin{dfn}
The {\it genus} of a SOG is the maximum among the genera of its GHSs.
\end{dfn}

\begin{lem}
\label{l:GenusGoesDown}
If a SOG $\Lambda$ is obtained from an SOG $\Gamma$ by a reduction then the genus of $\Gamma$ is at least the genus of $\Lambda$. 
\end{lem}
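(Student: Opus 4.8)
The plan is to show that a SOG reduction cannot increase the genus, by showing it cannot increase the genus of any individual GHS in the sequence. Recall that the genus of a SOG is the maximum of the genera of its constituent GHSs, so it suffices to track how the genera of the individual GHSs behave under a single SOG reduction. First I would recall from Section 8 of \cite{gordon} the explicit list of SOG reductions, and observe that each one replaces a consecutive sub-sequence of GHSs by a (possibly shorter) sequence in which each new GHS is obtained from one of the old GHSs by either a weak reduction, or by the identity (i.e.~it is literally one of the old GHSs), or by the inverse of a weak reduction that was already present in the original SOG. The key point already recorded in the excerpt is that ``the maximal GHSs of the new SOG are obtained from the maximal GHSs of the old one by weak reduction.''

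The crucial input is the inequality stated in the excerpt immediately after Lemma \ref{l:AmalgGenus}: if $G$ is obtained from $H$ by a weak reduction or a destabilization, then $\gen(\amlg{G}) \le \gen(\amlg{H})$, i.e.~$\gen(G) \le \gen(H)$ after we adopt the definition of the genus of a GHS as the genus of its amalgamation. So weak reduction is genus-non-increasing on GHSs. Thus for each GHS $H'$ appearing in $\Lambda$, I would exhibit a GHS $H$ appearing in $\Gamma$ with $\gen(H') \le \gen(H)$: either $H'$ equals some $H$ in $\Gamma$ (genus unchanged), or $H'$ is a weak reduction of some $H$ in $\Gamma$ (genus drops by Lemma \ref{l:AmalgGenus} and the remark following it). The one case requiring care is when the reduction removes a GHS and reconnects its neighbors: here the GHS that survives in $\Lambda$ is one of the GHSs already present in $\Gamma$, or is obtained from such by weak reduction, so no new high-genus GHS is introduced. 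Taking the maximum over all GHSs of $\Lambda$, we get $\gen(\Lambda) = \max_{H' \in \Lambda} \gen(H') \le \max_{H \in \Gamma}\gen(H) = \gen(\Gamma)$.

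The main obstacle is purely bookkeeping: one must go through the list of SOG reduction moves in \cite{gordon} and verify in each case that every GHS of the reduced SOG $\Lambda$ is dominated, in genus, by some GHS of $\Gamma$ via a (possibly empty) chain of weak reductions. There is no subtle topology here beyond the already-quoted fact that weak reduction never increases the genus of a GHS (Lemma \ref{l:AmalgGenus} plus the destabilization case); the work is just confirming that SOG reductions are built out of weak reductions and re-indexings and hence inherit this monotonicity. I would therefore organize the proof as: (1) recall the definition of SOG reduction and the structural fact that its effect on GHSs is by weak reductions and deletions/re-indexings; (2) invoke genus-monotonicity of weak reduction on GHSs; (3) conclude that the maximum genus cannot rise.
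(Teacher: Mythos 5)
Your proposal has the right ingredients and will work, but it takes a slightly clumsier route than the paper's. You plan to dominate \emph{every} GHS of $\Lambda$ by some GHS of $\Gamma$, which forces you to analyze what a SOG reduction does to non-maximal GHSs as well; the excerpt only supplies the fact about what happens to the \emph{maximal} GHSs. The paper sidesteps this entirely with a preliminary observation you have all the pieces for but don't quite state: because weak reduction is genus-non-increasing and every GHS in a SOG descends from some maximal GHS by a chain of weak reductions, the genus of a SOG equals the maximum genus among its \emph{maximal} GHSs alone. With that observation in hand, one only needs the stated fact about maximal GHSs plus Lemma \ref{l:AmalgGenus}, and the proof is two lines; there is no need to track the non-maximal GHSs at all, and no bookkeeping case analysis of the SOG reduction moves. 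Your appeal to ``each new GHS is obtained from an old one by weak reduction, identity, or an inverse weak reduction already present'' is the part that would require going back through Section 8 of \cite{gordon} to justify, and it is exactly the part the paper's shorter argument renders unnecessary. So: same essential idea and same key lemma, but you should lead with the reduction to maximal GHSs rather than trying to control the whole sequence.
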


\begin{proof}
Since weak reduction can only decrease the genus of a GHS, the genus of a SOG is the maximum among the genera of its maximal GHSs. But if one SOG is obtained from another by a reduction, then its maximal GHSs are obtained from GHSs of the original by weak reductions. The result thus follows from Lemma \ref{l:AmalgGenus}.
\end{proof}

\begin{lem}
\label{l:FparallelToThinLevelSOG}
Let $M$ be a 3-manifold which has a $g$-barrier surface $F$. Let $\bf H$ be a genus $g$ irreducible SOG of $M$. Then $F$ is isotopic to a thin level of every element of $\bf H$. 
\end{lem}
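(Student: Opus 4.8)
The plan is to reduce Lemma \ref{l:FparallelToThinLevelSOG} to Lemma \ref{l:FparallelToThinLevelGHS} by passing to the maximal GHSs of the irreducible SOG $\bf H$. First I would recall, from the hypotheses, that $\bf H$ is a genus $g$ irreducible SOG; in particular its first and last GHS are strongly irreducible, and by definition of the genus of a SOG every GHS in the sequence has genus at most $g$. By Lemma \ref{l:maximalGHS}, every maximal GHS of $\bf H$ is critical. A critical GHS of genus at most $g$ in a manifold with a $g$-barrier surface falls under the hypotheses of Lemma \ref{l:FparallelToThinLevelGHS} (which covers both the strongly irreducible and critical cases), so $F$ is isotopic to a thin level of each maximal GHS of $\bf H$, and similarly of the strongly irreducible endpoints. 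So the content of the lemma is to propagate this conclusion from the maximal GHSs (and the endpoints) to all the GHSs of $\bf H$.

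The key step is therefore to track what happens to thin levels under a single weak reduction. If $G$ is obtained from a GHS $G'$ by weak reduction along a weak reducing pair $(D,E)$ for a thick level $G'_+$, then by Definition \ref{d:PreWeakReduction} the thin levels of $G$ are (essentially) those of $G'$ together with the new thin surface $G'_+/DE$, possibly with some elements removed because they became inessential spheres, became $\bdy$-parallel, or cobounded a product region with a thick level. Since $F$ is incompressible and is not a sphere or $\bdy$-parallel (it is a barrier surface, hence incompressible, and by the setup it is a genuine component of the interface, not a trivial surface), none of the clean-up operations in Definition \ref{d:PreWeakReduction} can delete a thin level isotopic to $F$; and the surgery $G'_+/DE$ producing a new thin level does not destroy any existing thin level. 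Consequently, if $F$ is isotopic to a thin level of $G$, then $F$ is isotopic to a thin level of $G'$; that is, ``$F$ is isotopic to a thin level'' is preserved under weak reduction in both directions for a $g$-barrier surface. I would prove this small claim carefully, as it is the crux of the argument.

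With that claim in hand, the proof is an easy induction along the sequence. Between any two consecutive maximal GHSs (and between an endpoint and the nearest maximal GHS) the SOG is a monotone chain of weak reductions descending from one maximal GHS and ascending to the next; since the maximal GHSs and endpoints all have $F$ as a thin level, and the property ``$F$ is a thin level'' is inherited by the GHS on the other side of a weak reduction, every GHS in each such chain also has $F$ as a thin level. Stringing these chains together covers all of $\bf H$.

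The main obstacle I anticipate is precisely the bookkeeping in the crucial claim: verifying that the three clean-up moves in Definition \ref{d:PreWeakReduction} cannot remove a thin level parallel to $F$, and that the newly created thin surface $G'_+/DE$ does not somehow interfere with the isotopy class of an existing thin level. This requires knowing that $F$, being a $g$-barrier surface, is incompressible and essential (not boundary-parallel, not a sphere), so none of steps (1)--(3) of Definition \ref{d:PreWeakReduction} apply to it; I would cite Theorem \ref{t:IncompressibleThinLevels} and the incompressibility built into the definition of a barrier surface to close this gap. Everything else is formal induction along the SOG.
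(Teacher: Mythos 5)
Your proof is correct and follows the same route as the paper: apply Lemma \ref{l:maximalGHS} and Lemma \ref{l:FparallelToThinLevelGHS} at the maximal GHSs (and the strongly irreducible endpoints), then propagate along each monotone chain by observing that weak reductions and destabilizations create but never destroy an incompressible, non-sphere, non-$\bdy$-parallel thin level; the paper's proof states this propagation step in one sentence where you spell it out via Definition \ref{d:PreWeakReduction}. One wording slip to fix: where you write ``Consequently, if $F$ is isotopic to a thin level of $G$, then $F$ is isotopic to a thin level of $G'$'' you have the implication backwards --- what your argument (and the induction) actually uses is that if $F$ is a thin level of the larger GHS $G'$, then it survives as a thin level of the smaller $G=G'$ weak-reduced; the converse is neither needed nor what you proved, so the claim of preservation ``in both directions'' should be dropped.
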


\begin{proof}
By Lemma \ref{l:maximalGHS} each maximal GHS of $\bf H$ is critical. Hence, by Lemma \ref{l:FparallelToThinLevelGHS} $F$ is isotopic  to a thin level of every maximal GHS of $\bf H$. But every other GHS of $\bf H$ is obtained from a maximal GHS by a sequence of weak reductions and destabilizations. Such moves may create new thin levels, but will never destroy an incompressible thin level. Hence, $F$ is isotopic to a thin level of every element of $\bf H$. 
\end{proof}

\section{Lower bounds on stabilizations.}
\label{s:CounterExamples}

\begin{lem}
\label{l:LowerBoundTheorem}
Let $\{F_i\}_{i=1}^n$ denote a collection of $g$-barrier surfaces in $M$. Let $\{M_k\}_{k=1}^m$ denote the closures of the components of $M-\cup F_i$. Let ${\bf H}=\{H^j\}$ denote an irreducible SOG of $M$. If $F_1$ is isotopic to a unique thin level of $H^1$ and $H^m$, but is oriented in opposite ways in each of these GHSs, then \[\gen({\bf H}) \ge \min \{g, \sum \limits _{k} \gen(M_k)-\sum \limits _{i \ne 1} \gen(F_i)+n-m+1\}.\]
\end{lem}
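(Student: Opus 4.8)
The plan is to argue by contradiction: assume $\gen(\mathbf{H}) < g$, and derive a lower bound on $\gen(\mathbf{H})$ from the structure of the SOG forced by the barrier surfaces, which will contradict the assumption that the bound in the statement fails. The hypothesis $\gen(\mathbf{H}) < g$ means $\mathbf{H}$ is a genus $g'$ irreducible SOG for some $g' < g$, so in particular every thick and thin level of every GHS in $\mathbf{H}$ has genus less than $g$ (by Corollary \ref{c:GHSgenus}), and $F_1$ together with every $F_i$ qualifies as a barrier to all the relevant levels. First I would invoke Lemma \ref{l:FparallelToThinLevelSOG}: each $F_i$ is isotopic to a thin level of \emph{every} GHS $H^j$ in $\mathbf{H}$. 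Since $F_1$ is isotopic to a \emph{unique} thin level of $H^1$ and of $H^m$ but with opposite transverse orientations, and the $F_i$ chop $M$ into the pieces $M_k$, I would then restrict attention to the GHSs $H^j(M_k)$ induced on the pieces.

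The core of the argument is to track what happens along the SOG to the thin level isotopic to $F_1$. In $H^1$ it is a thin level with one orientation; in $H^m$ it is a thin level with the opposite orientation. Along the SOG, weak reductions and destabilizations can create or destroy thin levels, but Lemma \ref{l:FparallelToThinLevelSOG} guarantees a copy of $F_1$ persists as a thin level throughout. The key point should be that to ``flip'' the orientation of the $F_1$-thin-level, some intermediate GHS must have \emph{two} parallel copies of $F_1$ as thin levels (with a thin region between them), or more precisely must fail to be in the normalized form where $F_1$ appears once — and reconciling this with the genus formula of Corollary \ref{c:GHSgenus} forces extra genus. Concretely: at the GHS where the orientation reversal is ``witnessed,'' restricting via Lemma \ref{l:GenusSum} / Corollary \ref{c:GHSgenus} to the decomposition of $M$ along $\cup F_i$ gives
\[
\gen(H^j) \;\ge\; \sum_k \gen\big(H^j(M_k)\big) - \sum_{i\ne 1}\gen(F_i) + n - m + 1,
\]
and since each $H^j(M_k)$ is a GHS of $M_k$ we have $\gen(H^j(M_k)) \ge \gen(M_k)$, where $\gen(M_k)$ denotes the Heegaard genus of $M_k$. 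This yields $\gen(\mathbf{H}) \ge \gen(H^j) \ge \sum_k \gen(M_k) - \sum_{i \ne 1}\gen(F_i) + n - m + 1$, which is exactly the second term in the minimum, contradicting $\gen(\mathbf{H}) < g$ together with the failure of the asserted inequality. (If instead $\gen(\mathbf{H}) \ge g$ the inequality holds trivially via the first term of the minimum.)

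I would organize the proof as: (1) dispense with the case $\gen(\mathbf{H}) \ge g$ immediately; (2) assume $\gen(\mathbf{H}) < g$, so all $F_i$ are $g'$-barrier surfaces for the relevant $g'$ and Lemma \ref{l:FparallelToThinLevelSOG} applies to all of them simultaneously; (3) identify an index $j$ along the SOG at which the $F_1$-level's orientation disagrees with its orientation in $H^1$ — since $H^1$ and $H^m$ disagree, and adjacent GHSs are related by a single weak reduction which cannot by itself reverse the orientation of an incompressible thin level without introducing an intervening parallel copy, some $H^j$ must contain (a configuration forcing) an extra parallel $F_1$; (4) apply Corollary \ref{c:GHSgenus} to $H^j$ with thin levels $\cup F_i$ (counting the extra parallel copy of $F_1$) to extract the genus lower bound, using $\gen(H^j(M_k)) \ge \gen(M_k)$ on each piece and noting the extra $F_1$ copy contributes a net $+\,\gen(F_1) - \gen(F_1) = 0$ from the thin-level sum but an extra $+1$ from the $|\thin{H}^\circ|$ count, or — more robustly — simply observing that the pieces $M_k$ are unchanged and the bookkeeping gives precisely the claimed sum.

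\textbf{Main obstacle.} The delicate step is (3): making precise the claim that an orientation reversal of the (unique) $F_1$-thin-level along a sequence of weak reductions/destabilizations forces an intermediate GHS to carry an extra parallel copy of $F_1$, and then showing this extra copy (or the thin product region it bounds) contributes a genuine $+1$ to the genus count in Corollary \ref{c:GHSgenus} rather than being cancelled by the clean-up moves (3) in Definition \ref{d:PreWeakReduction}. I expect this to require a careful local analysis of how transverse orientations on thin levels propagate under weak reduction — essentially an "intermediate value" argument on the SOG — together with the observation that the barrier property prevents the $F_1$-parallel region from being eliminated at the critical moment. This is where I would spend the bulk of the argument; the genus bookkeeping in step (4) is then a routine application of the formulas already established.
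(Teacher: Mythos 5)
You have the right high-level strategy — invoke Lemma \ref{l:FparallelToThinLevelSOG} to keep a copy of $F_1$ alive as a thin level throughout the SOG, argue that the orientation flip forces an intermediate $H^j$ to carry two parallel, oppositely-oriented copies of $F_1$ as thin levels, and then extract a genus bound. This matches the paper's skeleton. However, your bookkeeping in the "concretely" display is off by $\gen(F_1)$, and the step you dismiss as routine is where the paper's real work lies. If you cut $M$ along one thin level per $F_i$ (the decomposition "along $\cup F_i$" you write down), Lemma \ref{l:GenusSum} gives an \emph{equality}
\[
\gen(H^j)\;=\;\sum_k \gen\big(H^j(M_k)\big)\;-\;\sum_{i=1}^n \gen(F_i)\;+\;n-m+1,
\]
with $-\sum_{i=1}^n \gen(F_i)$, not $-\sum_{i\ne 1}\gen(F_i)$; after replacing $\gen(H^j(M_k))$ by $\gen(M_k)$ this is weaker than the claimed bound by exactly $\gen(F_1)$. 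The "more robust" alternative you offer ("the pieces $M_k$ are unchanged and the bookkeeping gives precisely the claimed sum") fails for the same reason.

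The missing idea is the following. At the intermediate $H^j$ you must cut along \emph{both} copies of $F_1$, not just one. This produces $n+1$ thin-level cuts and $m+1$ pieces: the $m$ pieces homeomorphic to the $M_k$ plus a product region $P\cong F_1\times I$ between the two oppositely-oriented copies. Lemma \ref{l:GenusSum} then gives
\[
\gen(H^j)\;=\;\sum_k \gen(\overline{M_k})\;+\;\gen(\overline{P})\;-\;\sum_{i\ne 1}\gen(F_i)\;-\;2\gen(F_1)\;+\;n-m+1,
\]
where $\overline{P}=\amlg{H^j(P)}$. To recover the lemma's bound you need $\gen(\overline{P})\ge 2\gen(F_1)$. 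This is exactly where the opposite orientations are used: because the two copies of $F_1$ are oriented oppositely, $\overline{P}$ is a Heegaard surface of $F_1\times I$ that does \emph{not} separate the two boundary components, so by Scharlemann--Thompson \cite{st:93} it is a (possibly stabilized) Type II splitting — two copies of $F_1$ joined by a tube — and hence has genus at least $2\gen(F_1)$. Your proposal never invokes \cite{st:93}, never identifies the product region $P$ as the thing to control, and frames the contribution of the extra $F_1$ copy as a "$+1$" in $|\thin{H}^\circ|$, which is not how the count works once the thick level(s) inside $P$ are included. Without the $\gen(\overline{P})\ge 2\gen(F_1)$ inequality, the argument does not close.
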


Here $\gen(M_k)$ denotes the minimal genus among all Heegaard surfaces in $M_k$. 

\begin{proof}
Assume $\gen({\bf H}) \le g$. By Lemma \ref{l:FparallelToThinLevelSOG} the surface $F_1$ is then isotopic to a thin level of every GHS of $\bf H$. Weak reduction can not simultaneously kill one thin level and create a new one, so it follows that for some $j$, there is a GHS $H^j$ of $\bf H$ where $F_1$ is isotopic to two thin levels, but oriented differently. Since these two thin levels are disjoint and isotopic to $F_1$, there must be a manifold $P$ between them homeomorphic to $F_1 \times I$. Let $\overline{P}=\amlg{H^j(P)}$. Then $\overline{P}$ is a Heegaard surface in $P$ that does not separate its boundary components. As $P$ is homeomorphic to $F_1 \times I$, it follows from \cite{st:93} that $\overline{P}$  is a stabilization of two copies of $F_1$, connected by a tube. Hence, $\gen(\overline{P}) \ge 2 \gen(F_1)$. 

By Lemma \ref{l:FparallelToThinLevelSOG}, for each $i$ there is a thin level of $H^j$ which is isotopic to $F_i$. For each $i \ne 1$ choose one such thin level, and call it $F_i^j$. If we cut $M$ along $\{F^j_i|i \ne 1\}$, and then remove the interior of $P$, we obtain a collection of manifolds homeomorphic to $\{M_k\}$. We denote this collection as $\{M^j_k\}$. For each $k$, let $\overline{M_k}=\amlg{H^j(M^j_k)}$. It thus follows from Lemma \ref{l:GenusSum} that

\begin{eqnarray*}
\gen({\bf H}) & \ge & \gen(H^j)\\
&=& \sum \limits _k \gen(\overline{M_k})-\sum \limits _{i \ne 1} \gen(F_i) +\gen(\overline{P}) - 2\gen(F_1)\\
&&\hspace{.5in} +(n+1)-(m+1)+1\\
&\ge &\sum \limits _k \gen(M_k)-\sum \limits _{i \ne 1} \gen(F_i)+n-m+1
\end{eqnarray*}
\end{proof}

In the next three theorems we present our counter-examples to the Stabilization Conjecture. 

\begin{thm}
\label{t:FlipCounterExample}
For each $n \ge 4$ there is a closed, orientable 3-manifold that has a genus $n$ Heegaard surface that must be stabilized at least $n-2$ times to become equivalent to the Heegaard surface obtained from it by reversing its orientation. 
\end{thm}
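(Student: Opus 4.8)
The plan is to build the manifold as a complicated gluing of two pieces along a high-genus surface, so that the interface surface becomes a $g$-barrier surface for a suitably large $g$, and then apply Lemma \ref{l:LowerBoundTheorem} to the SOG connecting a Heegaard surface to its orientation-reverse. First I would fix a genus $n$ and choose two compact, orientable, irreducible 3-manifolds $M_1$ and $M_2$, neither an $I$-bundle, each having a boundary component homeomorphic to a fixed surface $F$ of genus $n-1$ (or thereabouts), chosen so that $M_1$ and $M_2$ each have a minimal-genus Heegaard surface of genus $n$ that is ``flippable'' only after many stabilizations — more precisely, so that $\gen(M_1)+\gen(M_2)-\gen(F)+1$ is large compared to $n$. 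The idea is that the amalgamation $H$ of these two genus-$n$ splittings across $F$ has genus $n$ (using the arithmetic of Lemma \ref{l:GenusSum}/Corollary \ref{c:GHSgenus}, which forces $\gen(F)$ to be essentially $n-1$ so the total comes out to $n$), while any attempt to isotope $H$ to its flip must cross the barrier $F$.

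Next I would invoke Theorem \ref{t:DistanceBoundTheorem}: choosing the gluing map $\phi\colon F_-\to F_+$ between the relevant boundary components of $X=M_1\sqcup M_2$ so that $d(\phi(Q\cap F_-),Q\cap F_+)\ge Kg$ for the incompressible $\bdy$-incompressible surface $Q$ of maximal Euler characteristic, we get that $F$ is a $g$-barrier surface in $M=M_1\cup_\phi M_2$, where $g$ can be taken as large as we like (at the cost of making $\phi$ more complicated). In particular we may take $g$ larger than the quantity $\sum_k\gen(M_k)-\sum_{i\ne 1}\gen(F_i)+n-m+1$ appearing on the right of Lemma \ref{l:LowerBoundTheorem}; here the collection of barrier surfaces is just the single surface $F$, so $n=1$, $m=2$, and the sum reduces to $\gen(M_1)+\gen(M_2)-1$. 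Then suppose $H$ could be made equivalent to its orientation-reverse $\overline H$ after $s$ stabilizations. Stabilizations and their inverses, together with isotopies, assemble into an irreducible SOG $\bf H$ from $H$ to $\overline H$ whose genus is $n+s$ (this uses the standard translation, as in \cite{gordon}, between stabilization sequences and irreducible SOGs; the first and last GHS are the strongly irreducible — indeed Heegaard — splittings $H$ and $\overline H$). Since $F$ is isotopic to a unique thin level of both $H$ and $\overline H$ but with opposite transverse orientations (this is exactly what reversing the orientation of the amalgamation does to the $F$-thin level), Lemma \ref{l:LowerBoundTheorem} applies and gives $\gen({\bf H})\ge\min\{g,\ \gen(M_1)+\gen(M_2)-1\}$. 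Choosing $g$ large, this is $\gen(M_1)+\gen(M_2)-1$, so $n+s\ge\gen(M_1)+\gen(M_2)-1$, i.e.\ $s\ge\gen(M_1)+\gen(M_2)-1-n$.

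Finally I would do the arithmetic of picking $M_1$ and $M_2$ to make the bound come out to exactly $n-2$. The cleanest choice is to take $M_1=M_2$ with $\gen(M_i)=n$ and $\gen(F)=n-1$, arranged so that the amalgamation of the two genus-$n$ splittings across $F$ indeed has genus $n$ — by Corollary \ref{c:GHSgenus} this amalgamation has genus $\gen(H_1)+\gen(H_2)-\gen(F)=n+n-(n-1)=n+1$, so in fact I should instead take the two pieces to have Heegaard genus $n-1$ with a genus-$(n-1)$ boundary, or more carefully calibrate so the amalgamation is genus $n$; the point is that $\gen(M_1)+\gen(M_2)-1$ should equal $2n-2$ while the amalgamation $H$ has genus $n$, yielding $s\ge 2n-2-n=n-2$. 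One must also check the amalgamation $H$ is itself a genuine Heegaard surface (this is the cited Lemma of \cite{gordon}), and that it is unstabilized or at least that its genus is $n$ as claimed. The main obstacle I anticipate is this calibration: producing explicit $M_1,M_2$ (e.g.\ from \cite{sedgwick:01} or two-bridge/Seifert-fibered examples with controlled Heegaard genus and a high-genus incompressible boundary component) for which all three of ``$\gen(M_i)$ is what we want,'' ``the relevant boundary surface has the right genus,'' and ``the amalgamation across $F$ has genus exactly $n$'' hold simultaneously, and verifying that the orientation-reversed amalgamation really does carry the opposite transverse orientation on the $F$-level so that the hypothesis ``oriented in opposite ways'' of Lemma \ref{l:LowerBoundTheorem} is met. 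The rest — choosing $\phi$ complicated enough via Theorem \ref{t:DistanceBoundTheorem}, and assembling the stabilization sequence into an irreducible SOG — is routine given the machinery already in place.
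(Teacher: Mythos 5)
Your plan is essentially the paper's: make $F$ a barrier surface by a sufficiently complicated gluing, interpolate between the two orientations of the amalgamation by a SOG, SOG-reduce to get an irreducible SOG, and apply Lemma~\ref{l:LowerBoundTheorem} to the fact that $F$ carries opposite transverse orientations at the two ends. Your eventual calibration --- glue pieces $M_i$ of Heegaard genus $n-1$ along a surface $F$ of genus $n-2$, so the amalgamation has genus $2(n-1)-(n-2)=n$ and the lemma gives a lower bound of $\gen(M_1)+\gen(M_2)=2n-2$, hence at least $n-2$ stabilizations --- is exactly the paper's choice, written with $g=n-2$, where the $H_i$ have genus $g+1$ and $F$ has genus $g$. (Small slip: with one barrier surface and two complementary pieces the RHS of the lemma is $\gen(M_1)+\gen(M_2)+1-2+1=\gen(M_1)+\gen(M_2)$, not $\gen(M_1)+\gen(M_2)-1$; fortunately your final arithmetic quietly uses the correct value.)

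There is one genuine gap in the SOG step. You claim "the first and last GHS are the strongly irreducible --- indeed Heegaard --- splittings $H$ and $\overline H$," but the GHS with single thick level $H$ is strongly irreducible only if the \emph{surface} $H$ is, and $H$ is an amalgamation across $F$, hence weakly reducible. So the SOG you describe is not irreducible, and Lemma~\ref{l:LowerBoundTheorem}, which requires an irreducible SOG, does not apply to it. The paper avoids this by bookending the SOG with the GHSs $H^1$ and $H^*$ whose thick levels are $\{H_1,H_2\}$ and thin level is $\{F\}$; for these GHSs to be strongly irreducible one must take the $H_i$ to be \emph{strongly irreducible} Heegaard surfaces of $M_i$, a hypothesis your construction never imposes (you only ask them to be minimal genus). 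With that hypothesis, SOG reduction leaves $H^1$ and $H^*$ untouched, so the opposite orientations on $F$ at the endpoints persist into the irreducible SOG $\bf K$, and Lemma~\ref{l:LowerBoundTheorem} then applies as you intended.
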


\begin{proof}
Let $M_1$ and $M_2$ be 3-manifolds (not $I$-bundles) that have one boundary component homeomorphic to a genus $g$ surface, $F$ (where $g \ge 2$). For each $i$ assume the manifold $M_i$ has a strongly irreducible Heegaard surface $H_i$ of genus $g+1$. (It is not difficult to construct examples of such manifolds.)

Now glue $M_1$ and $M_2$ along their boundaries by a ``sufficiently complicated" map, so that by Theorem \ref{t:DistanceBoundTheorem} the gluing surface $F$ becomes a $(2g+2)$-barrier surface. Let $M$ be the resulting 3-manifold. A GHS $H^1$ of $M$ is then defined by:
	\begin{enumerate}
		\item $\thick{H^1}=\{H_1, H_2\}$
		\item $\thin{H^1}=\{F\}$
	\end{enumerate}
	
Choose an orientation on $H^1$. Let $H^*$ denote the GHS with the same thick and thin levels, but with opposite orientation. Then $\amlg{H^*}$ is a Heegaard surface in $M$ that is obtained from the Heegaard surface  $\amlg{H^1}$ by reversing its orientation. By Corollary \ref{c:GHSgenus} the genera of these surfaces is \[n=2(g+1)-g=g+2.\] We now claim that these Heegaard surfaces are not equivalent after any less than $g=n-2$ stabilizations. Let $H$ denote the minimal genus common stabilization of these Heegaard surfaces. We must show $\gen(H) \ge (g+2)+g=2g+2$. 

Let ${\bf H}=\{H^i\}_{i=1}^n$ be the SOG where 
	\begin{enumerate}
		\item $H^1$ is as defined above, 
		\item $H^n=H^*$,
		\item for some $1<j<n$, $\thick{H^j}=\{H\}$ and $\thin{H^j}=\emptyset$, and
		\item $H^j$ is the only maximal GHS in $\bf H$. 
	\end{enumerate}

Let ${\bf K}$ be a SOG obtained from $\bf H$ by a maximal sequence of SOG reductions. By Lemma \ref{l:GenusGoesDown}, $\gen({\bf H}) \ge \gen({\bf K})$. When the initial and final GHS of a SOG are strongly irreducible, SOG reduction will leave them unaffected. Hence, since the orientations on $F$ disagree in the initial and final GHS of $\bf H$, this must also be true of $\bf K$. By Lemma \ref{l:LowerBoundTheorem} we then have
\begin{eqnarray*}
\gen({\bf K}) & \ge & \gen(M_1) + \gen(M_2)\\
&=&(g+1)+(g+1)\\
&=&2g+2
\end{eqnarray*}
Hence, $\gen(H) = \gen({\bf H}) \ge \gen({\bf K}) \ge 2g+2$. 
\end{proof}

\begin{thm}
\label{t:TorusBoundaryCounterExamples}
For each $n \ge 5$ there is an orientable 3-manifold whose boundary is a torus, that has two genus $n$ Heegaard surfaces which must be stabilized at least $n-4$ times to become equivalent.
\end{thm}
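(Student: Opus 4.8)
\textbf{Proof proposal for Theorem \ref{t:TorusBoundaryCounterExamples}.}

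The plan is to mimic the proof of Theorem \ref{t:FlipCounterExample}, but instead of producing two Heegaard surfaces of the glued manifold as the two orientations of a single GHS, I will produce them as the amalgamations of two \emph{genuinely different} GHSs whose thick levels differ on one side of the barrier surface. First I would choose building blocks $M_1$ and $M_2$, neither an $I$-bundle, each having one boundary component homeomorphic to a fixed surface $F$ of genus $g \ge 2$, with $\bdy M_2$ a single torus (so that $M$ has torus boundary) — say $M_2$ is a knot exterior whose only incompressible boundary is that torus and which admits the relevant low-genus splittings — and I would arrange that $M_1$ has \emph{two} strongly irreducible Heegaard surfaces $H_1$ and $H_1'$, each of genus $g+1$, that are not isotopic and, crucially, stay non-isotopic (require many stabilizations to become equivalent). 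Concretely one can take $M_1$ itself to be an amalgamation-type manifold, or invoke known examples of manifolds with two distinct strongly irreducible splittings of the same genus; the point is to get a lower bound on the stabilization distance of $H_1$ from $H_1'$ \emph{inside $M_1$}. Let $H_2$ be a strongly irreducible genus $g+1$ Heegaard surface in $M_2$.

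Next I would glue $M_1$ to $M_2$ along $F$ by a sufficiently complicated map $\phi$ so that, by Theorem \ref{t:DistanceBoundTheorem}, $F$ becomes a $(2g+2)$-barrier surface in $M = M_1 \cup_\phi M_2$; this requires choosing $\phi$ with $d(\phi(Q\cap F_-), Q\cap F_+)$ large enough, which is exactly the ``sufficiently complicated'' hypothesis discussed in Section \ref{s:DistanceBoundSection}. Then I define two GHSs of $M$: $H^1$ with $\thick{H^1} = \{H_1, H_2\}$, $\thin{H^1} = \{F\}$; and $H^\dagger$ with $\thick{H^\dagger} = \{H_1', H_2\}$, $\thin{H^\dagger} = \{F\}$. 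By Corollary \ref{c:GHSgenus} both $\amlg{H^1}$ and $\amlg{H^\dagger}$ have genus $n = 2(g+1) - g = g+2$, so the claim ``$n \ge 5$'' corresponds to $g \ge 3$ (the index shift from Theorem \ref{t:FlipCounterExample} accounts for the ``$n-4$'' versus ``$n-2$''). To bound the number of stabilizations needed to make $\amlg{H^1}$ and $\amlg{H^\dagger}$ equivalent from below, let $H$ be their minimal genus common stabilization and build a SOG ${\bf H} = \{H^i\}$ with $H^1$ at one end, $H^\dagger$ at the other, a single maximal GHS $H^j$ with $\thick{H^j} = \{H\}$ and $\thin{H^j} = \emptyset$ in the middle, and everything in between obtained by weak reductions. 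Apply a maximal sequence of SOG reductions to get an irreducible SOG ${\bf K}$ with $\gen({\bf H}) \ge \gen({\bf K})$; since $H^1$ and $H^\dagger$ are strongly irreducible they survive SOG reduction unchanged, so the endpoints of ${\bf K}$ are still $H^1$ and $H^\dagger$.

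The heart of the argument — and the step I expect to be the main obstacle — is the analogue of Lemma \ref{l:LowerBoundTheorem} in this setting: I need to show that because $F$ is a $(2g+2)$-barrier surface, either $\gen({\bf H}) > 2g+2$ outright, or else $F$ is isotopic to a thin level of every GHS in ${\bf K}$ (by Lemma \ref{l:FparallelToThinLevelSOG}), so that cutting every GHS of ${\bf K}$ along $F$ splits ${\bf K}$ into a SOG of $M_1$ from $H_1$ to $H_1'$ and a (constant) SOG of $M_2$. Then $\gen({\bf H}) \ge \gen({\bf K}) \ge$ the stabilization genus of $H_1$ and $H_1'$ in $M_1$ plus $\gen(M_2) - \gen(F) + (\text{correction from Lemma \ref{l:GenusSum}})$, which with the chosen building blocks is at least $(2g+2) + (g+1) - g = 2g+3 = n+1$; hence at least $g-1 = n-4$ stabilizations of a genus $n$ surface are required. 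The subtlety is that, unlike the flip case where reversing orientation automatically forces $F$ to appear \emph{twice} in an intermediate GHS, here I must instead argue that the difference between $H_1$ and $H_1'$ propagates: one should verify that any SOG in $M$ connecting $H^1$ to $H^\dagger$ restricts, after cutting along the (barrier-forced) copies of $F$, to a legitimate SOG in $M_1$ connecting $H_1$ to $H_1'$, so that a lower bound on the stabilization distance in $M_1$ transfers to $M$. Establishing this restriction statement cleanly — handling the bookkeeping of thin levels created and destroyed by weak reductions and the genus accounting of Lemma \ref{l:GenusSum} across the cut — is where the real work lies; everything else is a direct adaptation of the machinery already set up in Sections \ref{s:GHS}–\ref{s:CounterExamples}.
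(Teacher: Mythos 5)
Your proposed construction departs from the paper's in a way that opens a genuine gap. The paper does not use two inequivalent strongly irreducible Heegaard surfaces of the same genus in $M_1$; instead it gives $M_1$ a \emph{second} boundary component, a torus $T$, and takes the two competing thick levels of $M_1$ to be a strongly irreducible $H_1$ of genus $g+1$ and $G_1 = $ ($H_1$ boundary-stabilized along $T$), of genus $g+2$. The whole point of $T$ is the orientation argument: any common stabilization of $\amlg{H^1}$ and $\amlg{H^*}$ forces the orientations on $T$ to agree, and with $T$ fixed the orientations on $F$ must then \emph{disagree} (Figure \ref{f:H1H*Orientations}). That opposite orientation on $F$ is precisely the hypothesis of Lemma \ref{l:LowerBoundTheorem}, which is then applied exactly as in Theorem \ref{t:FlipCounterExample}. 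Your GHSs $H^1$ and $H^\dagger$ have $F$ oriented the \emph{same} way (you only swapped $H_1$ for $H_1'$), so Lemma \ref{l:LowerBoundTheorem} does not apply and you correctly identify that you would need a new lemma.

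That new lemma is where the proposal breaks down. You need two things, neither of which is available. First, a building block $M_1$ carrying two strongly irreducible genus $g+1$ splittings $H_1$, $H_1'$ with a proven lower bound of roughly $g$ on their stabilization distance \emph{inside $M_1$}; but this is exactly the kind of statement the whole section is constructing, so invoking it is essentially circular (and the closed examples of Theorem \ref{t:FlipCounterExample} do not hand you a bounded-$M_1$ version for free). Second, you need the claim that an irreducible SOG in $M$ connecting $H^1$ to $H^\dagger$, once $F$ is known to be a thin level of every GHS, restricts cleanly to a SOG in $M_1$ connecting $H_1$ to $H_1'$. Nothing in the paper proves this; Lemma \ref{l:LowerBoundTheorem} sidesteps it entirely by only extracting genus information at a \emph{single} GHS $H^j$ where $F$ appears twice with opposite orientations (forced by the flip), rather than tracking a full sub-SOG on one side. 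Without the flip there is no reason for $F$ to appear twice, and without a restriction lemma there is no way to import a stabilization lower bound from $M_1$. Finally, your genus accounting is internally inconsistent: you set $n = g+2$ early on, but the displayed bound $2g+3 = n+1$ requires $n = 2g+2$, and the conclusion ``$g-1 = n-4$'' requires $n = g+3$; in the paper $n = g+3$ precisely because $G_1$ has genus $g+2$ (one more than $H_1$), which your symmetric $H_1, H_1'$ setup does not reproduce.
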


\begin{proof}
Let $M_1$ and $M_2$ be 3-manifolds (that are not $I$-bundles) that have one boundary component homeomorphic to a genus $g$ surface, $F$ (where $g \ge 2$). The manifold $M_1$ also has a boundary component $T$ that is a torus. The manifold $M_2$ has no boundary components other than $F$. For each $i$, assume the manifold $M_i$ has a strongly irreducible Heegaard surface $H_i$ of genus $g+1$. The manifold $M_1$ then has a genus $g+2$ Heegaard surface $G_1$, obtained from $H_1$ by boundary stabilizing along $T$. 

Now glue $M_1$ and $M_2$ along their genus $g$ boundary components by a ``sufficiently complicated" map, so that by Theorem \ref{t:DistanceBoundTheorem} the gluing surface $F$ becomes a $(2g+2)$-barrier surface. Let $M$ be the resulting 3-manifold. GHSs $H^1$ and $H^*$  of $M$ are then defined by:
	\begin{enumerate}
		\item $\thick{H^1}=\{H_1, H_2\}$
		\item $\thick{H^*}=\{G_1,H_2\}$
		\item $\thin{H^1}=\thin{H^*}=\{F, T\}$
	\end{enumerate}

Choose orientations on $H^1$ and $H^*$ so that the orientations on $T$ agree. Then both $\amlg{H^1}$ and $\amlg{H^*}$ are Heegaard surfaces in $M$, with $T$ on the same side of each.  Hence, these two Heegaard surfaces have some common stabilization, $H$. Note also that the orientations on $F$ in $H^1$ and $H^*$ necessarily disagree. See Figure \ref{f:H1H*Orientations}. Let $H^{**}$ denote the GHS obtained from $H^*$ by a maximal sequence of weak reductions. As weak reduction cannot kill an incompressible thin level, $F$ is a thin level of $H^{**}$ that is oriented oppositely in $H^{**}$ than in $H^1$.

        \begin{figure}[htbp]
        \psfrag{1}{$H^1$}
        \psfrag{2}{$H^*$}
        \psfrag{F}{$F$}
        \psfrag{H}{$H_1$}
        \psfrag{G}{$G_1$}
        \psfrag{h}{$H_2$}
        \psfrag{T}{$T$}
        \vspace{0 in}
        \begin{center}
       \includegraphics[width=4.5 in]{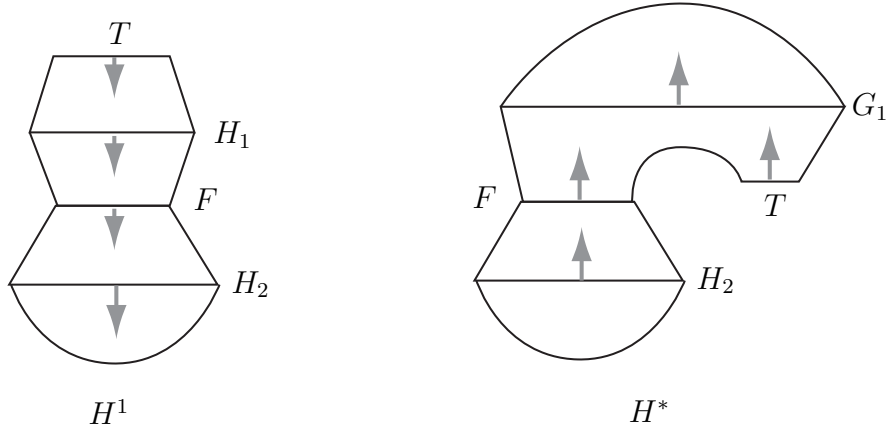}
       \caption{If the orientations on $T$ in $H^1$ and $H^*$ agree, then the orientations on $F$ disagree.}
        \label{f:H1H*Orientations}
        \end{center}
        \end{figure}

By Corollary \ref{c:GHSgenus} the genus of $\amlg{H^1}$  is \[2(g+1)-g=g+2.\] The genus of $\amlg{H^*}$ is one higher, $g+3$. Let this number be $n$. We now claim that we must stabilize $\amlg{H^*}$ at least $g-1=n-4$ times to obtain a stabilization of $\amlg{H^1}$. In other words, we claim \[\gen(H) \ge (g+3)+(g-1)=2g+2.\] 

Let ${\bf H}=\{H^i\}_{i=1}^m$ be the SOG where 
	\begin{enumerate}
		\item $H^1$ is as defined above, 
		\item $H^n=H^*$,
		\item $H^{n+m}=H^{**}$,
		\item for some $1<j<n$, $\thick{H^j}=\{H\}$ and $\thin{H^j}=T$, and
		\item $H^j$ is the only maximal GHS in $\bf H$. 
	\end{enumerate}

Let ${\bf K}$ be a SOG obtained from $\bf H$ by a maximal sequence of SOG reductions. By Lemma \ref{l:GenusGoesDown}, $\gen({\bf H}) \ge \gen({\bf K})$. When the initial and final GHS of a SOG are strongly irreducible, SOG reduction will leave them unaffected.  Since the orientations on $F$ disagree in the initial and final GHS of $\bf H$, this must also be true of $\bf K$. Hence, by Lemma \ref{l:LowerBoundTheorem}, 
\begin{eqnarray*}
\gen({\bf K}) & \ge & \gen(M_1) + \gen(M_2)\\
&=&(g+1)+(g+1)\\
&=&2g+2
\end{eqnarray*}
Hence, $\gen(H) = \gen({\bf H}) \ge \gen({\bf K}) \ge 2g+2$. 
\end{proof}

\begin{thm}
\label{t:ClosedCounterExamples}
For each $n \ge 8$ there is a closed, orientable 3-manifold that has a pair of genus $n$ Heegaard surfaces which must be stabilized at least $\frac{1}{2}n -3$ times to become equivalent (regardless of their orientations).
\end{thm}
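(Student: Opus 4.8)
\textbf{Proof proposal for Theorem \ref{t:ClosedCounterExamples}.}

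The plan is to mimic the construction of Theorem \ref{t:TorusBoundaryCounterExamples}, but to close up the manifold so that no genuine $\partial$-stabilization is available, and to do it in a way that \emph{doubles} the number of barrier surfaces so that the lower bound on the genus of a common stabilization grows like twice the genus of the component pieces, while the genus of the two splittings being compared grows only linearly. Concretely, I would take three building-block manifolds $M_0, M_1, M_2$ (none an $I$-bundle), where $M_1$ has two boundary components, each a genus $g$ surface $F$, and $M_0, M_2$ each have a single genus $g$ boundary component. Give each $M_k$ a strongly irreducible Heegaard surface $H_k$ of genus $g+1$, and let $G_0$ (resp.\ $G_2$) be the genus $g+2$ surface obtained from $H_0$ (resp.\ $H_2$) by boundary-stabilizing along its boundary before gluing. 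Glue $M_0$ to one copy of $F\subset\partial M_1$ and $M_2$ to the other copy, in each case by a sufficiently complicated map, so that by Theorem \ref{t:DistanceBoundTheorem} both gluing surfaces $F_1$ and $F_2$ become $(2g+2)$-barrier surfaces in the resulting closed manifold $M$. (I would need $n\ge 8$ precisely so that $\tfrac12 n - 3 = g-1 \ge 1$, i.e.\ $g\ge 2$, matches the range in which the barrier-surface hypothesis of Theorem \ref{t:DistanceBoundTheorem} applies with the stated constants.)

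Next I would build two GHSs of $M$. The first, $H^1$, has $\thick{H^1}=\{H_0,H_1,H_2\}$ and $\thin{H^1}=\{F_1,F_2\}$; the second, $H^\star$, has $\thick{H^\star}=\{G_0,H_1,G_2\}$ and the same thin levels. Using Corollary \ref{c:GHSgenus}, $\gen(\amlg{H^1}) = (g+1)+(g+1)+(g+1) - g - g + 2 - 3 + 1 = g+3$, while the two boundary-stabilizations raise $H^\star$ to genus $g+5 =: n$. I would then choose orientations so that $H^1$ and (a maximal weak reduction $H^{\star\star}$ of) $H^\star$ disagree on the orientation of one of the barrier surfaces — say $F_1$ — exactly as in the proof of Theorem \ref{t:TorusBoundaryCounterExamples}; here one uses that boundary-stabilization along the \emph{other} side of $M_1$ (via $G_0$ vs.\ $G_2$) can be arranged to flip the induced orientation on $F_1$. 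As before, any common stabilization $H$ of $\amlg{H^1}$ and $\amlg{H^\star}$ fits into an irreducible SOG $\bf H$ with a single maximal GHS whose thick level is $H$; passing to $\bf K$ by a maximal sequence of SOG reductions preserves the orientation disagreement on $F_1$ in the first and last GHS because SOG reduction leaves strongly irreducible endpoints fixed.

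The final step is to apply Lemma \ref{l:LowerBoundTheorem} with the collection of barrier surfaces $\{F_1,F_2\}$ (so $n_{\text{barriers}}=2$) and the three pieces $\{M_0,M_1,M_2\}$ (so $m=3$), distinguishing $F_1$ as the one whose orientation disagrees. This yields
\[
\gen({\bf K}) \ \ge\ \gen(M_0)+\gen(M_1)+\gen(M_2) - \gen(F_2) + 2 - 3 + 1 \ =\ 3(g+1) - g \ =\ 2g+3,
\]
so $\gen(H)=\gen({\bf H})\ge\gen({\bf K})\ge 2g+3$, whereas $\gen(\amlg{H^\star})=g+5$; hence $H$ is obtained from $\amlg{H^\star}$ by at least $(2g+3)-(g+5)=g-2$ stabilizations, and since $n=g+5$ this is $n-7$. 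To get the advertised bound $\tfrac12 n - 3$ I would instead optimize the bookkeeping — e.g.\ take more component pieces $M_0,\dots,M_r$ in a chain so that the left-hand side of Lemma \ref{l:LowerBoundTheorem} grows like $(r+1)g$ minus $rg$ worth of thin-level genus, i.e.\ like $g + (\text{const})$, while $\gen(\amlg{H^\star})$ grows like $g+O(r)$; balancing the two against $n$ forces the $\tfrac12$. \textbf{The main obstacle} I anticipate is precisely this last optimization together with verifying that the orientation disagreement can be propagated through a chain of barrier surfaces (not just one), since Lemma \ref{l:LowerBoundTheorem} only exploits a single flipped surface $F_1$; I would either iterate the flip argument along the chain or, more likely, choose the chain length and the distribution of boundary-stabilizations so that exactly one barrier surface flips while the arithmetic of Corollary \ref{c:GHSgenus} and Lemma \ref{l:LowerBoundTheorem} still delivers a gap of $\tfrac12 n - 3$.
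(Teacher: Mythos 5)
Your construction is genuinely different from the paper's, and unfortunately the difference is fatal: the chain $M_0\,\text{--}\,F_1\,\text{--}\,M_1\,\text{--}\,F_2\,\text{--}\,M_2$ with boundary\mbox{-}stabilizations on the two end pieces does not force the orientation disagreement that Lemma~\ref{l:LowerBoundTheorem} needs. If you orient $H^1$ so the partial order runs $H_0 < F_1 < H_1 < F_2 < H_2$, you are free to orient $H^\star$ (or, after maximal weak reduction, $H^{\star\star}$) in the corresponding way — with $M_0$ at the ``bottom'' — and then the transverse orientations on \emph{both} $F_1$ and $F_2$ agree with $H^1$. A linear chain with symmetric end modifications has no left/right asymmetry to exploit, so the hypothesis of Lemma~\ref{l:LowerBoundTheorem} that some barrier surface is ``oriented in opposite ways'' in the two endpoints simply isn't forced; and for the orientation where it does fail, you have no lower bound on the genus of the common stabilization.

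There is a second, independent obstruction. Because $G_0$ (resp.\ $G_2$) is obtained by boundary\mbox{-}stabilizing $H_0$ along $F_1$ itself (resp.\ $H_2$ along $F_2$), the maximal weak reduction $H^{\star\star}$ of $H^\star$ acquires an \emph{extra} thin level $F_1'$ (parallel to $F_1$ inside a collar) with a Type~II splitting between $F_1'$ and $F_1$, and likewise an extra $F_2'$. Thus $F_1$ is isotopic to two thin levels of $H^{\star\star}$, violating the uniqueness hypothesis of Lemma~\ref{l:LowerBoundTheorem}. (Worse, those two parallel copies are oriented oppositely \emph{within $H^{\star\star}$ itself}, so the ``product region with a high\mbox{-}genus splitting'' that the lemma locates may simply sit at the known endpoint, giving only the trivial bound $\gen(\amlg{H^\star})$ rather than a bound on the intermediate maximal GHS.) The paper sidesteps both problems at once by introducing a fourth piece and two auxiliary \emph{torus} gluings $T_1$, $T_2$: in $H^1$ the boundary\mbox{-}stabilization is along $T_1\subset\bdy M_1$, in $H^\star$ it is along $T_2\subset\bdy M_2$, so (i) the extra thin levels created by un\mbox{-}amalgamating are tori, distinct from the genus\mbox{-}$g$ barriers $F_1,F_2$, preserving uniqueness; and (ii) the asymmetry between ``stabilize on the $M_1$ side'' and ``stabilize on the $M_2$ side'' forces an orientation flip on $F_1$ or $F_2$, which is exactly the dichotomy illustrated in Figure~\ref{f:F1F2Orientations}. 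Your closing remark that you could ``choose the chain length and the distribution of boundary\mbox{-}stabilizations so that exactly one barrier surface flips'' won't work along a chain: both difficulties persist no matter how long the chain is, because they come from stabilizing along the genus\mbox{-}$g$ barriers themselves rather than along auxiliary low\mbox{-}genus boundary components off to the side.

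A smaller issue: your two splittings have genera $g+3$ and $g+5$, whereas the theorem asks for a pair of genus\mbox{-}$n$ splittings (the paper's $\amlg{H^1}$ and $\amlg{H^\star}$ both have genus $2g+4$). That mismatch is repairable by stabilizing the smaller one, but it signals that the balance between the two GHSs is off — in the paper exactly one thick level on each side is boundary\mbox{-}stabilized, keeping the genera equal.
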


\begin{proof}
Let $M_1$, $M_2$, $M_3$, and $M_4$ be 3-manifolds, none of which is an $I$-bundle, as follows. Each of these manifolds has one boundary component homeomorphic to a genus $g$ surface, $F$ (where $g \ge 2$), and a Heegaard surface $H_i$ of genus $g+1$ that separates $F$ from any other boundary component. The manifolds $M_1$ and $M_2$ have a second boundary component, which is a torus. The manifold $M_3$ has two toroidal boundary components. The manifold $M_4$ has no boundary components other than $F$. For $i=1$ and $2$ the manifolds $M_i$  also have a second Heegaard surface, $G_i$, of genus $g+2$ obtained from $H_i$ by boundary stabilizing along the torus boundary component. 

Now glue all four manifolds together as in Figure \ref{f:H1GHS} by ``sufficiently complicated" maps so that by Theorem \ref{t:DistanceBoundTheorem} both copies of $F$, and both gluing tori, become $(3g+3)$-barrier surfaces. Let $M$ be the resulting 3-manifold. For $i=1$ and 2 let $T_i$ denote the torus between $M_i$ and $M_3$. Let $F_1$ denote the copy of $F$ between $M_1$ and $M_2$, and $F_2$ the copy of $F$ between $M_3$ and $M_4$. 

        \begin{figure}[htbp]
        \psfrag{a}{$M_1$}
        \psfrag{b}{$M_2$}
        \psfrag{c}{$M_3$}
        \psfrag{d}{$M_4$}
        \psfrag{G}{$G_1$}
        \psfrag{g}{$G_2$}
        \psfrag{t}{$T_1$}
        \psfrag{t}{$T_1$}
        \psfrag{T}{$T_2$}
        \psfrag{f}{$F_1$}
        \psfrag{F}{$F_2$}
        \psfrag{1}{$H_1$}
        \psfrag{2}{$H_2$}
        \psfrag{3}{$H_3$}
        \psfrag{4}{$H_4$}
        \vspace{0 in}
        \begin{center}
       \includegraphics[width=4.5 in]{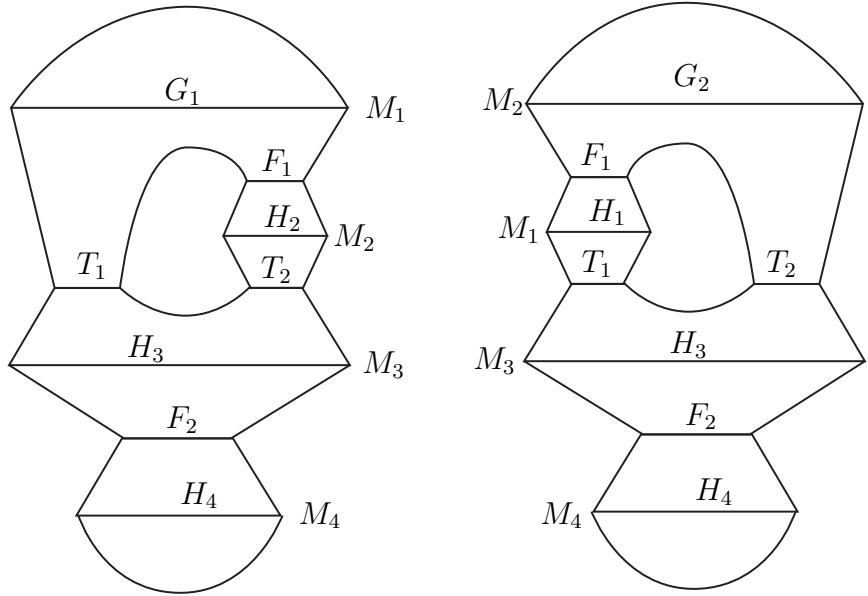}
       \caption{The GHSs, $H^1$ and $H^*$.}
        \label{f:H1GHS}
        \end{center}
        \end{figure}

We now define two GHSs $H^1$ and $H^*$ of $M$ (See Figure \ref{f:H1GHS}):

\begin{enumerate}
	\item $\thick{H^1}=\{G_1, H_2, H_3, H_4\}$
	\item $\thick{H^*}=\{H_1, G_2, H_3, H_4\}$. 
	\item $\thin{H^1}=\thin{H^*}=\{F_1, F_2, T_1, T_2\}$
\end{enumerate}

By definition, $\amlg{H^1}$ and $\amlg{H^*}$ are both Heegaard surfaces in $M$. By Corollary \ref{c:GHSgenus} the genera of these surfaces is \[n=3(g+1)+(g+2)-2g-2+1=2g+4.\] We claim that no matter what orientation is chosen for these GHSs, they are not equivalent after any less than $g-1=\frac{1}{2}n-3$ stabilizations. Let $H$ denote the minimal genus common stabilization of these Heegaard surfaces. We must show $\gen(H) \ge (2g+4)+(g-1)=3g+3$. 

Orient $H^1$ and $H^*$. Note that if these orientations agree on $F_1$ then they disagree on $F_2$. See Figure \ref{f:F1F2Orientations}. Hence, any SOG that interpolates between $H^1$ and $H^*$ must reverse the orientation of either $F_1$ or $F_2$.

        \begin{figure}[htbp]
        \psfrag{a}{(a)}
        \psfrag{b}{(b)}
        \psfrag{1}{$H^1$}
        \psfrag{f}{$F_1$}
        \psfrag{F}{$F_2$}
        \vspace{0 in}
        \begin{center}
       \includegraphics[width=4.5 in]{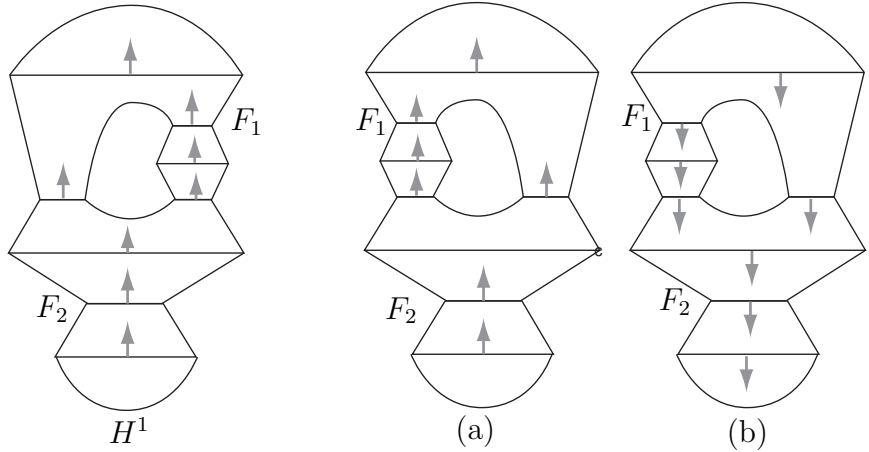}
       \caption{An orientation on $H^1$ and two possible orientations on $H^*$. In $H^1$ the manifold $M_1$ is above $F_1$. In Case (a) the manifold $M_1$ is below $F_1$. Hence, the orientations on $F_1$ in $H^1$ and $H^*$ disagree. In Case (b) the orientations on $F_2$ disagree.}
        \label{f:F1F2Orientations}
        \end{center}
        \end{figure}

Let ${\bf H}=\{H^i\}_{i=1}^n$ be the SOG where 
	\begin{enumerate}
		\item $H^1$ is as defined above, 
		\item $H^n=H^*$,
		\item for some $1<j<n$, $\thick{H^j}=\{H\}$ and $\thin{H^j}=\emptyset$, and
		\item $H^j$ is the only maximal GHS in $\bf H$. 
	\end{enumerate}

Let ${\bf K}=\{K^i\}$ be a SOG obtained from $\{H^i\}$ by a maximal sequence of SOG reductions. By Lemma \ref{l:GenusGoesDown}, $\gen({\bf H}) \ge \gen({\bf K})$. By Lemma \ref{l:LowerBoundTheorem}, 
\begin{eqnarray*}
\gen({\bf K}) & \ge & \sum \limits_{i=1} ^4 \gen(M_i) - \gen(T_1)-\gen(T_2)-\gen(F)+1\\
&=&4(g+1)-2-g+1\\
&=&3g+3
\end{eqnarray*}
Hence, $\gen(H) = \gen({\bf H}) \ge \gen({\bf K}) \ge 3g+3$. 
\end{proof}

\section{Amalgamations of unstabilized Heegaard splittings}
\label{s:Amalgamations}

\begin{thm}
\label{t:HigherGenusGordon}
Let $M_1$ and $M_2$ be compact, orientable, irreducible 3-manifolds with incompressible boundary, neither of which is an $I$-bundle. Let $M$ denote the manifold obtained by gluing some component $F$ of $\bdy M_1$ to some component of $\bdy M_2$ by some homeomorphism $\phi$. Let $H_i$ be an unstabilized, boundary-unstabilized Heegaard surface in $M_i$. If $\phi$ is sufficiently complicated then the amalgamation of $H_1$ and $H_2$ in $M$ is unstabilized.
\end{thm}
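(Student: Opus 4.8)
The plan is to argue by contradiction: suppose $\phi$ is sufficiently complicated (so that, by Theorem~\ref{t:DistanceBoundTheorem}, $F$ is a $g$-barrier surface for $g$ equal to the genus of $\amlg{H_1,H_2}$), and suppose the amalgamation $H=\amlg{H_1,H_2}$ is stabilized. A stabilized Heegaard surface destabilizes, so there is a sequence of Heegaard surfaces of $M$, related by (de)stabilizations and isotopies, starting at $H$ and ending at a splitting of strictly smaller genus. I would repackage this sequence as an irreducible SOG $\bf H$ of $M$ whose first GHS $H^1$ has $\thick{H^1}=\{H_1,H_2\}$ and $\thin{H^1}=\{F\}$, and whose genus equals $\gen(H)$ (reducing to an irreducible SOG only lowers genus, by Lemma~\ref{l:GenusGoesDown}, and leaves the strongly irreducible endpoints alone). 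The first endpoint is the amalgamation GHS built from $H_1,H_2$ across $F$; the last endpoint should be taken to be the GHS realizing the smaller-genus splitting.

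Next I would invoke the barrier machinery of Section~\ref{s:LastDefSection}. Since $\bf H$ is a genus $g$ irreducible SOG and $F$ is a $g$-barrier surface, Lemma~\ref{l:FparallelToThinLevelSOG} forces $F$ to be isotopic to a thin level of \emph{every} GHS in $\bf H$ — in particular of every maximal GHS, which is critical by Lemma~\ref{l:maximalGHS}. Because $F$ persists as a thin level throughout, cutting each GHS $H^i$ of $\bf H$ along $F$ produces a pair of GHSs $H^i(M_1)$ and $H^i(M_2)$ of $M_1$ and $M_2$, and the weak reductions/destabilizations relating consecutive $H^i$ either happen entirely inside $M_1$ or entirely inside $M_2$ (a weak reducing pair for a thick level lies on one side of the thin level $F$). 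So the SOG $\bf H$ restricts to SOGs ${\bf H}(M_1)$ and ${\bf H}(M_2)$ of $M_1$ and $M_2$, with $\gen({\bf H})=\gen({\bf H}(M_1))+\gen({\bf H}(M_2))-\gen(F)+1$ at each stage by Lemma~\ref{l:GenusSum}. Since the first GHS of ${\bf H}(M_i)$ amalgamates to $H_i$, and $H_i$ is unstabilized, no destabilization can ever occur on the $M_i$ side without having first increased genus there; more carefully, each ${\bf H}(M_i)$ is an SOG whose minimal genus equals $\gen(\amlg{H_i})=\gen(H_i)$, so $\gen(H)=\gen(H_1)+\gen(H_2)-\gen(F)+1=\gen(\amlg{H_1,H_2})$, contradicting that the last GHS had strictly smaller genus.

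The point where I expect the real work to lie — and where the boundary-unstabilized hypothesis must enter — is in justifying that the restricted SOGs ${\bf H}(M_i)$ genuinely have minimal genus $\gen(H_i)$, i.e.\ that no destabilization on the $M_i$ side lowers genus below $\gen(H_i)$. A priori a destabilization of $H^i(M_i)$ could create a new thin level parallel to $F$ inside $M_i$ (a Type II phenomenon), which amounts to a \emph{boundary}-destabilization of the splitting of $M_i$ rather than an ordinary destabilization; this is exactly the loophole the boundary-stabilization examples in the introduction exploit. So the crux is a lemma saying: if $H_i$ is both unstabilized and boundary-unstabilized along $F$, then every GHS of $M_i$ appearing in ${\bf H}(M_i)$ has genus at least $\gen(H_i)$, with the thin-level-parallel-to-$F$ case handled by the boundary-unstabilized hypothesis via the $F\times I$ product structure (Type II splittings of $F\times I$ have genus $\ge 2\gen(F)$, by \cite{st:93}, as in Lemma~\ref{l:LowerBoundTheorem}). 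Once that is in place, the Euler-characteristic bookkeeping of Lemma~\ref{l:GenusSum}/Corollary~\ref{c:GHSgenus} closes the argument.
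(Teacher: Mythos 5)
Your outline has the right overall shape, and you correctly identify where the boundary-unstabilized hypothesis must enter, but the ``lemma'' you flag as the crux is precisely the hard part of the theorem, and you leave it unproved. Three specific ingredients are missing. First, an irreducible SOG has strongly irreducible endpoints, so it cannot begin at the GHS whose thick levels are $H_1,H_2$ and whose thin level is $F$ -- that GHS may well admit weak reductions. The paper's SOG ${\bf \Gamma}$ begins at the result of maximally weak-reducing that GHS, and SOG reduction leaves those strongly irreducible endpoints unchanged. Second, your claim that the SOG ``restricts'' to SOGs of $M_1$ and $M_2$ breaks down the moment some GHS has two thin levels isotopic to $F$, and a non-destabilizing weak reduction can produce exactly that; the paper introduces $m$, the largest index for which $F$ is isotopic to a \emph{unique} thin level of $\Lambda^i$ for all $i\le m$, to control where this first happens.

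Third, and most importantly, you need Lemma~8.12 of \cite{gordon}: since ${\bf\Gamma}$ was built with no stabilizations, the first destabilization in the reduced SOG ${\bf\Lambda}$ precedes the first stabilization. Calling its index $p$, the paper splits into cases. If $p\le m$, then for all $i\le p$ the surface $F$ cuts $\Lambda^i$ into two GHSs whose amalgamations are unchanged under non-destabilizing weak reduction (Lemma~\ref{l:AmalgGenus}), so at step $p$ they are still $H_1$ and $H_2$; both are unstabilized, so a destabilization at step $p$ is impossible. If $p>m$, the weak reduction $\Lambda^m\to\Lambda^{m+1}$ creates a second thin level parallel to $F$, and the product $P\cong F\times I$ between the two copies carries a Heegaard surface $H_P=\amlg{\Lambda^{m+1}(P)}$ that must be unstabilized (otherwise $H_1^{m+1}=H_1$ would be stabilized); by \cite{st:93} the only possibility is a Type~II splitting, which makes $H_1$ boundary-stabilized, a contradiction. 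This is the content you gestured at but did not supply; it is genuinely subtle, since the argument is about the order of moves in ${\bf\Lambda}$, not a standalone genus bound for some restricted SOG. One small slip as well: with $F$ connected and separating, Lemma~\ref{l:GenusSum} gives $\gen(H)=\gen(H(M_1))+\gen(H(M_2))-\gen(F)$, with no $+1$.
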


Here the term ``sufficiently complicated" means that the distance of $\phi$ is high enough so that by Theorem \ref{t:DistanceBoundTheorem} the surface $F$ becomes a $g$-barrier surface, where $g=\mbox{genus}(H_1)+\mbox{genus}(H_2)-\mbox{genus}(F)$.

\begin{proof}
Let ${\bf \Gamma}$ be the SOG depicted in Figure \ref{f:InitialSOGgordon}. The second GHS pictured is the one whose thick levels are $H_1$ and $H_2$. The first GHS in the figure is obtained from this one by a maximal sequence of weak reductions. The third GHS is the one whose only thick level is the amalgamation $H$ of $H_1$ and $H_2$. The next GHS pictured is obtained from $H$ by some number of destabilizations. Finally, the last GHS is obtained from the second to last by a maximal sequence of weak reductions. Note that by construction, $\mbox{genus}({\bf \Gamma})=\mbox{genus}(H)=g$. (The second equality follows from Corollary \ref{c:GHSgenus}.)

        \begin{figure}[htbp]
        \psfrag{X}{$H_2$}
        \psfrag{H}{$H_1$}
        \psfrag{F}{$F$}
        \psfrag{1}{$H$}
        \psfrag{2}{$G$}
        \vspace{0 in}
        \begin{center}
       \includegraphics[width=3.5 in]{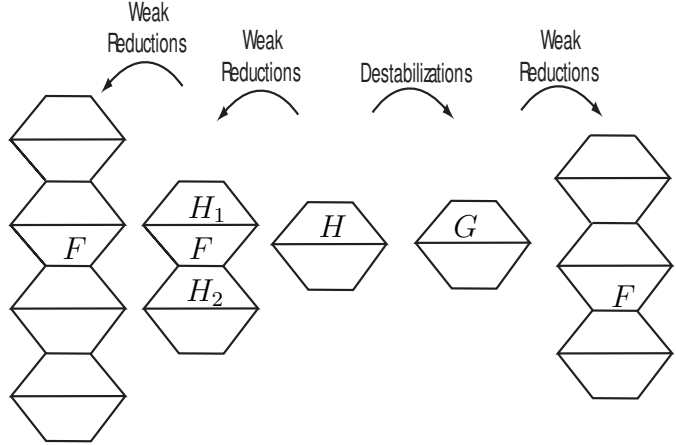}
       \caption{The initial SOG, $\bf \Gamma$.}
        \label{f:InitialSOGgordon}
        \end{center}
        \end{figure}

Now let ${\bf \Lambda}=\{\Lambda ^i\}_{i=1}^n$ be the SOG obtained from ${\bf \Gamma}$ by a maximal sequence of SOG reductions. When the first and last GHS of a SOG admit no weak reductions, then they remain unaffected by SOG reduction. Hence, $\Lambda^1$ is the first element of ${\bf \Gamma}$ and $\Lambda ^n$ is the last element of ${\bf \Gamma}$.

Since $F$ is a $g$-barrier surface, it is isotopic to a thin level of every GHS of $\bf \Lambda$. Let $m$ denote the largest number such that $F$ is isotopic to a {\it unique} thin level $F_i$ of $\Lambda ^i$, for all $i \le m$. The surface $F_i$ then divides $M$ into manifolds $M_1^i$ and $M_2^i$, homeomorphic to $M_1 $ and $M_2$, for each $i \le m$. 

Now note that there are no stabilizations in the original SOG $\bf \Gamma$. It thus follows from Lemma 8.12 of \cite{gordon} that the first destabilization in $\bf \Lambda$ happens before the first stabilization. Furthermore, as the genus of $\Lambda ^n$ is less than the genus of $\Lambda ^1$, there is at least one destabilization in $\bf \Lambda$. Let $p$ denote the smallest value for which $\Lambda^{p+1}$ is obtained from $\Lambda ^p$ by a destabilization. Then for all $i\le p$, either $\Lambda^i$ or $\Lambda ^{i-1}$ is obtained from the other by a weak reduction that is not a destabilization. 

Suppose first $p \le m$. Then for all $i \le p$ the manifold $M_1^i$ is defined (because $i \le m$), and either $\Lambda^i(M_1^i)=\Lambda^{i-1}(M_1^{i-1})$ or one of $\Lambda^i(M_1^i)$ and $\Lambda^{i-1}(M_1^{i-1})$ is obtained from the other by a weak reduction that is not a destabilization (because $i \le p$). 
%Say why this is true!! Non-trivial: I am saying something here about lack of boundary stabilizations/destabilizations in $M_1$.
It follows from Lemma \ref{l:AmalgGenus} that $H_1^i=\amlg{\Lambda^i(M_1^i)}$ is the same for all $i\le p$. But $H_1^1=H_1$, so $H_1^p=H_1$. By identical reasoning $H_2^p=\amlg{\Lambda^p(M_2^p)}=H_2$. Since $H_1$ and $H_2$ are unstabilized, $H_1^p$ and $H_2^p$ must be unstabilized as well. Hence, neither $H_1^{p+1}$ nor $H_2^{p+1}$  can be obtained from $H_1^p$ or $H_2^p$ by destabilization, a contradiction. 

We thus conclude $p > m$, and thus $H_1^m=H_1$ and $H_2^m=H_2$. In particular, it follows that $m$ is strictly less than $n$. That is, there exists a GHS $\Lambda ^{m+1}$ which has two thin levels isotopic to $F$. 

Since $\Lambda ^{m+1}$ has a thin level that is not a thin level of $\Lambda^m$, it must be obtained from $\Lambda^m$ by a weak reduction. It follows that there is some thin level $F_{m+1}$ of $\Lambda ^{m+1}$ that is identical to $F_m$. The other thin level of $\Lambda ^{m+1}$ that is isotopic to $F$ we call $F_{m+1}'$. The surface $F_{m+1}'$ either lies in $M_1^m$ or $M_2^m$. Assume the former. Let $M^{m+1}_1$ denote the side of $M$ cut along $F_{m+1}$ homeomorphic to $M_1$. It follows that $\Lambda^{m+1}(M_1^{m+1})$ is obtained from $\Lambda^m(M^m_1)$ by a weak reduction that is not a destabilization. Thus, by  Lemma \ref{l:AmalgGenus}, \[H^{m+1}_1=\amlg{\Lambda^{m+1}(M^{m+1}_1)}=  \amlg{\Lambda^m(M^m_1)}=H_1^m=H_1.\] 

The surfaces $F_{m+1}$ and $F_{m+1}'$ cobound a product region $P$ of $M$. A GHS of $P$ is given by $\Lambda^{m+1}(P)$, and thus $H_P=\amlg{\Lambda^{m+1}(P)}$ is a Heegaard surface in a product. If this Heegaard surface is stabilized, then $H^{m+1}_1$ would be stabilized. But since $H^{m+1}_1=H_1$, and $H_1$ is unstabilized, this is not the case. 

We conclude $H_P$ is an unstabilized Heegaard surface in $P$. By \cite{st:93} such a Heegaard surface admits no weak reductions, and thus  $H_P$ must be the unique thick level of $\Lambda^{m+1}(P)$. From \cite{st:93} this Heegaard surface is either  a copy of $F$, or two copies of $F$ connected by a single unknotted tube. In the former case we have a contradiction, as the thick level of  $\Lambda^{m+1}(P)$ would be parallel to the two thin levels $F_{m+1}$ and $F_{m+1}'$, and would thus have been removed during weak reduction. In the latter case $H^{m+1}_1$ is boundary-stabilized. As this Heegard surface is $H_1$, which is not boundary-stabilzed, we again have a contradiction. 
\end{proof}

An example of a 3-manifold that has a weakly reducible, yet unstabilized Heegaard surface which is not a minimal genus Heegaard surface has been elusive. In the next corollary we use Theorem \ref{t:HigherGenusGordon} to construct manifolds that have arbitrarily many such surfaces. 

\begin{cor}
\label{c:Moriah}
There exist manifolds that contain arbitrarily many non-minimal genus, unstabilized Heegaard surfaces which are not strongly irreducible.
\end{cor}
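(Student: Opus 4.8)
The plan is to realize the desired surfaces as amalgamations, using two standard features of amalgamations: the amalgamation of a generalized Heegaard splitting with a thin level in the interior of the manifold is automatically weakly reducible, so that ``not strongly irreducible'' comes for free; and, by Theorem~\ref{t:HigherGenusGordon}, the amalgamation of unstabilized, boundary-unstabilized Heegaard surfaces across a sufficiently complicated gluing is unstabilized. Thus the only genuinely new points are (i) arranging that these amalgamations are not of minimal genus in $M$, and (ii) producing many pairwise non-isotopic such surfaces in a single manifold. Concretely, fix a closed surface $F$ of genus $g\ge 2$ and choose compact, orientable, irreducible $3$-manifolds $M_1, M_2$, neither an $I$-bundle, each with a boundary component homeomorphic to $F$, so that $M_1$ has an unstabilized, boundary-unstabilized Heegaard surface $H_1$, and $M_2$ has $N$ pairwise non-isotopic unstabilized, boundary-unstabilized Heegaard surfaces $H_2^1, \dots, H_2^N$ of a common genus, with the extra property that at least one of $H_1$ or the $H_2^k$ fails to be of minimal Heegaard genus in its own piece. (Seifert fibered spaces with several exceptional fibers, equipped with their various vertical splittings, are the natural candidates for $M_2$; compare the use of vertical splittings via \cite{schultens:96} earlier.) Now glue $M_1$ to $M_2$ along $F$ by a homeomorphism $\phi$ that is sufficiently complicated in the sense of Theorem~\ref{t:DistanceBoundTheorem}, form $M = M_1 \cup_\phi M_2$, and let $S_k$ be the amalgamation of $H_1$ and $H_2^k$ in $M$.

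I would then verify four things about each $S_k$. First, $S_k$ is a Heegaard surface of $M$, directly from the amalgamation construction (Definition~\ref{d:amalgam}). Second, $S_k$ is unstabilized: all of the $S_k$ have the same genus $g' = \gen(H_1) + \gen(H_2^k) - \gen(F)$, so as soon as $\phi$ is taken complicated enough that $F$ is a $g'$-barrier surface, Theorem~\ref{t:HigherGenusGordon} applies to each $k$. Third, $S_k$ is not strongly irreducible: since $F$ appears as a thin level in the interior of $M$ in the defining generalized Heegaard splitting, there are disjoint compressions for $S_k$ on the two sides of $F$ — meridians of the handles attached near $F$ on either side in the amalgamation construction — so $S_k$ is weakly reducible, which by definition precludes strong irreducibility. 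Fourth, $S_k$ is not of minimal genus in $M$: by Corollary~\ref{c:GHSgenus} the genus of the amalgamation of Heegaard surfaces $A \subset M_1$ and $B \subset M_2$ across $F$ is $\gen(A) + \gen(B) - \gen(F)$, which is strictly increasing in $\gen(A) + \gen(B)$; replacing whichever of $H_1, H_2^k$ is non-minimal by a minimal genus Heegaard surface of that same piece therefore yields a Heegaard surface of $M$ of strictly smaller genus than $S_k$.

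Finally, the $S_k$ must be pairwise non-isotopic. Here I would apply Theorem~\ref{t:HighGenusGordonIsotopy}: each $S_k$ is, by construction, an expression of a low genus Heegaard surface of $M$ as an amalgamation of unstabilized, boundary-unstabilized Heegaard surfaces of $M_1$ and $M_2$, so by the uniqueness asserted there, $S_k$ isotopic to $S_l$ would force $H_2^k$ isotopic to $H_2^l$ in $M_2$; as the $H_2^k$ were chosen pairwise non-isotopic, so are the $S_k$. (One can also bypass Theorem~\ref{t:HighGenusGordonIsotopy} by instead taking the $H_2^k$ of pairwise distinct genera in $M_2$, so that the $S_k$ already have pairwise distinct genera by Corollary~\ref{c:GHSgenus}; the rest of the argument is unchanged.) Since $N$ is arbitrary, this proves the corollary.

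The hard part is not this Heegaard-theoretic bookkeeping but the construction of the component pieces: one must exhibit a manifold $M_2$ with incompressible boundary containing $F$, irreducible, not an $I$-bundle, that carries many pairwise non-isotopic unstabilized, boundary-unstabilized Heegaard surfaces of a fixed genus — together with arranging that some splitting occurring in the picture is non-minimal in its piece so that the amalgamations end up non-minimal in $M$. Seifert fibered spaces are the natural testing ground, but one has to check that the relevant vertical splittings really are boundary-unstabilized and remain non-isotopic after the complicated gluing. If one prefers to generate the surfaces by iterating, via a chain $M_1 \cup \dots \cup M_{N+1}$ with a binary choice of splitting at each node and repeated applications of Theorem~\ref{t:HigherGenusGordon}, then the additional point to check — exactly as in the closing paragraphs of the proof of Theorem~\ref{t:HigherGenusGordon} — is that each partial amalgamation stays boundary-unstabilized along the next gluing surface, so that the hypotheses of the theorem continue to hold at every stage.
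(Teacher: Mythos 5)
Your high-level strategy coincides with the paper's: glue two pieces by a sufficiently complicated map, amalgamate unstabilized, boundary-unstabilized Heegaard surfaces, invoke Theorem~\ref{t:HigherGenusGordon} to deduce the amalgamations are unstabilized, and observe that amalgamations are automatically weakly reducible. Where you diverge --- and where a genuine gap remains --- is in producing the component manifolds. Your plan hinges on exhibiting a piece $M_2$ carrying many pairwise non-isotopic unstabilized, boundary-unstabilized Heegaard surfaces (plus a designated non-minimal splitting somewhere), and you rightly flag this as ``the hard part,'' gesturing at vertical splittings of Seifert fibered spaces; but you never verify that those are boundary-unstabilized, nor that arbitrarily many remain pairwise distinct, so the construction is left unsupplied. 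The paper closes this gap with a cleaner observation: a strongly irreducible Heegaard surface is automatically both unstabilized and boundary-unstabilized (a boundary-stabilized surface is weakly reducible, since the tube meridian is disjoint from some compression on the opposite side), and the Casson--Gordon/Sedgwick construction cited in \cite{sedgwick:97} furnishes a manifold $M$ with torus boundary carrying strongly irreducible Heegaard surfaces of arbitrarily high genus. Gluing two copies of $M$ by a sufficiently complicated map and amalgamating the genus-$g$ splittings for $g \le G$ then yields surfaces of pairwise distinct genera $2g-1$, so they are trivially non-isotopic and all but at most one are of non-minimal genus --- no appeal to Theorem~\ref{t:HighGenusGordonIsotopy} and no separately arranged non-minimal input is needed. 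Your parenthetical bypass (take the $H_2^k$ of distinct genera) is essentially the paper's idea, but it still rests on input examples you have not constructed; incorporating the strong-irreducibility shortcut and the Casson--Gordon examples would complete your argument.
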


\begin{proof}
Let $M$ denote a 3-manifold with torus boundary, and strongly irreducible Heegaard surfaces of arbitrarily high genus. (Such an example has been constructed by Casson and Gordon. See \cite{sedgwick:97}. The manifold they construct is closed, but there is a solid torus that is a core of one of the handlebodies bounded by each Heegaard surface. Thus, removing this solid torus produces  a manifold with torus boundary that has arbitrarily high genus strongly irreducible Heegaard surfaces.) 

Now let $M_1$ and $M_2$ be two copies of $M$, and let $H_g^i$ denote a genus $g$ strongly irreducible surface in $M_i$. As $H_g^i$ is strongly irreducible, it is neither stabilized nor boundary-stabilized. Hence, if $M_1$ is glued to $M_2$ by a sufficiently complicated homeomorphism, it follows from Theorem \ref{t:HigherGenusGordon} that the amalgamation of $H_g^1$ and $H_g^2$ is unstabilized, for all $g \le G$. (One can make the genus of $G$ as high as desired without changing the genus of $M_1 \cup M_2$ by gluing $M_1$ to $M_2$ by more and more complicated maps.)

Finally, note that every amalgamation is weakly reducible. 
\end{proof}

\section{Low genus Heegaard surfaces are amalgamations}

In this section we establish a refinement of a result due independently to Lackenby \cite{lackenby:04}, Souto \cite{souto}, and Li \cite{li:08}. Their result says that if 3-manifolds $M_1$ and $M_2$ are glued by a sufficiently complicated map, then all low genus Heegaard surfaces in the resulting manifold are amalgamations of Heegaard surfaces in $M_1$ and $M_2$. 

\begin{thm}
\label{t:AmalgamationExists}
Let $M_1$ and $M_2$ be compact, orientable, irreducible 3-manifolds with incompressible boundary, neither of which is an $I$-bundle. Let $M$ denote the manifold obtained by gluing some component $F$ of $\bdy M_1$ to some component of $\bdy M_2$ by some homeomorphism $\phi$. If $\phi$ is sufficiently complicated then any low genus, unstabilized Heegaard surface $H$ in $M$ is an amalgamation of unstabilized, boundary-unstabilized surfaces in $M_1$ and $M_2$, and possibly a Type II splitting of $F \times I$.
\end{thm}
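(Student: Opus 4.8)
The plan is to mimic the structure of the proof of Theorem~\ref{t:HigherGenusGordon}, but starting from a SOG built from a single low genus unstabilized Heegaard surface $H$ rather than from an amalgamation. First I would set $g = \gen(H)$, and observe that since $\phi$ is sufficiently complicated, $F$ is a $g$-barrier surface by Theorem~\ref{t:DistanceBoundTheorem}. The starting point is the GHS $G$ whose unique thick level is $H$ and which has no thin levels. Apply a maximal sequence of weak reductions to $G$ to obtain a strongly irreducible GHS $G'$ of $M$; since $H$ is unstabilized, no destabilization ever occurs in this process, so $\gen(G') = g$ by Lemma~\ref{l:AmalgGenus}, and $\amlg{G'}$ is isotopic to $H$.

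Next I would invoke the barrier property: since $G'$ is a strongly irreducible genus $g$ GHS and $F$ is a $g$-barrier surface, Lemma~\ref{l:FparallelToThinLevelGHS} tells us $F$ is isotopic to a thin level of $G'$. If $F$ is isotopic to a \emph{unique} thin level $F'$ of $G'$, then $F'$ cuts $M$ into pieces $M_1', M_2'$ homeomorphic to $M_1, M_2$, and $G'$ restricts to GHSs $G'(M_i)$ with amalgamations $H_i = \amlg{G'(M_i)}$; by the genus formula (Corollary~\ref{c:GHSgenus}, Lemma~\ref{l:GenusSum}) $H$ is the amalgamation of $H_1$ and $H_2$ across $F$. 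If instead $F$ is isotopic to \emph{two} disjoint thin levels of $G'$, they cobound a product region $P \cong F \times I$, and $G'$ restricts to GHSs on $P$ and on the two complementary pieces homeomorphic to $M_1$ and $M_2$; the thick level in $P$ gives, by \cite{st:93}, a Type II splitting of $F \times I$ (it cannot be a single copy of $F$, since that would have been removed during weak reduction), so $H$ is the amalgamation of $H_1$, $H_2$, and a Type II splitting. One should also check that $F$ cannot be isotopic to three or more disjoint thin levels isotopic to $F$ without forcing extra genus — here the genus count of a product region with a Heegaard surface not separating its boundary ($\geq 2\gen(F)$, from \cite{st:93}) bounds the number of parallel copies.

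It then remains to show the pieces $H_1 = \amlg{G'(M_1)}$ and $H_2 = \amlg{G'(M_2)}$ are unstabilized and boundary-unstabilized. For unstabledness: if $H_i$ were stabilized, then since $H$ is built by amalgamation from $H_1$, $H_2$ (and possibly a Type II piece), $H$ itself would be stabilized — this uses the fact that amalgamating a stabilized splitting yields a stabilized splitting, which follows from the spine construction of Section~\ref{s:Amalgamations} together with Lemma~\ref{l:AmalgGenus}/Definition~\ref{d:destabilization}. For boundary-unstabledness: if $H_i$ were boundary-stabilized along $F$, then amalgamating it with the rest (replacing the $F$-tube structure) again produces a stabilized $H$, exactly as in the final paragraph of the proof of Theorem~\ref{t:HigherGenusGordon} — boundary-stabilizing $H_i$ along $F$ and then amalgamating is the same as amalgamating with an extra Type II splitting of $F \times I$, which exhibits a stabilization; alternatively one folds the $F$-tube into the product region and reduces to the two-thin-level case. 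If $H_i$ is boundary-stabilized along a component of $\bdy M$ other than $F$, the same conclusion holds since that boundary component persists in $M$.

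The main obstacle I expect is the bookkeeping in the last step — precisely formalizing ``an amalgamation of a (boundary-)stabilized Heegaard surface is (boundary-)stabilized.'' The cleanest route is to argue at the level of GHSs and SOGs: if $H_i$ admits a destabilization, then $G'(M_i)$ admits a destabilization (as $H_i = \amlg{G'(M_i)}$ and destabilizations of the amalgamation can be pulled back to weak reductions of the GHS by the converse direction of Lemma~\ref{l:AmalgGenus}), hence so does $G'$, contradicting that $\gen(\amlg{G'}) = g = \gen(H)$ with $H$ unstabilized; the boundary case is handled by appending a product $F \times I$ with a Type II splitting to $G'(M_i)$ and observing this does not change $\amlg{\,\cdot\,}$ up to the boundary-stabilization move, so a $\gamma$-primitive-type obstruction cannot arise. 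Getting the product-region multiplicity bound and the ``unique vs.\ two thin levels'' dichotomy stated cleanly — rather than an unbounded chain of parallel copies of $F$ — is the other place where care is needed, and it is where the genus hypothesis $\gen(H) = g$ is really used.
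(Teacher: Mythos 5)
Your proposal follows the same broad outline as the paper's proof: weakly reduce the trivial GHS of $H$ to a maximally reduced GHS $G'$ with $\amlg{G'}=H$, use the $g$-barrier property to get thin levels isotopic to $F$, cut along them, and identify the restricted amalgamations in the $M_i$ pieces. However, there is a genuine gap in how you propose to control the number of parallel thin levels isotopic to $F$, and a related overcomplication in the boundary-unstabilization step.

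The gap: you suggest the genus hypothesis $\gen(H)=g$ is what ``bounds the number of parallel copies'' and is ``where the genus hypothesis is really used.'' This does not work. By the genus formula (Lemma~\ref{l:GenusSum}), each $F\times I$ slab carrying a Type~II splitting adds $\gen(F)$ to $\gen(H)$; so large $g$ would in principle permit many such slabs, and the genus bound by itself never forces the count down to at most one, which is what the statement requires. What actually rules out two or more $F\times I$ slabs is the \emph{unstabilized} hypothesis, not the genus hypothesis. The paper's proof does not attempt to bound the number of slabs at the cutting stage at all: it cuts along \emph{every} thin level isotopic to $F$, producing $M_1'$, $M_2'$, and however many copies of $F\times I$; peels off any boundary-stabilizations of $G_i=\amlg{H(M_i')}$ as further $F\times I$ slabs with Heegaard surfaces; then amalgamates \emph{all} the $F\times I$ pieces into a single Heegaard surface $H_F$ of $F\times I$. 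By \cite{st:93}, $H_F$ is a stabilization of a Type~I or Type~II splitting; since $H$ is unstabilized, so is $H_F$; since $H_F$ is an amalgamation of nontrivial splittings (any trivial ones were deleted during weak reduction), it cannot be Type~I; so $H_F$ is exactly a Type~II splitting. This is where the ``at most one slab'' conclusion really comes from, and your case split into ``one / two / three or more'' thin levels is unnecessary once you organize the argument this way.

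Two smaller points. First, your remark that a ``$\gamma$-primitive-type obstruction cannot arise'' is off-track: $\gamma$-primitivity plays no role in this direction of the argument; the paper simply defines $H_i$ to be $G_i$ with all boundary-stabilizations along $F$ peeled off, and folds the resulting tubes into the $F\times I$ slabs. Second, your proposed route to ``$H_i$ unstabilized via a converse to Lemma~\ref{l:AmalgGenus}'' is more machinery than needed; the paper simply uses (without fanfare) that an amalgamation containing a stabilized piece is itself stabilized, so $H$ unstabilized forces each $G_i$ unstabilized. Apart from these issues the skeleton matches the paper, but the multiplicity bound as you have stated it would fail.
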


Here the terms ``sufficiently complicated" and ``low genus" mean that the distance of $\phi$ is high enough so that by Theorem \ref{t:DistanceBoundTheorem} the surface $F$ becomes a $g$-barrier surface, where $g=\mbox{genus}(H)$.

\begin{proof}
Let $H_*$ be an unstabilized Heegaard surface in $M$ whose genus is at most $g$. Let $H$ be a GHS obtained from the GHS whose only thick level is $H_*$ by a maximal sequence of weak reductions (Figure \ref{f:AmalgamExists}(b)).  Since $H_*$ was unstabilized, it follows from Lemma \ref{l:AmalgGenus} that $\amlg{H}=H_*$. 

        \begin{figure}[htbp]
        \psfrag{H}{$H_*$}
        \psfrag{F}{$F$}
        \psfrag{a}{$G_1$}
        \psfrag{b}{$G_2$}
        \psfrag{g}{$H_1$}
        \psfrag{G}{$H_2$}
        \psfrag{h}{$H_F$}
        \psfrag{1}{(a)}
        \psfrag{2}{(b)}
        \psfrag{3}{(c)}
        \psfrag{4}{(d)}
        \psfrag{5}{(e)}
        \vspace{0 in}
        \begin{center}
       \includegraphics[width=3.5 in]{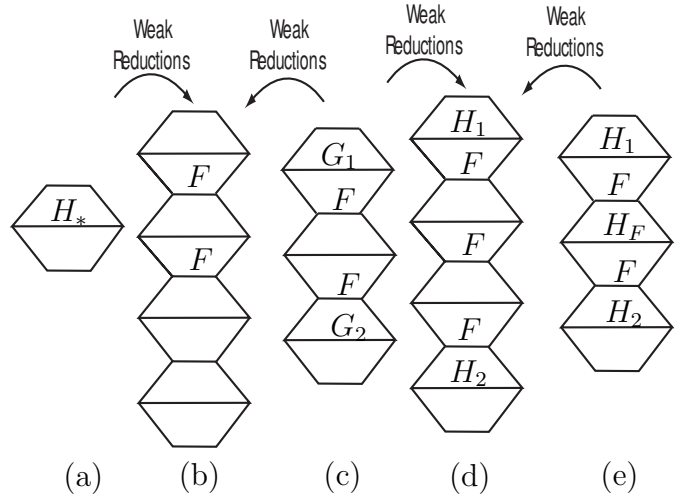}
       \caption{The GHSs of the proof of Theorem \ref{t:AmalgamationExists}.}
        \label{f:AmalgamExists}
        \end{center}
        \end{figure}

By Theorem \ref{t:DistanceBoundTheorem}, $F$ is a $g$-barrier surface. Hence, $F$ is isotopic to at least one thin level of $H$. Now cut $M$ along all thin levels isotopic to $F$. The result is manifolds $M_1'$ and $M_2'$ homeomorphic to $M_1$ and $M_2$, and possibly several manifolds homeomorphic to $F \times I$. The Heegaard surface $H_*=\amlg{H}$ is thus an amalgamation of the surfaces $G_1=\amlg{H(M_1')}$ and $G_2=\amlg{H(M_2')}$, and possibly a Heegaard surface in $F \times I$ (Figure \ref{f:AmalgamExists}(c)).

Since $H_*$ is unstabilized, it follows that both $G_1$ and $G_2$ are unstabilized. Now suppose that $G_i$ is boundary-stabilized. Then $G_i$ is the amalgamation of an unstabilized, boundary-unstabilized Heegaard surface $H_i$ in $M_i'$, and a Heegard surface in $F \times I$. If $G_i$ was boundary-unstabilized to begin with, then let $H_i=G_i$. Thus, $H_*$ is an amalgamation of $H_1$, $H_2$, and possibly multiple Heegaard surfaces in $F \times I$ (Figure \ref{f:AmalgamExists}(d)), which can again be amalgamated to a single Heegaard surface $H_F$ in $F \times I$ (Figure \ref{f:AmalgamExists}(e)). 

By \cite{st:93} $H_F$ is a stabilization of either a copy of $F$ (i.e.~a stabilization of a Type I splitting), or of two copies of $F$ connected by a vertical tube (i.e.~a stabilization of a Type II splitting). However, our assumption that $H_*$ was unstabilized implies $H_F$ is unstabilized. Furthermore, as $H_F$ comes from amalgamating non-trivial splittings of $F \times I$,  it will not be a Type I splitting. We conclude that 
the only possibility is that $H_F$ is a Type II splitting of $F \times I$. 
\end{proof}

\section{Isotopic Heegaard surfaces in amalgamated 3-manifolds.}
\label{s:Isotopy}

In Theorem \ref{t:AmalgamationExists} we showed that when $\phi$ is sufficiently complicated then any low genus, unstabilized Heegaard surface $H$ in $M_1 \cup _\phi M_2$ is an amalgamation of unstabilized, boundary unstabilized surfaces $H_1$ and $H_2$ in $M_1$ and $M_2$, and possibly a Type II splitting of $\bdy M_1 \times I$. In the next theorem we show that when there is no Type II splitting in this decomposition, then $H_1$ and $H_2$ are completely determined by $H$.

\begin{thm}
\label{t:HighGenusGordonIsotopy}
Let $M_1$ and $M_2$ be compact, orientable, irreducible 3-manifolds with incompressible boundary, neither of which is an $I$-bundle. Let $M$ denote the manifold obtained by gluing some component $F$ of $\bdy M_1$ to some component of $\bdy M_2$ by some homeomorphism $\phi$. Suppose $\phi$ is sufficiently complicated, and some low genus Heegaard surface  $H$ in $M$ can be expressed as an amalgamation of unstabilized, boundary-unstabilized surfaces in $M_1$ and $M_2$. Then this expression is unique.
\end{thm}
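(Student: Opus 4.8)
The plan is to show that the thin level isotopic to $F$, together with the two induced Heegaard surfaces $H_1 \subset M_1$ and $H_2 \subset M_2$, is forced upon us by $H$ itself, via the machinery of SOGs that underlies the proof of Theorem \ref{t:HigherGenusGordon}. Suppose $H$ can be written both as the amalgamation of unstabilized, boundary-unstabilized surfaces $H_1, H_2$ and as the amalgamation of such surfaces $H_1', H_2'$. Consider the GHS $\mathbf{G}$ with $\thick{G} = \{H_1, H_2\}$, $\thin{G} = \{F\}$ (suitably oriented), and likewise $\mathbf{G}'$ with $\thick{G'} = \{H_1', H_2'\}$, $\thin{G'} = \{F\}$. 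Both amalgamate to $H$. The idea is to build a SOG connecting $\mathbf{G}$ to $\mathbf{G}'$ that passes through the GHS whose only thick level is $H$: first weakly reduce $\mathbf{G}$ maximally, then go up to the one-thick-level GHS of $H$ (via the reverse of weak reductions, as in Figure \ref{f:InitialSOGgordon}), and then come back down, maximally weakly reducing, to $\mathbf{G}'$. Since $H$ is unstabilized, there are no destabilizations anywhere in this SOG, so its genus equals $\gen(H) = g$.

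Next I would run a maximal sequence of SOG reductions to obtain an irreducible SOG $\mathbf{\Lambda}$ with the same endpoints (the first and last GHS admit no weak reductions — being maximal weak reductions of one-thick-level GHSs — so they are unchanged by SOG reduction, exactly as in the proof of Theorem \ref{t:HigherGenusGordon}). By Theorem \ref{t:DistanceBoundTheorem} and Lemma \ref{l:FparallelToThinLevelSOG}, since $F$ is a $g$-barrier surface and $\gen(\mathbf{\Lambda}) \le g$, the surface $F$ is isotopic to a thin level of every GHS in $\mathbf{\Lambda}$. Now I track the surface $H_1^i = \amlg{\Lambda^i(M_1^i)}$, where $M_1^i$ is the $M_1$-side of a thin level isotopic to $F$ in $\Lambda^i$, as in the proof of Theorem \ref{t:HigherGenusGordon}. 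The key points to establish, reusing that argument almost verbatim: (1) as long as $\Lambda^i$ has a unique thin level isotopic to $F$ and no destabilization has yet occurred, $H_1^i$ is constant (by Lemma \ref{l:AmalgGenus}, since the moves restricted to the $M_1$-side are weak reductions that are not destabilizations); (2) the first destabilization in $\mathbf{\Lambda}$ cannot occur while $F$-thin-levels are unique, because that would force $H_1^i$ or $H_2^i$ to be stabilized (contradicting unstabilizedness of $H_1, H_2$); and (3) when a second thin level isotopic to $F$ is created, the product region $P$ it cuts off carries an unstabilized Heegaard surface $\amlg{\Lambda^i(P)}$ in $F \times I$, which by \cite{st:93} is a copy of $F$ or a Type II splitting — the former being impossible (it would have been removed in weak reduction) and the latter being excluded by the hypothesis that the amalgamation expression of $H$ contains no Type II factor (so it would make $H_1$ boundary-stabilized). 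Hence no second $F$-thin-level is ever created along $\mathbf{\Lambda}$, the $F$-thin-level stays unique throughout, and $H_1^i = H_1$ and $H_2^i = H_2$ for every $i$.

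Applying this to both endpoints: at $\Lambda^1$ we read off $H_1, H_2$, and at $\Lambda^n$ we read off $H_1', H_2'$; but since the $F$-thin-level is unique and the amalgamations on each side are constant along $\mathbf{\Lambda}$, we get $H_1 = H_1'$ and $H_2 = H_2'$ (up to isotopy). This gives uniqueness. The main obstacle I anticipate is item (3): carefully verifying that no second copy of $F$ as a thin level can appear, which requires the analysis of Heegaard splittings of $F \times I$ from \cite{st:93} and the bookkeeping of how weak reductions and destabilizations interact with the $g$-barrier surface — essentially the heart of the argument already carried out for Theorem \ref{t:HigherGenusGordon}, now applied at both ends of the SOG simultaneously. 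A secondary subtlety is ensuring the orientations of the $F$-thin-levels in $\mathbf{G}$ and $\mathbf{G}'$ are compatible (or handling the case where they differ by invoking Lemma \ref{l:LowerBoundTheorem} to derive a genus lower bound contradicting $\gen(\mathbf{\Lambda}) \le g$), so that the two expressions really are being compared as the same decomposition rather than an orientation-reversed one.
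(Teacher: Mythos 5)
Your proposal takes essentially the same route as the paper: build the SOG that goes from the GHS $\{H_1,H_2;F\}$ down through a maximal weak reduction, up to the one-thick-level GHS of $H$, back down through $\{G_1,G_2;F\}$ to another maximal weak reduction; run a maximal sequence of SOG reductions; use the $g$-barrier property of $F$ (via Lemma~\ref{l:FparallelToThinLevelSOG}) to get an $F$-thin-level in every $\Lambda^i$; use the argument of Theorem~\ref{t:HigherGenusGordon} to rule out a second $F$-thin-level; and then apply Lemma~\ref{l:AmalgGenus} to conclude $\amlg{\Lambda^i(M_1^i)}$ is constant along the SOG. The one place where the paper is slightly slicker than your write-up is the elimination of destabilizations: rather than re-running the ``first destabilization must happen after uniqueness fails'' argument, the paper simply observes that every GHS in $\bf\Gamma$ (and hence in $\bf\Lambda$) is obtained from the unstabilized Heegaard surface $H$ by weak reductions, so all of them are unstabilized and no destabilizations can occur at all; your version also works but is a step more circuitous. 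Your closing worry about orientation compatibility is reasonable to flag, but it is not a gap in the paper's argument: once one tracks the transversely oriented $F$-thin-level through $\bf\Lambda$, the identifications of the two sides with $M_1$ and $M_2$ are carried along automatically, and the endpoints of $\bf\Lambda$ are exactly the maximal weak reductions of the two given expressions, forcing $H_1\cong G_1$ and $H_2\cong G_2$ without any separate orientation case analysis.
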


As in Theorem \ref{t:AmalgamationExists}, the terms ``sufficiently complicated" and ``low genus" mean that the distance of $\phi$ is high enough so that by Theorem \ref{t:DistanceBoundTheorem} the surface $F$ becomes a $g$-barrier surface, where $g=\mbox{genus}(H)$.

\begin{proof}
Suppose $H$ can be expressed as an amalgamation of unstabilized, boundary-unstabilized surfaces $H_1$ and $H_2$ in $M_1$ and $M_2$. Suppose also $H$ can be expressed as an amalgamation of unstabilized, boundary-unstabilized surfaces $G_1$ and $G_2$ in $M_1$ and $M_2$. 

Let ${\bf \Gamma}$ be the SOG depicted in Figure \ref{f:InitialSOGisotopy}. The third GHS in the figure is the one whose only thick level is $H$. The second GHS pictured is the GHS whose thick levels are $H_1$ and $H_2$. The first GHS in the figure is obtained from this one by a maximal sequence of weak reductions. The fourth GHS is the one whose thick levels are $G_1$ and $G_2$. Finally, the last GHS is obtained from the fourth by a maximal sequence weak reductions.

        \begin{figure}[htbp]
        \psfrag{X}{$H_2$}
        \psfrag{H}{$H_1$}
        \psfrag{x}{$G_2$}
        \psfrag{h}{$G_1$}
        \psfrag{G}{$H$}
        \psfrag{F}{$F$}
        \vspace{0 in}
        \begin{center}
       \includegraphics[width=3.5 in]{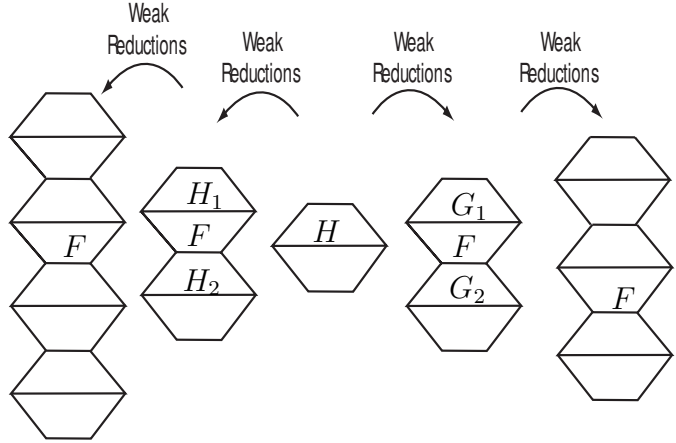}
       \caption{The initial SOG, $\bf \Gamma$.}
        \label{f:InitialSOGisotopy}
        \end{center}
        \end{figure}

Now let ${\bf \Lambda}=\{\Lambda ^i\}_{i=1}^n$ be the SOG obtained from ${\bf \Gamma}$ by a maximal sequence of SOG reductions. When the first and last GHS of a SOG admit no weak reductions, then they remain unaffected by SOG reduction. Hence, $\Lambda^1$ is the first element of ${\bf \Gamma}$ and $\Lambda ^n$ is the last element of ${\bf \Gamma}$.

Note that every GHS of $\bf \Gamma$ is obtained from $H$ by a sequence of weak reductions. By Theorem \ref{t:HigherGenusGordon} the Heegaard surface $H$ is unstabilized, and thus every GHS of $\bf \Gamma$ is unstabilized. Furthermore, every GHS of $\bf \Lambda$ is obtained from GHSs of $\bf \Gamma$ by weak reductions. Hence, every GHS of $\bf \Lambda$ is unstabilized. It follows that there are no destabilizations in $\bf \Lambda$. 

Since $F$ is a $g$-barrier surface, it is isotopic to a thin level of every GHS of $\bf \Lambda$. If, for some $i$, we assume the surface $F$ is isotopic to two elements of $\thin{\Lambda^i}$, then the argument given in the proof of Theorem \ref{t:HigherGenusGordon} provides a contradiction. (This is where we use the assumption that $H_1$ and $H_2$ are not boundary-stabilized.)

We conclude, then, that for each $i$ either $\Lambda^i$ or $\Lambda ^{i+1}$ is obtained from the other by a weak reduction that is not a destabilization. Furthermore, since for all $i$ the surface $F$ is isotopic to a unique thin level of $\Lambda ^i$, it follows that for each $i$,  $M_1(\Lambda^i)=M_1(\Lambda ^{i+1})$, or either $M_1(\Lambda^i)$ or $M_1(\Lambda ^{i+1})$ is obtained from the other by a weak reduction that is not a destabilization. It thus follows from Lemma \ref{l:AmalgGenus} that for each $i$ the surface $\amlg{M_1(\Lambda ^i)}$ is the same (up to isotopy). But $\amlg{M_1(\Lambda ^1)}=H_1$ and $\amlg{M_1(\Lambda ^n)}=G_1$. Hence, $H_1$ is isotopic to $G_1$. A symmetric argument shows $H_2$ must be isotopic to $G_2$, completing the proof. 
\end{proof}

\bibliographystyle{alpha}

\end{document}